\newtheorem{definition}{Definition}[section]
\newtheorem{theorem}[definition]{Theorem}
\newtheorem{corollary}[definition]{Corollary}
\newtheorem{proposition}[definition]{Proposition}
\newtheorem{lemma}[definition]{Lemma}
\newtheorem{remark}[definition]{Remark}
\newtheorem{assumption}{Assumption} 
\numberwithin{equation}{section}
\newcommand{\R}{\mathbb{R}}
\newcommand{\E}{\mathbb{E}}
\newcommand{\N}{\mathbb{N}}
\newcommand{\di}{\mathrm{d}}
\newcommand{\eps}{\varepsilon}
\title[SDE\MakeLowercase{s} with singular coefficients]{SDE\MakeLowercase{s} with singular coefficients: the martingale problem view and the stochastic dynamics view}
\author{Elena Issoglio* and Francesco Russo}
\address[Elena Issoglio]{Dipartimento di Matematica `G.\ Peano', Universit\'a di Torino}
\email[Corresponding author]{elena.issoglio@unito.it}
\address[Francesco Russo]{Unit\'e de Math\'{e}matiques appliqu\'{e}es, ENSTA Paris, Institut Polytechnique de Paris}
\email{francesco.russo@ensta-paris.fr}
\date{13th February 2024.\\
\phantom{xx}*corresponding author}
\begin{document}

\begin{abstract}
We consider SDEs with (distributional) drift in negative Besov spaces and random initial condition and investigate them from two different viewpoints. 
In the first part we set up a martingale problem and  show its well-posedness.
We then prove further properties of the martingale problem, like continuity with respect to the drift and the link with the Fokker-Planck equation. We also show that the solutions are weak Dirichlet processes for which we evaluate the quadratic variation of   the martingale component.
In the second part we identify the dynamics of the solution of the martingale problem
by describing the proper associated SDE.
Under suitable assumptions we show equivalence with the solution to the  martingale problem.
\end{abstract}

\maketitle
{\bf Key words and phrases.}  Stochastic differential equations;
distributional drift; Besov spaces; martingale problem; weak Dirichlet processes.

{\bf 2020 MSC}. 60H10; 60H30; 60H50.

\section{Introduction}
In this paper we  study the  SDE
\begin{equation}\label{eq:SDE}
\mathrm d X_t = b(t, X_t) \mathrm dt + \mathrm d W_t, \quad X_0 \sim \mu,
\end{equation}
where $X_t\in \R^d$,  the process $(W_t)$ is a $d$-dimensional Brownian motion, $\mu$ is any probability measure and the drift $b(t, \cdot)$ is an element of a negative Besov space  $\mathcal C^{(-\beta)+}$, see below for the precise definition.
 SDE \eqref{eq:SDE} is clearly only formal at this stage, because the drift $b$ cannot even  be evaluated at the point $X_t$, and one
first needs to define a notion of solution for this kind of SDEs. 
We tackle this problem from two different viewpoints. In the first part we set up a martingale problem and  show its well-posedness. In the second part we identify the dynamics of the solution of the martingale problem.

The first steps in the study of the SDE in dimension 1
(and with a diffusion coefficient $\sigma$) were done in \cite{frw1, frw2,russo_trutnau07}.
In dimension $d>1$ we mention the work \cite{flandoli_et.al14} where the authors introduced the notion of
\emph{virtual solution}   whose construction  depended a priori on a real parameter  $\lambda$. Also,
the setting was slightly
different because the function spaces were negative fractional  Sobolev spaces $H^{-\beta}_q$
and not Besov spaces. Other authors have studied SDEs with distributional coefficients afterwards, we mention in particular  \cite{diel, cannizzaro, ZhangZhao, athreya2020}. The main idea in all these works, which is the same we also develop in the first part of the present paper, is to frame the SDE as a martingale problem, hence the main goal is to  find  a domain $\mathcal D_{\mathcal L}$ that characterises the martingale solution in terms of the quantity
\begin{equation}\label{eq:mart}
f(t, X_t) -f(0, X_0) - \int_0^t \mathcal Lf (s, X_s) \di s,
\end{equation}
for all $f\in \mathcal D_{\mathcal L}$, where $\mathcal L$ is the {parabolic generator} of $X$ formally given by  $\mathcal L f =\partial_t  f + \tfrac12 \Delta f  +\nabla f \, b$. This is made rigorous  using  results on the PDE 
\begin{equation*}
\left\{
\begin{array}{l}
\mathcal L f  = g\\
 f(T) = f_T,
 \end{array} \right.
\end{equation*}
developed in \cite{issoglio_russoPDEa}.

Our framework in terms of function spaces is slightly different than all the works cited above. In the first part of the article,
the only difference is that we allow the initial condition  $X_0$ to be any random variable, and not only a Dirac delta in a point $x$.
Well-posedeness of the PDE $\mathcal Lf =g$ allows to give a proper meaning to the martingale problem. Various regularity results on the PDE together with a transformation of the solution $X$ into the solution $Y$ of a `standard'
(Stroock-Varadhan) martingale problem (see Section \ref{sc:zvonkin}), allow us to show existence and uniqueness of the solution $X$ to the martingale problem, see Theorem \ref{thm:X0}.
 We also prove other interesting results such as Theorem \ref{thm:McKvFP} where we show that the law density of the solution $X$ satisfies the Fokker-Planck equation, which is  a PDE with negative Besov coefficients. Furthermore we  show in Theorem \ref{thm:tight} some tightness  results for smoothed  solutions  $X^n$ when the negative Besov coefficients are smoothed. 
 
 \vspace{5pt}
The main novelty of this paper is the second part, where
we study the SDE $ X_t = X_0 + \int_0^t b(s, X_s) \di s + W_t $ from a different point of view, in particular we look into the dynamics of the process itself. One natural question to ask, which is well understood in the classical Stroock-Varadhan case where $b$ is a locally bounded function, is the equivalence between the solution to the martingale problem and the solution in law (i.e.\ weak solution) of the SDE. In the case of SDEs with distributional coefficients, the first challenging problem is to define a suitable notion of solution of the SDE, and then to study well-posedness of that equation. 
To this aim, we start in
 Section \ref{sc:weakDir} by showing that the solution to the martingale problem is a weak Dirichlet process, for which we identify the martingale component in its canonical decomposition, see Proposition \ref{pr:Mf} and Remark \ref{rm:A}.
  We then introduce in Section  \ref{sc:generalisedSDE} our notion of solution for the  SDE, involving a `local time' operator which plays the role of the integral $\int_0^t b(s, X_s) \di s$ and involving weak Dirichlet processes.
  Under further mild assumptions on $b$, for example if it has compact support, in Theorem \ref{thm:cr} we show that a solution to the martingale problem is also a solution to the SDE.
  In a  slightly more restricted framework,   in Proposition \ref{pr:Xfqv} we obtain the converse result, hence providing the equivalence result of SDEs and martingale problems for distributional drifts, see Corollary \ref{cor:Xsoliff}. Those results extend
\cite[Propositions 6.7 and 6.10]{russo_trutnau07} stated in dimension 1 and in
  the case of time-homogeneous coefficients.
  
  A typical example of drift $b$ for which all our results are valid, arises when $b$ is a quenched realization of  an independent noise $\dot B_x(\omega)$, which is a generalised random field whose trajectories are the divergence of
  a $(1-\beta)$-H\"older continuous functions $x\mapsto B_x(\omega)$ for some $\beta\in(0,\frac12)$,
  cut with a smooth function with compact support. These models arise when describing the motion of particles propagating in an irregular medium, see \cite{seignourel1998processus} and references therein. 
  The class of these noises
  is large and  in dimension $d=1$ it includes for instance (bi)fractional,
  multi-fractional Brownian ones, etc, with Hurst index greater than $\frac12$, to be cut so that they have compact support.

 A result connected to ours is provided by \cite{chaudru_menozzi}, where the authors study the case when the driving noise is a
 L\'evy $\alpha$-stable process and the distributional drift lives in a general Besov space $\mathbb B^{-\beta}_{p,q}$.
 In particular they formulate the martingale problem and  a quite different notion of SDE (for which, in  $d=1$ they even study pathwise uniqueness, extending in this way \cite[Corollary 5.19]{russo_trutnau07}, stated for Brownian motion) and prove that a solution to the martingale problem is also a solution to their SDE. However, they do not prove the converse result, hence they do not have any equivalence.

\vspace{5pt}

The paper is organised as follows. In Section \ref{sc:prelim} we introduce the framework in which we work, in particular the various functions spaces appearing in the paper and many useful results from the companion paper \cite{issoglio_russoPDEa}.
In Section \ref{sc:zvonkin} we introduce  the  martingale problem and transform it into a classical equivalent Stroock-Varadhan martingale problem. 
In Section \ref{sc:MP} we show existence and uniqueness of a solution to the martingale problem and various other properties. 
In Section \ref{sc:weakDir} we show that the solution to the martingale problem is a weak Dirichlet process and identify its decomposition. 
In Section  \ref{sc:generalisedSDE} we introduce the notion of solution to the SDE and show its equivalence to the martingale problem. 
Finally, in Appendix \ref{app:lunardi} we state a useful result on solutions of (classical) PDEs that we use in the paper.

\section{Setting and preliminary results}\label{sc:prelim} 

\subsection{Function spaces}
Let us denote by $ C^{1,2}_{buc}:= C^{1,2}_{buc}([0,T]\times \R^d)$  the space of all $C^{1,2}$ real functions such that the function and its gradient in $x$ are bounded, and the Hessian matrix and the time-derivative are bounded and uniformly continuous. Let  us denote by $ C^{1,2}_{c}:= C^{1,2}_{c}([0,T]\times \R^d)$ the space of  $   C^{1,2} ([0,T]\times \R^d)$ with compact support.
Let us denote by $C_b^{1,2}:= C^{1,2}_{b}([0,T]\times \R^d) $ the space of $C^{1,2}$-functions that are bounded with bounded derivatives. 
We also use the notation $C^{0,1}:=C^{0,1}([0,T]\times \R^d)$ to indicate the space of real functions with gradient in $x$ uniformly continuous in $(t,x)$.  
Let $C_c^\infty:= C_c^\infty(\R^d)$ denote  the space of all smooth real functions with compact support. 
We denote by $C_c=C_c(\R^d)$ the space of $\R$-valued continuous functions with compact support.
Let $\mathcal S=\mathcal S(\mathbb R^d )$ be the space of real-valued Schwartz functions on $\mathbb R^d$ and $\mathcal S'=\mathcal S'(\mathbb R^d )$ the space of Schwartz distributions. The corresponding dual pairing will be denoted by $\langle \cdot, \cdot \rangle$.

 For $\gamma\in\mathbb R$ we denote by  $\mathcal C^\gamma = \mathcal C^\gamma(\mathbb R^d)$ the Besov space (or H\"older-Zygmund space), endowed with its norm $\|\cdot\|_{\gamma}$.  For more details see 
\cite[Section 2.7, pag 99]{bahouri} and also \cite{issoglio_russoPDEa}, where we recall all useful facts and definitions about these spaces. 
If $\gamma \in \R^+ \setminus \mathbb N$ then the space coincides with the classical H\"older space. If $\gamma<0$ then the space includes some Schwartz distributions. We have $\mathcal C^\gamma \subset \mathcal C^\alpha$ for any $\gamma >\alpha$. Moreover it holds that $L^\infty \subset \mathcal C^0$ (see \cite{iprt24} for a proof in the case of anisotropic Besov spaces). We denote by $C_T \mathcal C^\gamma$ the space of continuous functions on $[0,T]$ taking values in $\mathcal C^\gamma$, that is $C_T \mathcal C^\gamma:= C([0,T]; \mathcal C^\gamma)$.
For any given $\gamma\in \R$ we denote by $\mathcal C^{\gamma+}$ and $\mathcal C^{\gamma-}$  the spaces given by
\[
\mathcal C^{\gamma+}:= \cup_{\alpha >\gamma} \mathcal C^{\alpha} ,  \qquad  
\mathcal C^{\gamma-}:= \cap_{\alpha <\gamma} \mathcal C^{\alpha}.
\]
Note that $\mathcal C^{\gamma+}$ is an inductive space. 
We will also use the spaces $C_T C^{\gamma+}:=C([0,T]; \mathcal C^{\gamma+})$, which is equivalent to the fact that for $f\in C_T C^{\gamma+} $ there exists $\alpha>\gamma $ such that $f\in C_T C^{\alpha}$,  see for example \cite[Appendix B]{issoglio_russoMK}. 
Similarly, we use the space   $C_T C^{\gamma-}:=C([0,T]; \mathcal C^{\gamma-})$, meaning that if $f\in C_T \mathcal C^{\gamma-} $ then for any $\alpha<\gamma $ we have $f\in C_T \mathcal C^{\alpha}$.  
 We denote by $\mathcal C_c^{\gamma}=\mathcal C_c^{\gamma}(\R^d)$ the space of elements in $\mathcal C^{\gamma}$ with compact support. Similarly when $\gamma$ is replaced by $\gamma+$ or $\gamma-$. When defining the domain of the martingale problem  we will work with spaces of functions which are the limit of functions with compact support, so that they are Banach spaces. More precisely, let us denote by $\bar{\mathcal C}_c^\gamma  = \bar{\mathcal C}_c^\gamma (\R^d)$ the space  
\[
\bar{\mathcal C}_c^\gamma := \{f \in {\mathcal C}^\gamma \text{ such that } \exists (f_n) \subset {\mathcal C}_c^\gamma \text{ and } f_n \to f \text{ in }  {\mathcal C}^\gamma\}.
\]
As above we denote the inductive space and intersection space as
\[
\bar {\mathcal C}_c^{\gamma+}:= \cup_{\alpha >\gamma} \bar{\mathcal C}_c^{\alpha} ,  \qquad  
\bar {\mathcal C}_c^{\gamma-}:= \cap_{\alpha <\gamma} \bar{\mathcal C}_c^{\alpha}.
\]
The main reason for introducing this class of subspaces is that  $\bar {\mathcal C}_c^{\gamma+}$ are separable, as proved in  \cite[Lemma 5.7]{issoglio_russoPDEa}, unlike the classical Besov spaces $ C^{\gamma}$ and $ {\mathcal C}^{\gamma+}$ which are not separable. 
Similarly as above, we use the space   $C_T \bar{\mathcal C}_c^{\gamma+}:=C([0,T]; \bar{\mathcal C}_c^{\gamma+})$; in particular we observe that  if $f\in C_T \bar{\mathcal C}_c^{\gamma+} $ then for any $\alpha<\gamma $ we have $f\in C_T \bar{\mathcal C}_c^{\alpha}$ by \cite[Remark B.1, part (ii)]{issoglio_russoMK}.
Moreover in \cite[Corollary 5.8]{issoglio_russoPDEa} we show that  $C_T \bar{\mathcal C}_c^{\gamma+}$ is separable. 
Note that if $f$ is continuous and such that $\nabla f \in C_T \mathcal C^{0+}$ then $f\in C^{0,1}$.

Note that for all function spaces introduced above we use the same notation  to indicate  $\R$-valued functions but also $\R^d$- or $\R^{d\times d}$-valued  functions. It will be clear from the context which space is needed.  When $f:\R^d \to \R^m$ is differentiable, we denote by $\nabla f$ the matrix given by $(\nabla f)_{i,j} = \partial_i f_j$. In particular  when $f: \R^d \to \R$ then $\nabla f$ is a column vector and we denote the Hessian matrix of $f$ by Hess$(f)$.

For $\gamma\in(0,1)$  we define space $D\mathcal C^\gamma $ as
\begin{align*}\label{eq:S}
  D\mathcal C^\gamma:= \{ & h:  \R^d \to \R \text{ differentiable function s.t. } \nabla h \in \mathcal C^\gamma\},
\end{align*}
and by $C_T   D\mathcal C^\gamma:=  C([0,T]; D\mathcal C^\gamma)$.  Note that the following inclusion holds $\mathcal C^{1+\alpha}  \subset D\mathcal C^\alpha.$ 
Analogously as for the $\mathcal C^{\gamma+}$-spaces, for $\gamma>0$ we also introduce the spaces 
$$D \mathcal C^{\gamma+}:= \cup_{\alpha >\gamma} D\mathcal C^{\alpha} ,  \qquad  
D\mathcal C^{\gamma-}:= \cap_{\alpha <\gamma} D\mathcal C^{\alpha}.
$$
We will also use the spaces $C_T D\mathcal  C^{\gamma+}:=C([0,T]; D \mathcal C^{\gamma+})$. For more details on these spaces, see \cite[Section 3]{issoglio_russoPDEa}.


\subsection{Some tools and properties}\label{ssc:pointwise}
The following is an important estimate which allows to define the pointwise product between certain distributions and functions, which is based on Bony's estimates. For details see \cite{bony} or \cite[Section 2.1]{gubinelli_imkeller_perkowski}. Let   $f \in \mathcal C^\alpha$ and $g\in\mathcal C^{-\beta}$ with $\alpha-\beta>0$ and $\alpha,\beta>0$. Then the {`pointwise product'} $ f \, g$ is well-defined as an element of $\mathcal C^{-\beta}$ and  there exists a constant $c>0$ such that 
\begin{equation}\label{eq:bony}
\| f \, g\|_{-\beta} \leq c \| f \|_\alpha \|g\|_{-\beta}.
\end{equation} 
\begin{remark}\label{rm:bonyt}
Using \eqref{eq:bony} it is not difficult to see that  if $f \in C_T \mathcal C^{\alpha}$ and $g \in C_T \mathcal C^{-\beta}$ for $\alpha>\beta>0$ then the product is also continuous with values in $ \mathcal C^{-\beta}$, and 
\begin{equation}\label{eq:bonyt}
\| f \, g\|_{C_T \mathcal C^{-\beta}} \leq c \| f \|_{C_T \mathcal C^{\alpha}} \|g\|_{ C_T \mathcal C^{-\beta}}.
\end{equation} 
\end{remark}

Below we recall some results on a class of PDEs with distributional drift in negative Besov spaces that will be used to set up the martingale problem for the singular SDE \eqref{eq:SDE}. All results are taken from \cite{issoglio_russoPDEa}. In \cite{issoglio_russoPDEa}, as well as in the present work, the main assumption concerning the distribution-valued function $b$ is the following.
\begin{assumption}\label{ass:param-b}
Let $0<\beta<1/2$ and  $b\in C_T \mathcal C^{(-\beta)+}(\R^d)$. In particular $b\in C_T \mathcal C^{-\beta}(\R^d)$. Notice that $b$ is a column vector.
\end{assumption}
We start by the formal definition of the operator $\mathcal L$.
\begin{definition}[Definition 4.3, \cite{issoglio_russoPDEa}]\label{def:L}
Let $b$ satisfy Assumption \ref{ass:param-b}. The operator $\mathcal L$ is defined as  
\begin{equation*}
\begin{array}{lcll}
\mathcal L  :  &\mathcal D_{\mathcal L}^0 &\to &\{\mathcal S'\text{-valued continuous functions}\}\\
& f & \mapsto & \mathcal L f:=   \dot f + \frac12\Delta f + \nabla f \, b,
\end{array}
\end{equation*}
where 
$$\mathcal D_{\mathcal L}^0 : = C_T  D\mathcal C^{\beta} \cap C^1([0,T]; \mathcal S').$$
Here $f: [0,T]\times  \R^d \to \R$ and the function $\dot f:[0,T]\to \mathcal S'$ is the time-derivative of $f$. Note also that $\nabla f \, b := \nabla f \cdot b$ is well-defined  using  \eqref{eq:bony}  and Assumption \ref{ass:param-b} and moreover it is continuous. The Laplacian $\Delta$ is intended in the
sense of distributions.
\end{definition}

Next we recall some results  on certain PDEs, all driven by the operator $\mathcal L$. These results are all proved in the companion paper \cite{issoglio_russoPDEa}.  There are three equations
of interest, all related but slightly different. 
The first PDE is 
\begin{equation}\label{eq:PDE}
\left\{ 
\begin{array}{l}
\mathcal L v =  g
\\
v(T) = v_T.
\end{array}\right. 
\end{equation} 
We know from \cite[Remark 4.8]{issoglio_russoPDEa} that if  $v_T \in \mathcal C^{(1+ \beta)+}$ and $g \in C_T \mathcal C^{(-\beta)+}$ then there exists a unique (weak or mild) solution $v\in C_T \mathcal C^{(1+ \beta)+}$. 
In \cite[Lemma 4.17 and Remark 4.18]{issoglio_russoPDEa} we prove a  continuity result, namely that if the terminal condition $v_T$ in \eqref{eq:PDE} is replaced by a sequence $(v_T^n)$ that converges to $v_T$ in $\mathcal C^{(1+\beta)+}$, the terms $b$ and $g$ are replaced by two sequences $(b^n)$ and $(g^n)$ respectively, both converging in $ C_T \mathcal C^{-\beta}$,  then also the corresponding unique solutions $(v^n)$ will converge to $v$ in    $C_T\mathcal C^{(1+\beta)+}$.

We can solve PDE \eqref{eq:PDE} also under weaker conditions on $v_T$, in particular we allow functions with linear growth. The space that characterises this behaviour is denoted by $D\mathcal C^{\beta}$, which  is the space of differentiable functions whose gradient belongs to $\mathcal C^{\beta}$. 
Notice that in \cite{issoglio_russoPDEa} we introduce two concepts of
solution,  weak and mild, which  are defined  for   functions in $ C_T D\mathcal C^{\beta}$. We prove in \cite[Proposition 4.5]{issoglio_russoPDEa} that the notions of weak and mild solution  of the PDE are equivalent. 
 In \cite[Remark 4.8]{issoglio_russoPDEa} we show that if $v_T \in D\mathcal C^{\beta+}$ then there exists a unique   solution $v\in C_T D \mathcal C^{\beta+}$.  
Continuity results for PDE \eqref{eq:PDE} in the spaces $D\mathcal C^{\beta+}$ also hold, as we prove in  \cite[Remark 4.18 (i)]{issoglio_russoPDEa}, that is if $g^n\to g$ in $C_T \mathcal C^{-\beta} $,   $b^n \to b$ in $C_T \mathcal C^{-\beta} $ and $v^n_T \to v_T$ in $ D \mathcal C^{\beta+} $    then $v^n \to v$ in $C_T D \mathcal C^{\beta+} $. As a special case we show in \cite[Corollary 4.10]{issoglio_russoPDEa} that the function $\text{id}_i(x)= x_i$ solves PDE \eqref{eq:PDE} with $v(T)= x_i$ and $ g= b_i$, that is $\mathcal L \text{id}_i = b_i$.

Let $\lambda>0$. The second PDE to consider is
\begin{equation}\label{eq:PDEphi}
\left\{
\begin{array}{l}
\mathcal L \phi_i = \lambda (\phi_i - \text{id}_i)\\
\phi_i(T) = \text{id}_i,
\end{array}
\right.
\end{equation} 
which has a unique (weak or mild) solution $\phi_i$ for $i=1, \ldots, d$ in the space $C_T D\mathcal C^{(1-\beta)-}$ (uniqueness holds in $C_T D\mathcal C^{\beta}$) by \cite[Theorem 4.7 (i)]{issoglio_russoPDEa}.
In  \cite[Proposition 4.15]{issoglio_russoPDEa} we show that $\phi_i \in \mathcal D^0_{\mathcal L}$ and  $\dot \phi_i \in C_T \mathcal C^{(-\beta)-} $ for all  $i=1, \ldots, d$. We denote by $\phi$ the column vector with components $\phi_i$, $i=1, \ldots, d$.
We show in  \cite[Proposition 4.16]{issoglio_russoPDEa} that there exists $\lambda >0 $ large enough such that $\phi(t, \cdot)$ is invertible for all $t\in[0,T]$, and denoting such inverse with
\begin{equation}\label{eq:psi}
 \psi(t, \cdot):= \phi^{-1}(t, \cdot).
 \end{equation}
In the same proposition we also show that  $\phi, \psi \in C^{0,1}$ and moreover that  $\nabla \phi \in C_T \mathcal C^{(1-\beta)-}$ and $\nabla \psi (t, \cdot) \in \mathcal C^{(1-\beta)-}$ for all $t\in[0,T]$ and $\sup_{t\in[0,T]}\|\nabla \psi (t, \cdot) \|_{\alpha}$ for all $\alpha<1-\beta$.
From now on, let $(b^n)$ be the sequence defined in \cite[Proposition 2.4]{issoglio_russoMK}, so we know that $b^n\to b$ in $C_T\mathcal C^{-\beta}$, $b^n \in C_T \mathcal C^\gamma$ for all $\gamma>0$ and $b^n$ is bounded and Lipschitz.  Here and in the rest of the paper $\lambda>0$ is fixed and independent of $n$, chosen such that  
\begin{equation} \label{eq:lambda}
\lambda= [C(\beta, \varepsilon) \max\{ \sup_n\|b^n\|_{C_T \mathcal C^{-\beta+\eps}},\|b\|_{C_T \mathcal C^{-\beta+\eps}} \}]^{\frac1{1-\theta}},
\end{equation}
according to \cite[Lemma 4.19]{issoglio_russoPDEa}, where $\eps>0$ is such that  $\theta:= \frac{1+2\beta-\eps}{2}<1$ and $ C(\beta, \varepsilon)$ is a constant only depending on $\beta$ and $ \varepsilon$. Notice with this choice of $\lambda$ the corresponding inverse  $\psi^n$ of $\phi^n $, see \eqref{eq:psi} is well-defined  according to \cite[Proposition 4.16 (ii)]{issoglio_russoPDEa}.
In \cite[Lemma 4.19]{issoglio_russoPDEa} we show that  $\phi^n \to \phi$ and $\psi^n \to \psi$ uniformly on $[0,T]\times \R^d$  and $\|\nabla \phi^n \|_\infty + |\phi^n(0,0)|$ is uniformly bounded in $n$.

Finally in \cite[Theorem 4.14]{issoglio_russoPDEa} we show that the function $\phi $ is equivalently defined as $\phi = \text{id} + u$, where $u= (u_1, \ldots, u_d)$ and $u_i$ is the unique solution of the third PDE, that is
\begin{equation}\label{eq:PDEu}
\left\{
\begin{array}{l}
 \mathcal L  u_i = \lambda u_i - b_i\\
u_i(T) = 0
\end{array}
\right.
\end{equation}
in the space $C_T \mathcal C^{(2-\beta)-}$.  For the latter PDE there are also continuity results proven in \cite[Lemma 4.17]{issoglio_russoPDEa}, namely  $u_i^n \to u_i$ in $C_T \mathcal C^{(2-\beta)-}$. Moreover we have uniform convergence of $u^n\to u, \nabla u^n \to \nabla u$ by \cite[Lemma 4.19]{issoglio_russoPDEa}. With $\lambda $ chosen as in \eqref{eq:lambda} we have $\| \nabla u^n\|_{\infty} \leq \frac12$ by \cite[Proposition 4.13 and bound (4.34)]{issoglio_russoPDEa}. 

\subsection{Probabilistic notation}\label{ssc:prob-notation}
In the sequel we will consider generic measurable spaces $(\Omega, \mathcal F)$. On them we will consider various probability measures denoted by $\mathbb P$. We will make use of the notation $(X, \mathbb P)$ or $(Y, \mathbb P)$, where $X$ or $Y$ will denote continuous stochastic processes indexed by $t\in [0,T]$  defined on the probability space $ (\Omega, \mathcal F, \mathbb P)$, without recalling it explicitly. The filtrations considered, if not explicitly mentioned,  will  be the canonical filtrations generated by $X$ or $Y$ (which will be the same in our applications). 

Once the probability space  $ (\Omega, \mathcal F, \mathbb P)$ is fixed, we will denote by $\mathscr C$ the linear space of continuous processes on $[0,T]$ with values in $\R^d$ endowed with the metric of uniform convergence in probability (u.c.p.).

The canonical space of continuous functions from $[0,T]$ with values in $\mathbb R^d$ will be denoted by $\mathcal C_T $, and it will be endowed with  the sigma-algebra of Borel sets  $\mathcal B(\mathcal C_T)$. For $s\in [0,T]$ we will use the notation  $\mathcal C_s $ for the space of continuous functions defined on $[0,s]$. Thus, for a given couple $(X, \mathbb P)$, the law of $X$ under $\mathbb P$ will be a Borel probability measure on the measurable space $(\mathcal C_T, \mathcal B(\mathcal C_T))$.


 \section{A Zvonkin-type transformation}\label{sc:zvonkin}

 In the study of  SDEs with low-regularity coefficients, like \eqref{eq:SDE}, one successful idea is to apply a bijective transformation that  changes the singular drift and produces a transformed SDE whose drift has no singular component and which can thus be  solved with standard techniques.  The idea goes back to Zvonkin \cite{z},
 and in the present case a transformation that does the job is the unique solution $\phi$ of the PDE \eqref{eq:PDEphi}. The analysis that we do here can shed some light on what kind of   transformations, aside from $\phi$, of the martingale problem fulfilled by $X$ will lead to different, but equivalent, transformed martingale problems   fulfilled by a new process $Y$.

 \vspace{10pt}

Let us start by introducing  a class of function, denoted by $\mathcal D_{\mathcal L} $, that is the domain of the martingale problem 

\begin{equation}\label{eq:D}
\begin{array}{ll}
\mathcal D_{\mathcal L} : = &  \{ f \in C_T\mathcal C^{(1+\beta)+}:   \exists g \in  C_T\bar {\mathcal C}_c^{0+}  \text{ such that } \\
                            &   f \text{ is a weak solution of }
                              \mathcal  L f =g  \text{ and } f(T) \in \bar{\mathcal C}_c^{(1+\beta)+  }\},
\end{array} 
\end{equation}
where  $\mathcal L$ has been defined in Definition \ref{def:L}.

\begin{definition}\label{def:MP}
We say that a couple  $(X, \mathbb P)$ where $X$ is a continuous process indexed by $t\in[0,T]$ and $\mathbb P$ is a probability on some measurable space,  is a {\em solution to the martingale problem with distributional drift $b$ and initial condition $\mu$} (for shortness, solution of MP with distributional drift $b$ and i.c.\ $\mu$) if and only if for every $f \in \mathcal D_{\mathcal L}$
\begin{equation}\label{eq:MP}
f(t, X_t) - f(0, X_0) - \int_0^t (\mathcal L f) (s, X_s) \di s
\end{equation}
is a local martingale under $\mathbb P$, and $X_0 \sim \mu$ under $\mathbb P$, where the domain  $\mathcal D_{\mathcal L} $ is given by
\eqref{eq:D} 
and  $\mathcal L$ has been defined in  Definition \ref{def:L}.

We say that the martingale problem with distributional drift $b$ {\em admits uniqueness} if for any two solutions $(X^1, \mathbb P^1)$ and $(X^2, \mathbb P^2)$ with $X^i_0\sim \mu$, $i=1,2$, then the law of $X^1$ under $\mathbb P^1$  is the same as the law of $X^2$ under $\mathbb P^2$.
\end{definition}

 \begin{remark}\label{rm:D}
Since $ \bar{\mathcal C}_c^{(1+\beta)+  } \subset \bar{\mathcal C}_c^{0+  } \subset {\mathcal C}^{(-\beta)+  }$, then there exists a unique weak solution  $f\in C_T\mathcal C^{(1+\beta)+}$ for the PDE appearing in $\mathcal D_{\mathcal L}$, see Section \ref{ssc:pointwise}.
Moreover by \cite[Remark 4.4]{issoglio_russoPDEa} we have $ \mathcal D_{\mathcal L} \subset \mathcal D_{\mathcal L}^0$.
\end{remark}

\begin{proposition}\label{pr:DLsep}
The domain $\mathcal D_{\mathcal L}$ defined in \eqref{eq:D} equipped with its graph topology is separable. 
\end{proposition}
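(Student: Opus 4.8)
The plan is to realize $\mathcal{D}_{\mathcal{L}}$, equipped with its graph topology, as a continuous image of a separable topological space; since the continuous image of a separable space is separable (the image of a countable dense set is countable and dense), this is enough. The separable space I would use is the product
$$E:=\bar{\mathcal{C}}_c^{(1+\beta)+}\times C_T\bar{\mathcal{C}}_c^{0+},$$
which is separable because $\bar{\mathcal{C}}_c^{(1+\beta)+}$ is separable by \cite[Lemma 5.7]{issoglio_russoPDEa}, $C_T\bar{\mathcal{C}}_c^{0+}$ is separable by \cite[Corollary 5.8]{issoglio_russoPDEa}, and a finite product of separable spaces is separable.

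Next I would introduce the solution map $S\colon E\to\mathcal{D}_{\mathcal{L}}$ sending $(f_T,g)$ to the unique weak solution $f\in C_T\mathcal{C}^{(1+\beta)+}$ of PDE \eqref{eq:PDE} with terminal datum $f_T$ and right-hand side $g$. This is well defined: since $f_T\in\bar{\mathcal{C}}_c^{(1+\beta)+}\subset\mathcal{C}^{(1+\beta)+}$ and $g\in C_T\bar{\mathcal{C}}_c^{0+}\subset C_T\mathcal{C}^{(-\beta)+}$, existence and uniqueness of $f$ follow from Remark \ref{rm:D}, and $f\in\mathcal{D}_{\mathcal{L}}$ by the very definition \eqref{eq:D} (with witnesses $g$ and $f(T)=f_T$). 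The map $S$ is moreover onto: given any $f\in\mathcal{D}_{\mathcal{L}}$, by definition there is $g\in C_T\bar{\mathcal{C}}_c^{0+}$ with $\mathcal{L}f=g$ and $f(T)\in\bar{\mathcal{C}}_c^{(1+\beta)+}$, and then $f=S(f(T),g)$ by uniqueness for \eqref{eq:PDE}.

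It then remains to check that $S$ is continuous for the graph topology on $\mathcal{D}_{\mathcal{L}}$, i.e. the topology induced by $f\mapsto(f,\mathcal{L}f)$ into $C_T\mathcal{C}^{(1+\beta)+}\times C_T\bar{\mathcal{C}}_c^{0+}$. The composition of $S$ with $f\mapsto\mathcal{L}f$ is the coordinate projection $(f_T,g)\mapsto g$, hence continuous. For the composition of $S$ with $f\mapsto f$ into $C_T\mathcal{C}^{(1+\beta)+}$ I would invoke the continuity result for PDE \eqref{eq:PDE} recalled after its statement (based on \cite[Lemma 4.17 and Remark 4.18]{issoglio_russoPDEa}): with the drift frozen equal to $b$, if $f_T^n\to f_T$ in $\mathcal{C}^{(1+\beta)+}$ and $g^n\to g$ in $C_T\mathcal{C}^{-\beta}$, then $f^n\to f$ in $C_T\mathcal{C}^{(1+\beta)+}$. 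To feed the hypotheses available in $E$ into this statement one only needs the continuous inclusions $\bar{\mathcal{C}}_c^{(1+\beta)+}\hookrightarrow\mathcal{C}^{(1+\beta)+}$ and $C_T\bar{\mathcal{C}}_c^{0+}\hookrightarrow C_T\mathcal{C}^{-\beta}$, which follow by passing to a single step of the inductive limits and using $\mathcal{C}^{\alpha}\hookrightarrow\mathcal{C}^{\alpha'}$ for $\alpha\ge\alpha'$ (cf. \cite[Appendix B]{issoglio_russoMK}). Thus $S$ is a continuous surjection from the separable space $E$ onto $\mathcal{D}_{\mathcal{L}}$, and separability of $\mathcal{D}_{\mathcal{L}}$ follows.

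I expect the only delicate point to be the bookkeeping with the inductive-limit topologies: one must make sure that ``separable'' and ``continuous image of a separable space is separable'' are applied correctly for these non-metrizable spaces (they are, since both notions only concern the existence of a countable dense set and its image under a continuous map), and that the two inclusion claims above hold at the level of the inductive-limit topologies and not merely set-theoretically. All the analytic content — existence, uniqueness and stability for $\mathcal{L}f=g$ — is already available from \cite{issoglio_russoPDEa}, so no new PDE estimate is needed.
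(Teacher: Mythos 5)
Your proof is correct and follows essentially the same approach as the paper: both parametrize elements of $\mathcal{D}_{\mathcal{L}}$ by the pair (terminal datum, right-hand side) in the separable space $\bar{\mathcal{C}}_c^{(1+\beta)+}\times C_T\bar{\mathcal{C}}_c^{0+}$ and rely on the continuity of the PDE solution map recalled from \cite[Lemma 4.17 and Remark 4.18]{issoglio_russoPDEa}. Your framing as a continuous surjection from a separable space, and the paper's direct construction of a countable dense set as the image of countable dense sets in each factor, are two wordings of the identical argument.
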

\begin{proof}
  By  \cite[Lemma 5.7 (i)]{issoglio_russoPDEa}
  with $\gamma=0$ we know that $ \bar{\mathcal C}_c^{0+}$ is separable, hence   there exists a dense subset $D_0$ of $ \bar{\mathcal C}_c^{0+}$, and by  \cite[Corollary 5.8]{issoglio_russoPDEa} we know that $ C_T\bar{\mathcal C}_c^{\beta+}$ is separable, thus there  exists a dense subset $D_\beta$ of $ C_T\bar{\mathcal C}_c^{\beta+}$. 
Let us denote by  $D$ the set of all $ f_n\in C_T \mathcal C^{(1+\beta)+} $ such that $\mathcal L f_n =g_n; f_n(T) = f_n^T$ where $g_n \in D_0 $ and $f_n^T \in D_\beta$. 
Clearly $D$ is countable, because $D_0$ and $D_{\beta}$ are countable  and $D\subset \mathcal D_{\mathcal L}$. Moreover  by continuity results on the PDE \eqref{eq:PDE}, see Section \ref{ssc:pointwise}, we have that if $f_n^T\to f(T)$ in $\mathcal C^{(1+\beta)+}$ and $g_n \to g$ in $C_T\mathcal C^{0+}$, then $f_n \to f$ in $C_T\mathcal C^{(1+\beta)+}$, which proves that  the set $D$ is dense in $\mathcal D_{\mathcal L}$.
\end{proof}

Next we introduce the transformed  SDE studied here, which is
\begin{align}\label{eq:SDEY}
Y_t=& Y_0 + \lambda \int_0^t Y_s \di s - \lambda\int_0^t \psi(s, Y_s) \di s + \int_0^t \nabla\phi(s, \psi(s, Y_s) )\di W_s,
\end{align}
where $\phi$ is the unique solution  of \eqref{eq:PDEphi} and $\psi$ is its (space-)inverse given by \eqref{eq:psi}
with $\lambda>0$  chosen large enough (see Section \ref{ssc:pointwise}). 
Notice that this SDE is formally obtained by applying the transformation $\phi$ to $X$ as in Definition \ref{def:MP}, that is, setting $Y_t =\phi(t,X_t) $ and using that $\phi$ is invertible with inverse $\psi$.

Denoting by $Y$ the solution of \eqref{eq:SDEY}, by It\^o's formula for all $\tilde f\in C_{buc}^{1,2}([0,T]\times \R^d)$  we know that 
\[
\tilde f( t, Y_t) - \tilde f(0, Y_0) - \int_0^t ( \tilde{\mathcal L}\tilde f) (s, Y_s) \di s
\]
is a martingale under $\mathbb P$. Here the operator $\tilde {\mathcal L }$ is the generator of $Y$, which is  defined by
\begin{align}\label{eq:Ltilde}
\tilde{\mathcal L} \tilde f := \partial_t \tilde f + \lambda \nabla \tilde f (\text{id} - \psi )  +  \frac12 \text{Tr}[(\nabla \phi \circ \psi)^\top  \text{Hess} \tilde f (\nabla \phi \circ \psi )] .
\end{align}
In particular,  $(Y, \mathbb P)$ verifies the classical Stroock-Varadhan martingale problem with respect to $\tilde {\mathcal L}$.
We recall that this notion is equivalent
to the one of weak solution for SDEs, see \cite[Proposition 4.11 in Chapter 5]{karatzasShreve}. To avoid confusion with the notion of weak solution for PDEs, in this paper we use the terminology \emph{solution in law} instead of weak solution when referring to SDEs.

\begin{remark}\label{rm:tildeL}
Note that the coefficients in $\tilde{\mathcal  L}$ belong to $ C^{0,\nu}$ for any $\nu<1-\beta$, see Appendix \ref{app:lunardi} for the definition of $ C^{0,\nu}$.   Indeed $\tilde f\in C^{1,2}_{buc}$, $\psi $ has linear growth since $|\nabla \psi|$ is uniformly bounded and  the  coefficient is $\nabla \phi \circ \psi$ belongs to $ C^{0,\nu}$ for any $\nu<1-\beta$ because 
\begin{align*}
\|\nabla \phi (t, \psi(t, \cdot))\|_\nu \leq & \sup_{x} | \nabla \phi (t, \psi(t, x))|
 +\sup_{  x_1, x_2, \, x_1\neq x_2 }  \frac{| \nabla \phi (t, \psi(t, x_2)) -\nabla \phi (t, \psi(t, x_2))| }{|x_1-x_2|^\nu}\\
\leq & \sup_{t\in[0,T]}  \|\nabla \phi(t, \cdot)\|_\infty\\
 &+ \sup_{t\in[0,T]}  \sup_{  x_1, x_2, \, x_1\neq x_2 }  \frac{| \nabla \phi (t, \psi(t, x_2)) -\nabla \phi (t, \psi(t, x_2))| }{|\psi(t, x_1) - \psi(t, x_2) |^\nu} \frac{|\psi(t, x_1) - \psi(t, x_2) |^\nu }{|x_1-x_2|^\nu} \\
\leq&  \sup_{t\in[0,T]}  \|\nabla \phi(t, \cdot)\|_\infty +   \|\nabla \phi\|_{C_T \mathcal C^\nu} \|\nabla \psi \|_\infty.
\end{align*}
Here we have also used Remark \ref{rm:x1x2}.
\end{remark}

It will be useful later on to consider a  domain for the operator $\tilde{\mathcal L}$ obtained as the image of ${\mathcal D}_{\mathcal L}$ through $\phi$. Let us define 
\begin{equation}
\tilde {\mathcal D}_{\tilde {\mathcal L}} := \{\tilde f = f\circ \psi \text{ for some } f \in \mathcal D_{\mathcal L} \text{ and } \psi \text{ defined in  \eqref{eq:psi}}
\}.\label{eq:Dtilde}
\end{equation}
The choice of the SDE \eqref{eq:SDEY} and of the  domain $\tilde {\mathcal D}_{\tilde{\mathcal L}}$ are natural since we use  the transformed process $Y_t = \phi(t, X_t)$.

\begin{lemma}\label{lm:gh}
Let $g,h: \R^d \to \R^d$   with  $h\in C^1$ with $\nabla h \in C^{\beta+}$ and  $g\in  \mathcal C^{(1+\beta)+}$. Then $ g \circ h \in \mathcal C^{(1+\beta)+}$. If moreover $g_n\to g$ in $  \mathcal C^{(1+\beta)+}$ then $g_n \circ h \to g\circ h $ in $\mathcal C^{(1+\beta)+}$.
 \end{lemma}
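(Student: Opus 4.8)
The plan is to reduce the statement about the composition $g\circ h$ to a Hölder-type estimate, exploiting that $\mathcal C^{(1+\beta)+}$ is an inductive space. First I would unpack the hypothesis: since $g\in\mathcal C^{(1+\beta)+}=\cup_{\alpha>1+\beta}\mathcal C^\alpha$, there is some $\alpha>1+\beta$ with $g\in\mathcal C^\alpha$; likewise $\nabla h\in\mathcal C^{\beta+}$ gives some $\delta>\beta$ with $\nabla h\in\mathcal C^\delta$. Shrinking if necessary I can assume $1<\alpha<2$ and $0<\delta<1$, so that $\mathcal C^\alpha$ and $\mathcal C^\delta$ are classical Hölder spaces: $g\in C^1$ with $\nabla g\in C^{\alpha-1}$, and $\nabla h\in C^\delta$. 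The target regularity will be $\alpha':=1+\min\{\alpha-1,\delta\}>1+\beta$, and the claim becomes $g\circ h\in C^{1}$ with $\nabla(g\circ h)\in C^{\alpha'-1}$, i.e.\ $g\circ h\in\mathcal C^{\alpha'}\subset\mathcal C^{(1+\beta)+}$.

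The core computation is straightforward: by the chain rule $\nabla(g\circ h)(x)=(\nabla h(x))^\top\,(\nabla g)(h(x))$, a product of two bounded, Hölder-continuous-enough maps. Boundedness of $\nabla(g\circ h)$ follows from $\|\nabla h\|_\infty<\infty$ (since $\nabla h\in C^\delta$ with $\delta\le1$, hence bounded on $\R^d$ as an element of a Besov space) and $\|\nabla g\|_\infty<\infty$. For the Hölder seminorm I would split
\[
\nabla(g\circ h)(x)-\nabla(g\circ h)(y)
=\big(\nabla h(x)-\nabla h(y)\big)^\top(\nabla g)(h(x))
+(\nabla h(y))^\top\big((\nabla g)(h(x))-(\nabla g)(h(y))\big).
\]
The first term is controlled by $[\nabla h]_\delta\,|x-y|^\delta\,\|\nabla g\|_\infty$. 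For the second, using that $h$ is Lipschitz (again because $\nabla h$ is bounded) so $|h(x)-h(y)|\le\mathrm{Lip}(h)|x-y|$, I get $\|\nabla h\|_\infty\,[\nabla g]_{\alpha-1}\,\mathrm{Lip}(h)^{\alpha-1}\,|x-y|^{\alpha-1}$. Combining, $[\nabla(g\circ h)]_{\alpha'-1}<\infty$ with $\alpha'-1=\min\{\alpha-1,\delta\}$, where on a bounded domain (or using that everything is bounded) the smaller exponent dominates. This gives $g\circ h\in\mathcal C^{(1+\beta)+}$.

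For the continuity statement, suppose $g_n\to g$ in $\mathcal C^{(1+\beta)+}$; by definition of convergence in the inductive space (cf.\ the discussion of $C_T\mathcal C^{\gamma+}$ in Section~\ref{sc:prelim}) there is a single $\alpha>1+\beta$ with $g_n,g\in\mathcal C^\alpha$ and $g_n\to g$ in $\mathcal C^\alpha$. Applying the estimate above to the difference $g_n-g$ in place of $g$ — note the chain rule is linear in $g$, and $h$ is fixed — yields
\[
\|g_n\circ h-g\circ h\|_{\mathcal C^{\alpha'}}\le C(h)\,\|g_n-g\|_{\mathcal C^\alpha}\to0,
\]
with $C(h)$ depending only on $\|\nabla h\|_\infty$, $[\nabla h]_\delta$, $\mathrm{Lip}(h)$, and also bounding $\|g_n\circ h-g\circ h\|_\infty\le\|\nabla(g_n-g)\|_\infty\cdot(\text{const})$ plus the value at a point. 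Since $\alpha'>1+\beta$, convergence in $\mathcal C^{\alpha'}$ implies convergence in $\mathcal C^{(1+\beta)+}$.

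The only mildly delicate point — the main obstacle — is bookkeeping the exponents and making sure the composition estimate is uniform enough to give the quantitative bound $\|g\circ h\|_{\mathcal C^{\alpha'}}\lesssim\|g\|_{\mathcal C^\alpha}$ with constant depending only on $h$; in particular one must be slightly careful that the Besov-norm of a composition $\nabla g\circ h$ of a $C^{\alpha-1}$ function with a Lipschitz map is bounded by $[\nabla g]_{\alpha-1}\mathrm{Lip}(h)^{\alpha-1}$, and that passing from seminorms to full norms $\|\cdot\|_{\mathcal C^{\gamma}}$ (including the sup part) is handled. This is classical — it is the standard fact that Hölder spaces are stable under composition with Lipschitz maps — but since the paper works with the Besov/Hölder–Zygmund scale one should either invoke the identification with classical Hölder spaces for the non-integer, non-negative exponents involved (as recalled in Section~\ref{sc:prelim}) or cite the corresponding paraproduct estimate; I would take the former, more elementary route.
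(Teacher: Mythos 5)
Your proof is correct and follows essentially the same route as the paper's: apply the chain rule, observe that $\nabla(g\circ h)$ is a product of bounded factors, and bound its H\"older seminorm by splitting the difference and using that $h$ is Lipschitz (because $\nabla h$ is bounded) to control $[\nabla g\circ h]_{\alpha-1}$, with the convergence claim following from the linearity of these estimates in $g$. One small simplification: the sup-norm part of the convergence is handled directly by $\|g_n\circ h - g\circ h\|_\infty \le \|g_n - g\|_\infty$ (composition never increases the sup norm), so there is no need for the detour through $\|\nabla(g_n-g)\|_\infty$ plus a pointwise value, which in fact would not give a global bound on $\R^d$.
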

 \begin{proof}
To prove that  $ g \circ h \in \mathcal C^{(1+\beta)+}$ is equivalent to prove that $ \bar f:= g(h(\cdot))$ is bounded and that there exists $\alpha>\beta$ such that $\nabla \bar f \in \mathcal C^\alpha$, i.e.\ $\nabla \bar f $ is bounded and $\alpha$-H\"older continuous.
 The first claim is obvious by boundedness of $ g $. The gradient $\nabla \bar f ( \cdot)  = \nabla h(\cdot) \nabla  g (h(\cdot))$ is bounded because it is the product of two bounded matrices since $\nabla h, \nabla g$ are bounded by assumption on $g, h$. 
To show that $\nabla \bar f$ is $\alpha$-H\"older continuous it is enough to show that it is the product of two functions in $\mathcal C^\alpha$ (note that boundedness of the factors is crucially used). We have $\nabla h\in \mathcal C^\alpha$ for some $\alpha>\beta$ by assumption. On the other hand it is immediate to  show that the term $\nabla  g (h( \cdot)) $ is in $\mathcal C^\alpha$, because it is bounded, and  $\alpha$-H\"older continuity is proved using that $\nabla g\in \mathcal C^\alpha$ and $h$ is Lipschitz because  by assumption $\nabla h$ is bounded.

To show convergence, let us denote $\bar f_n := g_n \circ h$. Since $\bar f_n(0)\to \bar f(0)$, it is enough to show the convergence of $\nabla \bar f_n $ in $\mathcal C^{\alpha}$. We use the same properties as above to get
$
\|\nabla \bar f_n - \nabla \bar f\|_{\alpha}  
 \leq  \|\nabla h\|_\infty  \|g_n -g\|_\alpha +   \|\nabla g_n -\nabla g\|_\infty \|h \|_\alpha,
$
and the proof is complete.
 \end{proof}

\begin{lemma}\label{lm:tildef} 
If $\tilde f\in C^{1,2}_{buc}  $  and $\phi $ is the unique solution to PDE \eqref{eq:PDEphi} then $ \tilde f \circ \phi \in C_T \mathcal C^{(1+\beta)+}$.
\end{lemma}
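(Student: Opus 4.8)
The plan is to reduce the claim about $\tilde f \circ \phi$ to Lemma~\ref{lm:gh} by playing the roles of $g$ and $h$ appropriately. The subtlety is that $\tilde f$ is only $C^{1,2}$ (not in a Besov space of order $>1+\beta$ a priori), while $\phi$ has low spatial regularity, so one cannot directly apply Lemma~\ref{lm:gh} with $g = \tilde f$. The trick is to work at the level of the $t$-frozen spatial functions and write $\tilde f(t, \phi(t, \cdot)) = \tilde f(t, \cdot) \circ \phi(t, \cdot)$ for each fixed $t$, then apply Lemma~\ref{lm:gh} with $g = \tilde f(t, \cdot)$ and $h = \phi(t,\cdot)$, and finally check continuity in $t$ with values in $\mathcal C^{(1+\beta)+}$.

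First I would verify the hypotheses of Lemma~\ref{lm:gh}. Since $\tilde f \in C^{1,2}_{buc}$, for each fixed $t$ the function $\tilde f(t,\cdot)$ is $C^2$ with bounded gradient and bounded, uniformly continuous Hessian; in particular $\tilde f(t,\cdot) \in C^{1,1} \subset \mathcal C^{2} \subset \mathcal C^{(1+\beta)+}$ (here boundedness of $\tilde f$ itself is not required by Lemma~\ref{lm:gh}, only of the gradient, but in any case one can localise since composition is a local operation and the range of $\phi(t,\cdot)$ covers $\R^d$; alternatively note $\tilde f$ bounded is in the definition of $C^{1,2}_{buc}$). For the inner function, recall from \cite[Theorem 4.14]{issoglio_russoPDEa} that $\phi = \mathrm{id} + u$ with $u_i \in C_T \mathcal C^{(2-\beta)-}$, hence $\phi(t,\cdot) \in C^1$ with $\nabla \phi(t,\cdot) = I + \nabla u(t,\cdot) \in \mathcal C^{(1-\beta)-} \subset \mathcal C^{\beta+}$ (using $\beta < 1/2 < 1-\beta$). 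So Lemma~\ref{lm:gh} applies with $h = \phi(t,\cdot)$ and $g = \tilde f(t,\cdot)$, giving $\tilde f(t, \phi(t,\cdot)) \in \mathcal C^{(1+\beta)+}$ for each $t$, with a uniform choice of the exponent $\alpha > \beta$ (namely any $\alpha < 1-\beta$) since the regularity of $\nabla \phi$ is uniform in $t$ by the bounds recalled in Section~\ref{ssc:pointwise}.

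Next I would establish continuity in $t$. Write $F(t) := \tilde f(t, \phi(t,\cdot))$ and estimate $\|F(t) - F(s)\|_{\mathcal C^{1+\alpha}}$ by splitting $\tilde f(t,\phi(t,\cdot)) - \tilde f(s,\phi(s,\cdot)) = [\tilde f(t,\phi(t,\cdot)) - \tilde f(s,\phi(t,\cdot))] + [\tilde f(s,\phi(t,\cdot)) - \tilde f(s,\phi(s,\cdot))]$. For the first bracket, $\tilde f(t,\cdot) - \tilde f(s,\cdot) \to 0$ as $t\to s$ in a suitable $C^{1,1}$-type norm (uniform continuity of $\partial_t \tilde f$ and $\mathrm{Hess}\,\tilde f$, plus the fundamental theorem of calculus in $t$ for $\tilde f$ and $\nabla \tilde f$), and then the convergence statement in Lemma~\ref{lm:gh} (applied with fixed inner function $h = \phi(t,\cdot)$, whose relevant norms are uniformly bounded in $t$) transfers this to convergence in $\mathcal C^{1+\alpha}$ after composition. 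For the second bracket, use that $\phi \in C_T D\mathcal C^{\beta+}$ (indeed $C_T \mathcal C^{(1-\beta)-}$ regularity of $\nabla\phi$), so $\phi(t,\cdot) \to \phi(s,\cdot)$ in the relevant norm, and combine this with the smoothness of $\tilde f(s,\cdot)$ via a chain-rule / Taylor estimate: $\nabla[\tilde f(s,\cdot)\circ\phi(t,\cdot)] - \nabla[\tilde f(s,\cdot)\circ\phi(s,\cdot)] = \nabla\phi(t,\cdot)\,[\nabla\tilde f(s,\cdot)\circ\phi(t,\cdot)] - \nabla\phi(s,\cdot)\,[\nabla\tilde f(s,\cdot)\circ\phi(s,\cdot)]$, and control each factor in $\mathcal C^\alpha$ using Bony's product estimate \eqref{eq:bony}, boundedness of $\nabla\tilde f, \mathrm{Hess}\,\tilde f$, and the $\mathcal C^{\alpha}$-continuity of $x \mapsto \nabla\tilde f(s,x)$ precomposed with the Lipschitz maps $\phi(t,\cdot), \phi(s,\cdot)$.

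The main obstacle I expect is the second bracket: one is composing a merely $C^{1,1}$ outer function $\tilde f(s,\cdot)$ with two nearby inner functions and asking for closeness of the \emph{gradients} in the Hölder norm $\mathcal C^\alpha$, which forces one to control $\nabla\tilde f(s,\phi(t,\cdot)) - \nabla\tilde f(s,\phi(s,\cdot))$ in $\mathcal C^\alpha$; since $\nabla\tilde f(s,\cdot)$ is only Lipschitz (from boundedness of the Hessian), not better, one gets a bound like $\|\nabla\tilde f(s,\cdot)\|_{C^{0,1}} \cdot \|\phi(t,\cdot)-\phi(s,\cdot)\|_{\mathcal C^\alpha}^{?}$ and must be careful that the resulting exponent still yields convergence — this is a routine but slightly delicate interpolation, handled by choosing $\alpha$ strictly below $1-\beta$ so there is room to spare. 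Everything else is bookkeeping with \eqref{eq:bony}, \eqref{eq:bonyt} and Lemma~\ref{lm:gh}.
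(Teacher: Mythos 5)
Your first step (pointwise membership $f(t,\cdot)\in\mathcal C^{(1+\beta)+}$ via Lemma \ref{lm:gh} with $g=\tilde f(t,\cdot)$, $h=\phi(t,\cdot)$) is exactly the paper's, with one small note: the paper's proof contains a typo and writes $h=\psi(t,\cdot)$, but you have the correct inner function $\phi(t,\cdot)$ since $f=\tilde f\circ\phi$. Where you genuinely diverge is in the continuity-in-$t$ step. You split the increment $F(t)-F(s)$ into the two brackets $\tilde f(t,\phi(t,\cdot))-\tilde f(s,\phi(t,\cdot))$ and $\tilde f(s,\phi(t,\cdot))-\tilde f(s,\phi(s,\cdot))$, and handle the second bracket via an interpolation argument that you flag as ``routine but slightly delicate.'' The paper instead introduces $H:=\nabla\tilde f\circ\phi$, writes $\nabla f = H\,\nabla\phi$, and controls the $\alpha$-seminorm of $H(t)-H(s)$ \emph{without} splitting time and space increments and without interpolation: it observes that $H$ is actually $C^1$ in $x$ with $\nabla H=(\mathrm{Hess}\,\tilde f\circ\phi)\,\nabla\phi$ bounded and uniformly continuous, and then the fundamental theorem of calculus yields
\[
\frac{|H(t,x_1)-H(t,x_2)-(H(s,x_1)-H(s,x_2))|}{|x_1-x_2|^\alpha} \le \omega_{\nabla H}(|t-s|)\,|x_1-x_2|^{1-\alpha},
\]
which is $\le\omega_{\nabla H}(|t-s|)$ for $|x_1-x_2|<1$. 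This is cleaner because it trades the interpolation in Hölder exponents for the observation that one full extra derivative of $H$ is available.

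Your interpolation step is genuinely left open, but it can be closed: writing $g:=\nabla\tilde f(s,\cdot)$ (Lipschitz with constant $L$) and $\phi_j$ for $\phi(t,\cdot),\phi(s,\cdot)$ (Lipschitz with constant $K$), the $\alpha$-seminorm of $g\circ\phi_1-g\circ\phi_2$ is bounded both by $2L\|\phi_1-\phi_2\|_\infty\,|x-y|^{-\alpha}$ and by $2LK\,|x-y|^{1-\alpha}$; taking the geometric mean with weight $\theta=1-\alpha$ on the first gives a bound $C\,\|\phi_1-\phi_2\|_\infty^{1-\alpha}$, which tends to $0$ as $t\to s$ since $\phi(t,\cdot)-\phi(s,\cdot)=u(t,\cdot)-u(s,\cdot)$ with $u\in C_T\mathcal C^{1+\alpha}$. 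Note that this interpolation works for any $\alpha\in(0,1)$; the restriction $\alpha<1-\beta$ in your conclusion is actually imposed elsewhere, by the need for $\nabla\phi\in C_T\mathcal C^\alpha$ in the product $\nabla f=H\,\nabla\phi$, not by the interpolation itself. With that step made explicit, your proof is sound and constitutes a valid alternative to the paper's argument.
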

\begin{proof} Let us set $ f :=  \tilde f \circ \phi $. 
We first  prove that $f(t) \in \mathcal C^{(1+\beta)+}$ for all $t\in[0,T]$. This is a consequence of Lemma \ref{lm:gh} with $g= \tilde f (t,\cdot)$  and $h=\psi (t,\cdot)$. The hypothesis on $g$ are satisfied since $g\in C^{1,2}_{buc} $ and hence $g(t)\in \mathcal C^{(1+\gamma)+}$ for any $\gamma\in(0,1)$. The hypothesis on $h$ are satisfied since $\nabla h\in \mathcal C^{(1-\beta)-}$ implies $\nabla h\in \mathcal C^{\beta+}$.

For the (uniform) time-continuity with values in $ \mathcal C^{(1+\beta)+}$, since $\beta$ is not an integer,  we have to control \begin{equation}\label{eq:3terms}
\|f(t)-f(s)\|_\infty + \|\nabla f(t)-\nabla f(s)\|_\alpha
\end{equation}
for some $\alpha>\beta$ and  for small $|t-s|$, where we recall $f(t) = \tilde f(t, \phi(t, \cdot))$, having used the equivalent norm
\cite[(2.3)]{issoglio_russoPDEa}. 
The first term in \eqref{eq:3terms} is obvious from the fact that $\tilde f\in C^{1,2}_{buc}$ and  
\begin{equation}\label{eq:phiu}
\phi(t, x) -\phi(s,x) = u(t, x) -u(s,x), \, \text{ where } u \in C_T \mathcal C^{1+\alpha},
\end{equation}  
see Section \ref{ssc:pointwise}.

For the second term in \eqref{eq:3terms}, setting $H:= \nabla \tilde f \circ \phi$, we can write 
$\nabla f = H \nabla \phi$.
We note that $\nabla \phi \in C_T\mathcal C^\alpha$, see  Section \ref{ssc:pointwise}, and since $H \in C_T\mathcal C^\alpha$ (proved below) then the product is also in $C_T\mathcal C^\alpha$ and the proof is concluded.

It remains to show that $H \in C_T\mathcal C^\alpha$. For the sup part of the norm (see \cite[(2.2)]{issoglio_russoPDEa}) we notice that
\begin{align}\label{eq:HH}
 \nonumber
 \|H(t) - H(s)\|_\infty & \leq   \|\nabla \tilde f(t, \phi(t, \cdot)) - \nabla \tilde f(t, \phi(s, \cdot)) \|_\infty +  \|\nabla \tilde f(t, \phi(s, \cdot)) - \nabla \tilde f(s, \phi(s, \cdot)) \|_\infty\\
 &\leq  \|\text{Hess}(\tilde f) \|_{\infty}\|\phi(t, \cdot) -\phi(s, \cdot)\|_\infty +  \|\nabla \tilde f(t, \phi(s, \cdot)) - \nabla \tilde f(s, \phi(s, \cdot)) \|_\infty,
\end{align}
and the first term is bounded  as above using \eqref{eq:phiu}, while the second term is  controlled
because $\tilde f \in C^{1,2}_{buc}$.

We observe that $H\in C^{0,1}$ and $\nabla H= (\text{Hess}(\tilde f)\circ \phi ) \nabla \phi$. We will use below that $\nabla H$ is uniformly continuous, which we see by showing that each term of the product is bounded and uniformly continuous ({\it buc}). $\nabla \phi$ is {\it buc} because $\nabla\phi \in C_T \mathcal C^{\alpha}$. The term $\text{Hess}(\tilde f)\circ \phi$ is similar to \eqref{eq:HH} but using that Hess$(\tilde f)$ is {\it buc} and  \eqref{eq:phiu}.

Concerning the $\alpha$-seminorm (see \cite[(2.2)]{issoglio_russoPDEa}),
for $x_1, x_2$ such that $|x_1 -x_2|<1$ we have 
\begin{align*}
& \frac{|H(t, x_1) - H(t, x_2) - \left ( H(s, x_1) - H(s, x_2) \right) |}{|x_1-x_2|^\alpha} \\
\leq & 
\int_0^1 \left|\nabla H(t, x_1 + a (x_2-x_1)) -  \nabla H(s, x_1 + a (x_2-x_1)) \right| \di a   |x_2 - x_1|^{1-\alpha}   \\
 \leq &  \omega_{\nabla H}( |t-s|) ,
\end{align*}
where $\omega_{\nabla H} (\cdot)$ denotes the continuity modulus of $ \nabla H$. This concludes the control of the second term in \eqref{eq:3terms}.
\end{proof}

\begin{lemma}\label{lm:LL}
Let $ \tilde f\in C^{1,2}_{buc}$ and $\phi$ be the unique solution of PDE \eqref{eq:PDEphi}. Setting $ f  :=  \tilde f \circ \phi $ we have $f\in \mathcal D_{\mathcal L}^0$ and 
\begin{equation*}
(\tilde {\mathcal L} \tilde f ) \circ \phi=  {\mathcal L}  f 
\end{equation*}
in $C_T \mathcal C^{0+}$, that is $f$ is a solution of $\mathcal L f =g$ with $g:=(\tilde {\mathcal L} \tilde f ) \circ \phi \in C_T \mathcal C^{0+}$. Equivalently, we have $\tilde {\mathcal L} \tilde f = (  {\mathcal L}  f) \circ \psi $, where $\psi$ is the space-inverse of $\phi$ defined in \eqref{eq:psi}.

If moreover $\tilde f$ has compact support, then $f(T)$ and $g$ also have compact support, in which case $f\in \mathcal D_{\mathcal L} $.
\end{lemma}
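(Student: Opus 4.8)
The plan is to show first that $f:=\tilde f\circ\phi$ lies in $\mathcal D_{\mathcal L}^0$, then to compute $\mathcal L f$ by a chain rule and recognise it as $(\tilde{\mathcal L}\tilde f)\circ\phi$ after inserting the PDE $\mathcal L\phi_i=\lambda(\phi_i-\text{id}_i)$, and finally to deduce the compact support statement from the fact that $\phi(t,\cdot)$ is a proper map. Lemma \ref{lm:tildef} already gives $f\in C_T\mathcal C^{(1+\beta)+}\subset C_T D\mathcal C^{\beta}$. For the remaining condition $f\in C^1([0,T];\mathcal S')$ I would write $\phi=\text{id}+u$ with $u_i$ the solution of \eqref{eq:PDEu}, recall from Section \ref{ssc:pointwise} that $\phi_i\in\mathcal D_{\mathcal L}^0$ — so $\phi_i\in C^1([0,T];\mathcal S')$ and $\dot\phi_i\in C_T\mathcal C^{(-\beta)-}$ — and differentiate $f(t,x)=\tilde f(t,\phi(t,x))$ in time to get
\[
\dot f = (\partial_t\tilde f)\circ\phi + \sum_i(\partial_i\tilde f\circ\phi)\,\dot\phi_i,
\]
where each product $(\partial_i\tilde f\circ\phi)\,\dot\phi_i$ is well defined in $\mathcal C^{(-\beta)-}$ via \eqref{eq:bony}, using that $\partial_i\tilde f\circ\phi\in C_T\mathcal C^{\alpha}$ for some $\alpha>\beta$ (as established inside the proof of Lemma \ref{lm:tildef}); thus $\dot f\in C_T\mathcal C^{(-\beta)-}\subset C_T\mathcal S'$ and $f\in\mathcal D_{\mathcal L}^0$.

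For the identification I would apply the spatial chain rule to $f=\tilde f\circ\phi$; the only genuinely distributional ingredients are $\Delta\phi_i=\Delta u_i$ and $\dot\phi_i=\dot u_i$, which lie in $\mathcal C^{(-\beta)-}$, so that all the products below make sense by \eqref{eq:bony} precisely because $\beta<1/2$. One obtains $\nabla f=(\nabla\phi)(\nabla\tilde f\circ\phi)$ and
\[
\tfrac12\Delta f = \tfrac12\sum_i(\Delta\phi_i)(\partial_i\tilde f\circ\phi) + \tfrac12\,\text{Tr}\big[(\nabla\phi)^{\top}(\text{Hess}\,\tilde f\circ\phi)\,\nabla\phi\big].
\]
Summing $\dot f+\tfrac12\Delta f+\nabla f\,b$ and collecting the coefficient multiplying each $(\partial_i\tilde f\circ\phi)$, which is exactly $\dot\phi_i+\tfrac12\Delta\phi_i+\nabla\phi_i\,b=\mathcal L\phi_i$, yields
\[
\mathcal L f = (\partial_t\tilde f)\circ\phi + \tfrac12\,\text{Tr}\big[(\nabla\phi)^{\top}(\text{Hess}\,\tilde f\circ\phi)\,\nabla\phi\big] + \sum_i(\partial_i\tilde f\circ\phi)\,\mathcal L\phi_i.
\]
Inserting $\mathcal L\phi_i=\lambda(\phi_i-\text{id}_i)$ from \eqref{eq:PDEphi} and comparing with the definition \eqref{eq:Ltilde} of $\tilde{\mathcal L}$ composed with $\phi$ — using $\psi\circ\phi=\text{id}$ and $(\nabla\phi\circ\psi)\circ\phi=\nabla\phi$ — the right-hand side is precisely $(\tilde{\mathcal L}\tilde f)\circ\phi$. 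That $g:=(\tilde{\mathcal L}\tilde f)\circ\phi\in C_T\mathcal C^{0+}$ is then read off from Remark \ref{rm:tildeL} (the coefficients of $\tilde{\mathcal L}$ lie in $C^{0,\nu}$ for every $\nu<1-\beta$), the regularity of $\tilde f$, and the composition estimate of Lemma \ref{lm:gh} with $h=\psi$ (composing on the right by $\phi$ preserves this since $\phi\in C^{0,1}$); together with Remark \ref{rm:D} this establishes $f\in\mathcal D_{\mathcal L}^0$ with $\mathcal L f=g$, and composing with $\psi$ gives the equivalent form $\tilde{\mathcal L}\tilde f=(\mathcal L f)\circ\psi$.

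For the last assertion, if $\tilde f\in C^{1,2}_c$ then $\partial_t\tilde f$, $\nabla\tilde f$ and $\text{Hess}\,\tilde f$ are all supported in $\text{supp}\,\tilde f$, so $\tilde{\mathcal L}\tilde f$ has compact support; since $\phi=\text{id}+u$ with $u$ bounded, the map $\phi(t,\cdot)$ is proper, hence $g=(\tilde{\mathcal L}\tilde f)\circ\phi$ and $f(T)=\tilde f(T,\phi(T,\cdot))$ are compactly supported. Combined with the previous paragraph this gives $g\in C_T\bar{\mathcal C}_c^{0+}$ and $f(T)\in\bar{\mathcal C}_c^{(1+\beta)+}$, so $f\in\mathcal D_{\mathcal L}$ by \eqref{eq:D}.

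The step I expect to be the main obstacle is making the chain rule for $\Delta f$ and $\dot f$ rigorous when $\Delta\phi_i$ and $\dot\phi_i$ are only elements of $\mathcal C^{(-\beta)-}$: the cleanest route is to carry out the whole computation first for the smooth regularisations $b^n$, $\phi^n$ of Section \ref{ssc:pointwise}, where $\phi^n$ is smooth enough for the classical chain rule and the identity with the corresponding operators holds in the ordinary sense, and then to pass to the limit using $\phi^n\to\phi$, $\nabla\phi^n\to\nabla\phi$ uniformly together with the PDE continuity results recalled in Section \ref{ssc:pointwise}.
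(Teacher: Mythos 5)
Your proposal follows essentially the same route as the paper: establish $f\in\mathcal D^0_{\mathcal L}$ via Lemma \ref{lm:tildef} and the time-derivative computation with $\dot\phi_i\in C_T\mathcal C^{(-\beta)-}$, compute $\mathcal L f$ by the chain rule interpreted through Bony products, substitute $\mathcal L\phi_i=\lambda(\phi_i-\text{id}_i)$, and match against $(\tilde{\mathcal L}\tilde f)\circ\phi$; the compact-support step is also the same idea, though you argue properness of $\phi(t,\cdot)$ from $u$ bounded while the paper uses $\nabla\psi$ bounded. The regularisation detour you flag at the end is not needed: the paper makes the chain rule rigorous directly, noting that $\partial_i f$ is an honest bounded function whose weak derivative involves only bounded products and one Bony product of $(\partial_k\tilde f\circ\phi)\in\mathcal C^{\beta+}$ with $\partial_{ii}\phi_k\in\mathcal C^{(-\beta)-}$.
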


\begin{proof}
We start by proving that  $f\in {\mathcal D}_{\mathcal L}^0$ so that we can then calculate $\mathcal L f$.
Notice that $f\in C_T \mathcal C^{(1+\beta)+}$ by  Lemma \ref{lm:tildef}. To show that  $f\in C^1([0,T], \mathcal S')$ we compute the time-derivative $\dot f $. 
Recall that  $\tilde f \in C^{1,2}_{buc}$ by assumption, and that $\phi:[0,T]\times \R^d \to \R^d$ and $f, \tilde f: [0,T]\times \R^d \to \R$. We have
\begin{equation}\label{eq:fdot}
t \mapsto \dot f (t, \cdot) = \dot{  \tilde f }(t, \phi(t, \cdot)) + \sum_{k=1}^d \partial_k \tilde f(t, \phi(t, \cdot)) \dot \phi_k (t, \cdot)  ,
\end{equation}
where the dot $\dot{\ }$ denotes  the time-derivative  and $\partial_k:= \frac{\partial }{ \partial x_k}$. 
We show that  the right-hand side of equation \eqref{eq:fdot} is  in $C_T\mathcal S'$. 
For the first term in  \eqref{eq:fdot} clearly we have the claim because   $ \dot{\tilde{f}} \circ \phi$ is uniformly continuous  in $t,x$. 
The second term in  \eqref{eq:fdot}
has products of the form  $(\partial_k \tilde f \circ \phi ) \dot \phi_k$ where $\dot \phi_k \in C_T \mathcal C^{(-\beta)-}$, see Section \ref{ssc:pointwise} and $\partial_k \tilde f \circ \phi \in C_T \mathcal C^{\beta+}$. Hence the product is well-defined and continuous by  \eqref{eq:bonyt}.
This  shows that $f\in C^1([0,T]; \mathcal S')$ and hence $f\in {\mathcal D}_{\mathcal L}^0$. 

 We now apply $\mathcal L$ to $f$ so we need to calculate the spatial derivatives of $f$. 
The first space derivative of $f$ with respect to $x_i$ is
\begin{align*}
\partial_i f (t, \cdot) = \sum_{k=1}^d  \partial_k  \tilde f (t, \phi(t, \cdot)) \partial_i \phi_k (t, \cdot) , t\in[0,T]
\end{align*}
and the second derivative is 
\begin{align*}
\partial_{ii} f (t, \cdot) &=  \sum_{k=1}^d\left [ \sum_{l=1}^d ( \partial_{lk}  \tilde f (t, \phi(t, \cdot)) \partial_i \phi_l (t, \cdot) ) \partial_i \phi_k (t, \cdot)  + \partial_k \tilde f (t, \phi(t, \cdot))  \partial_{ii} \phi_k(t, \cdot) \right]\\
&=   \left ((\nabla \phi)^T (\text{Hess}(\tilde f)\circ \phi)  \nabla \phi  \right)_{ii}(t, \cdot)+ \sum_{k=1}^d  \partial_k \tilde f (t, \phi(t, \cdot))  \partial_{ii} \phi_k(t, \cdot) , t\in[0,T].
\end{align*}
Note that $\partial_i f (t, \cdot)$ for all $ t\in[0,T]$  is a well-defined object in $\mathcal C^{(-\beta)-}$ because it is actually a function in $ \mathcal C^{\beta+}$ by  Lemma \ref{lm:tildef}.
The second derivative   $\partial_{ii} f (t, \cdot) $ is made of two terms: the first one  is  a bounded function,  and the second one is well-defined in $\mathcal C^{(-\beta)-}$ again by means of the pointwise product \eqref{eq:bony}, where for all  $ t\in[0,T]$  the distributional term $\partial_{ii}\phi_k (t, \cdot)$ is in $\mathcal C^{ (-\beta)-}$  since $\partial_{i}\phi_k (t, \cdot) \in \mathcal C^{ (1-\beta)-} $, see Section \ref{ssc:pointwise}. Using these we calculate $\mathcal Lf$  :
\begin{align}\label{eq:Lf} 
\nonumber
(\mathcal L f) (t, \cdot)
=& \dot  {\tilde f} (t, \phi(t, \cdot)) + \frac12 \sum_{i=1}^d \left ( (\nabla \phi)^T (\text{Hess}(\tilde f)\circ \phi) \nabla \phi \right)_{ii}(t,\cdot)\\
&+ \sum_{k=1}^d  \partial_k \tilde f (t, \phi(t, \cdot)) \left[ \dot \phi_k (t, \cdot) + \frac12  \sum_{i=1}^d\partial_{ii} \phi_k(t, \cdot) +\partial_i \phi_k (t, \cdot)  b_i(t, \cdot) \right], t\in [0,T],
\end{align}
where the last term $\partial_k \tilde f (t, \phi(t, \cdot)) \partial_i \phi_k (t, \cdot)  b_i(t, \cdot)$ is well-defined in $\mathcal C^{(-\beta)+}$  by \eqref{eq:bony}  used twice. Thus  equality \eqref{eq:Lf} holds  in the space $\mathcal C^{(-\beta)-}$.
Now we observe that $\mathcal L \phi_k   = \lambda (\phi_k    -\text{id}_k)$ 
because $\phi_k$ is  solution of PDE \eqref{eq:PDEphi}, see Section \ref{ssc:pointwise}.
 Thus the equality above becomes
\begin{align}\label{eq:Lf2}
\nonumber
(\mathcal L f) (t, \cdot)
=& \dot  {\tilde f} (t, \phi(t, \cdot)) + \frac12 \sum_{i=1}^d \left ((\nabla \phi)^T (\text{Hess}(\tilde f)\circ \phi)  \nabla \phi \right)_{ii}(t, \cdot)\\ \nonumber
&+ \sum_{k=1}^d  \partial_k \tilde f (t, \phi(t, \cdot)) \lambda (\phi_k (t, \cdot)-\text{id}_k)\\ \nonumber
=& \dot  {\tilde f} (t, \phi(t, \cdot)) + \frac12 \text{Tr} \left (  (\nabla \phi)^T (\text{Hess}(\tilde f)\circ \phi) \nabla \phi\right)(t, \cdot)\\
& + \lambda  \nabla \tilde f (t, \phi(t, \cdot))  (\phi(t, \cdot)-\text{id}) , t \in [0,T].
\end{align}
 On the other hand,  by direct definition \eqref{eq:Ltilde} of $\tilde {\mathcal L}$ applied to $\tilde f \in C^{1,2}_{buc}$ and then composed with $\phi$ and using $\psi(t, \phi(t, \cdot) = \text{id}$,  one easily gets 
\begin{align}\label{eq:tLf}
\nonumber
(\tilde {\mathcal L} \tilde f )(t, \phi(t, \cdot))
=& \dot{  \tilde f} (t, \phi(t, \cdot)) + \frac12 \text{Tr} \left ( (\nabla \phi)^T (\text{Hess}(\tilde f)\circ \phi)  \nabla \phi \right)(t, \cdot)\\
& + \lambda \nabla \tilde f (t, \phi(t, \cdot))  (\phi(t, \cdot)-\text{id}), t\in[0,T].
\end{align}
Now using \eqref{eq:Lf2} and \eqref{eq:tLf} we get $t\mapsto (\mathcal L f)=(\tilde {\mathcal L} \tilde f )(t, \phi(t, \cdot)) $ in $C([0,T];\mathcal S')$. 
We observe that the right-hand side of \eqref{eq:tLf} belongs to $C_T \mathcal C^{0+}$. 
Setting $g:=(\tilde {\mathcal L} \tilde f )\circ \phi$ we can conclude that  $\mathcal L f =g \in C_T \mathcal C^{0+}$. Given that both sides are functions, we can compose them with $\psi $ to get $\tilde {\mathcal L} \tilde f=  ({\mathcal L}  f  ) \circ \psi $.

Finally we show  that  if $\tilde f$ has compact support, then $g=(\tilde{\mathcal  L} \tilde f ) \circ \phi $ also has compact support. First notice that $ \tilde {\mathcal  L} \tilde f $ has compact support, thus there exists $M>0$ such that for all $(t, x)$ with $|(t, \phi(t, x))|>M$ then $g(t,x)=0$. To show that $g$ has compact support it is enough to find $N>0$ such that if $|(t,x)|> N$, 
then $|(t,\phi(t,x))|>M$. This is equivalent to showing that 
\[
A:= \{(t,x): |(t,\phi(t,x))|\leq M\} \subset \{(t,x): |(t,x)|\leq N \}=:B,
\] for some $N$. To show the above inclusion, let $(t,x)\in A$. 
We write $(t, x) = (t, \psi(t, \phi(t,x)))$ and using that $\nabla \psi$ is uniformly bounded, see Section \ref{ssc:pointwise}, we get
\begin{align*}
|(t,x)|& = |(t,\psi(t, \phi(t,x)) - \psi(t, 0) +\psi(t, 0))|\\
&\leq C |(t,\phi(t,x))| +|(t, \psi(t, 0))|\\
&\leq C M +\sup_{t\in [0,T]}| (t,\psi(t, 0))| =: N,
\end{align*}
which shows that $(t,x)\in B$. 
We conclude by  noting that  $f(T, \cdot)$ also has compact support, following  the above computations but fixing the time $t=T$ and replacing $\tilde {\mathcal L} \tilde f $ with $\tilde f$.
\end{proof}

\begin{lemma} \label{lm:DL}
 We have $C^{1,2}_{c} \subset {\tilde {\mathcal D}}_{\tilde{\mathcal L}}$. 
\end{lemma}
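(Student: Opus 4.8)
The plan is to show that any $\tilde f \in C^{1,2}_c$ is of the form $f\circ\psi$ for a suitable $f \in \mathcal D_{\mathcal L}$, which is exactly the defining condition \eqref{eq:Dtilde} for membership in $\tilde{\mathcal D}_{\tilde{\mathcal L}}$. The natural candidate is $f := \tilde f\circ\phi$: since $\psi(t,\cdot) = \phi^{-1}(t,\cdot)$, i.e.\ $\phi(t,\psi(t,x)) = x$ for all $(t,x)$ by \eqref{eq:psi}, one has $(f\circ\psi)(t,x) = \tilde f(t,\phi(t,\psi(t,x))) = \tilde f(t,x)$, so indeed $\tilde f = f\circ\psi$. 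It therefore remains only to check that $f = \tilde f\circ\phi$ belongs to $\mathcal D_{\mathcal L}$.

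For this I would first note that $C^{1,2}_c \subset C^{1,2}_{buc}$: a compactly supported $C^{1,2}$ function is bounded with bounded gradient, and its Hessian and time-derivative are bounded, while all of these are automatically uniformly continuous because of the compact support. Hence Lemma \ref{lm:LL} applies to $\tilde f$; and since $\tilde f$ additionally has compact support, the last assertion of that lemma gives precisely $f = \tilde f\circ\phi \in \mathcal D_{\mathcal L}$. Combining this with the identity $\tilde f = f\circ\psi$ established above, we conclude $\tilde f \in \tilde{\mathcal D}_{\tilde{\mathcal L}}$, and since $\tilde f \in C^{1,2}_c$ was arbitrary, $C^{1,2}_c \subset \tilde{\mathcal D}_{\tilde{\mathcal L}}$.

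The statement is essentially a corollary of Lemma \ref{lm:LL}, so there is no genuine obstacle. The only (routine) points to be mildly careful about are the inclusion $C^{1,2}_c \subset C^{1,2}_{buc}$ and — inside the cited lemma — the identification of the compactly supported solution data $f(T)$ and $g = (\tilde{\mathcal L}\tilde f)\circ\phi$ as elements of $\bar{\mathcal C}_c^{(1+\beta)+}$ and $C_T\bar{\mathcal C}_c^{0+}$ respectively, which holds because a compactly supported element of $\mathcal C^{\gamma}$ lies trivially in $\mathcal C_c^{\gamma} \subset \bar{\mathcal C}_c^{\gamma}$ (take the constant approximating sequence), and because $\bar{\mathcal C}_c^{\gamma}$ carries the subspace topology of $\mathcal C^{\gamma}$, so continuity in time in $\mathcal C^{\gamma}$ with values in $\bar{\mathcal C}_c^{\gamma}$ is continuity in $\bar{\mathcal C}_c^{\gamma}$.
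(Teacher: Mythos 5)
Your proof is correct and follows essentially the same route as the paper: set $f:=\tilde f\circ\phi$, observe $\tilde f=f\circ\psi$, verify $C^{1,2}_c\subset C^{1,2}_{buc}$, and invoke the last assertion of Lemma~\ref{lm:LL} (together with Lemma~\ref{lm:tildef}) to conclude $f\in\mathcal D_{\mathcal L}$. The only difference is that you spell out more explicitly the inclusion $C^{1,2}_c\subset C^{1,2}_{buc}$ and the fact that compactly supported elements of $\mathcal C^\gamma$ lie in $\bar{\mathcal C}_c^\gamma$, which the paper leaves implicit inside Lemma~\ref{lm:LL}.
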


\begin{proof}
By Definition of  ${\tilde {\mathcal D}}_{\tilde{\mathcal L}}$ we have  to show that if $\tilde f\in C^{1,2}_{c}$ then $ f := \tilde f \circ \phi \in \mathcal D_{\mathcal L}$, where $ \mathcal D_{\mathcal L}$ is given in \eqref{eq:D}. First we note that by Lemma \ref{lm:tildef} we have $f \in C_T\mathcal C^{(1+\beta)+}$. 
Next we show that $\mathcal Lf =g $ for some $g\in C_T\bar{\mathcal C}_c^{0+}$. We define $g:= \tilde{\mathcal L} \tilde f \circ \phi$. By Lemma \ref{lm:LL} we have $\mathcal L f =g $ and since $\tilde f$ has   compact support, then  $f\in \mathcal D_{\mathcal L}$ by Lemma \ref{lm:LL} again. 
\end{proof}

We can finally state the main result of this section, namely the equivalence between the original martingale problem  and the Zvonkin-transformed martingale problem.

\begin{theorem}\label{thm:mart}
Let Assumption \ref{ass:param-b} hold. 
\begin{itemize}
\item[(i)] If $(X, \mathbb P)$ is a solution to  MP with distributional drift $b$ and i.c.\ $\mu$ then $(Y, \mathbb P) $ is a solution in law  to \eqref{eq:SDEY},   where $Y_t := \phi(t, X_t)$ and  $Y_0\sim \nu$, where  $\nu$ is the pushforward measure of $\mu$ given by
$\nu:= \mu (\psi(0, \cdot))$. 
\item[(ii)] If $(Y, \mathbb P) $ is a solution in law  to \eqref{eq:SDEY} with $Y_0\sim \nu$ then $(X, \mathbb P)$  is a solution to MP with distributional drift $b$ and i.c. $\mu$, where $X_t := \psi(t, Y_t)$ and $\mu$ is the pushforward measure of $\nu$ given by $\mu:= \nu (\phi(0, \cdot))$.
\end{itemize}
\end{theorem}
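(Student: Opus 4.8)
The strategy is to exploit the correspondence $Y_t = \phi(t,X_t)$, $X_t = \psi(t,Y_t)$ together with the two characterizations of the respective martingale problems: the nonstandard one (via $\mathcal L$ on $\mathcal D_{\mathcal L}$) for $X$, and the classical Stroock--Varadhan one (via $\tilde{\mathcal L}$) for $Y$. The key algebraic fact connecting the two is already supplied by Lemma \ref{lm:LL}: for $\tilde f \in C^{1,2}_{buc}$, setting $f = \tilde f \circ \phi \in \mathcal D_{\mathcal L}$ (when $\tilde f$ has compact support) one has $(\tilde{\mathcal L}\tilde f)\circ\phi = \mathcal L f$, equivalently $\tilde{\mathcal L}\tilde f = (\mathcal L f)\circ\psi$. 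The whole proof is then a matter of substituting $x \mapsto \phi(t,x)$ or $x\mapsto\psi(t,y)$ inside the defining martingale expressions and checking that the class of test functions matches on both sides.

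\emph{Part (i).} Assume $(X,\mathbb P)$ solves MP with drift $b$, i.c.\ $\mu$. To show $(Y,\mathbb P)$ with $Y_t=\phi(t,X_t)$ solves \eqref{eq:SDEY} in law, I will verify that $(Y,\mathbb P)$ solves the classical martingale problem for $\tilde{\mathcal L}$ on the domain $C^{1,2}_c$ (or $C^{1,2}_{buc}$ via a localization/cutoff argument), since under Assumption \ref{ass:param-b} the coefficients of $\tilde{\mathcal L}$ lie in $C^{0,\nu}$ (Remark \ref{rm:tildeL}) with linear growth, so well-posedness of that classical martingale problem — equivalently, existence of a weak solution to \eqref{eq:SDEY} — is standard (Stroock--Varadhan). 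Concretely, fix $\tilde f\in C^{1,2}_c$. By Lemma \ref{lm:DL}, $\tilde f\in\tilde{\mathcal D}_{\tilde{\mathcal L}}$, i.e.\ $f:=\tilde f\circ\phi\in\mathcal D_{\mathcal L}$, and by Lemma \ref{lm:LL}, $\mathcal L f = (\tilde{\mathcal L}\tilde f)\circ\phi \in C_T\mathcal C^{0+}$. Since $(X,\mathbb P)$ solves MP,
\[
f(t,X_t)-f(0,X_0)-\int_0^t (\mathcal L f)(s,X_s)\,\di s
\]
is a local $\mathbb P$-martingale. Now $f(s,X_s)=\tilde f(s,\phi(s,X_s))=\tilde f(s,Y_s)$ and $(\mathcal L f)(s,X_s)=(\mathcal L f)(s,\psi(s,Y_s))=(\tilde{\mathcal L}\tilde f)(s,Y_s)$, so the expression above equals
\[
\tilde f(t,Y_t)-\tilde f(0,Y_0)-\int_0^t(\tilde{\mathcal L}\tilde f)(s,Y_s)\,\di s,
\]
which is therefore a local martingale; boundedness of $\tilde f$ and its derivatives (compact support) plus the linear-growth/Hölder bounds on the coefficients upgrade this to a true martingale. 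Hence $(Y,\mathbb P)$ is a solution to the classical martingale problem for $\tilde{\mathcal L}$, which is equivalent to being a weak solution of \eqref{eq:SDEY}. The initial law is immediate: $Y_0=\phi(0,X_0)$ with $X_0\sim\mu$ gives $Y_0\sim\mu\circ\psi(0,\cdot)=\nu$.

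\emph{Part (ii).} Conversely, let $(Y,\mathbb P)$ solve \eqref{eq:SDEY} in law with $Y_0\sim\nu$, and set $X_t:=\psi(t,Y_t)$. By It\^o's formula (using $\psi\in C^{0,1}$ and the regularity of $\phi,\psi$ recalled in Section \ref{ssc:pointwise}), for every $\tilde f\in C^{1,2}_{buc}$, the process $\tilde f(t,Y_t)-\tilde f(0,Y_0)-\int_0^t(\tilde{\mathcal L}\tilde f)(s,Y_s)\,\di s$ is a martingale. Given an arbitrary $f\in\mathcal D_{\mathcal L}$, set $\tilde f:=f\circ\psi$, which belongs to $\tilde{\mathcal D}_{\tilde{\mathcal L}}$ by definition \eqref{eq:Dtilde}. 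The delicate point here, and the main obstacle, is that $\tilde f=f\circ\psi$ is \emph{not} a priori in $C^{1,2}_{buc}$ — $\psi$ is only $C^{0,1}$ with $\nabla\psi\in\mathcal C^{(1-\beta)-}$, so $\tilde f$ need not be twice differentiable in space in the classical sense. Thus one cannot directly apply the classical martingale identity to $\tilde f$. To circumvent this, I would argue as in the proof of Lemma \ref{lm:LL} read in the reverse direction: by Lemma \ref{lm:LL} (applied with the roles of $\phi$ and its inverse), for $f\in\mathcal D_{\mathcal L}$ one has $\tilde{\mathcal L}\tilde f = (\mathcal L f)\circ\psi \in C_T\mathcal C^{0+}$ as a genuine continuous function, and the identity $f = \tilde f\circ\phi$ holds. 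Then I would approximate: take $f^k\in\mathcal D_{\mathcal L}$ of the form $\tilde f^k\circ\phi$ with $\tilde f^k\in C^{1,2}_c$ converging to $f$ in the graph topology of $\mathcal D_{\mathcal L}$ (such approximants exist by the density/separability arguments of Proposition \ref{pr:DLsep} together with Lemma \ref{lm:DL}), for which the martingale identity holds by the computation of Part (i) run backwards; then pass to the limit using the convergences $f^k\to f$ in $C_T\mathcal C^{(1+\beta)+}$ and $\mathcal L f^k\to \mathcal L f$ in $C_T\mathcal C^{0+}$ (continuity of the PDE solution map, Section \ref{ssc:pointwise}), which control $f^k(t,X_t)\to f(t,X_t)$ uniformly and $\int_0^t(\mathcal L f^k)(s,X_s)\,\di s\to\int_0^t(\mathcal L f)(s,X_s)\,\di s$ by dominated convergence; a uniform limit of (local) martingales is a (local) martingale. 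This yields that
\[
f(t,X_t)-f(0,X_0)-\int_0^t(\mathcal L f)(s,X_s)\,\di s
\]
is a local $\mathbb P$-martingale for every $f\in\mathcal D_{\mathcal L}$, i.e.\ $(X,\mathbb P)$ solves MP with drift $b$; and $X_0=\psi(0,Y_0)$ with $Y_0\sim\nu$ gives $X_0\sim\nu\circ\phi(0,\cdot)=\mu$.

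The only genuinely non-routine step is the passage through non-smooth test functions $f\circ\psi$ in Part (ii); everything else is substitution plus the already-established continuity of the PDE solution map and the composition lemmas.
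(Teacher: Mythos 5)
Your Part (i) matches the paper's argument essentially identically: you use Lemmas~\ref{lm:DL} and~\ref{lm:LL} to pass between $\tilde f$ and $f=\tilde f\circ\phi\in\mathcal D_{\mathcal L}$ and read off the Stroock--Varadhan martingale problem for $Y$ from the singular MP for $X$. (The paper uses time-independent test functions $\tilde f\in C^2_c(\R^d)$, but this is immaterial.)

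In Part (ii) you correctly identify the crux --- $\tilde f=f\circ\psi$ is not classically $C^{1,2}$ in space, so It\^o cannot be applied to $Y$ directly --- but your workaround rests on an unjustified step. You assert that any $f\in\mathcal D_{\mathcal L}$ can be approximated in the graph topology by $f^k:=\tilde f^k\circ\phi$ with $\tilde f^k\in C^{1,2}_c$, attributing this to Proposition~\ref{pr:DLsep} and Lemma~\ref{lm:DL}. Neither of these yields it: Proposition~\ref{pr:DLsep} establishes separability of $\mathcal D_{\mathcal L}$ via a countable family built from PDE solutions whose data $(g_n,f^T_n)$ lie in countable dense sets, but those solutions are in general \emph{not} of the form $\tilde f^k\circ\phi$ with $\tilde f^k$ compactly supported; Lemma~\ref{lm:DL} only gives the inclusion $\{\tilde f\circ\phi:\tilde f\in C^{1,2}_c\}\subset\mathcal D_{\mathcal L}$, not density. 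Moreover, it is far from clear that this density holds at all: simultaneously approximating $(\mathcal L f,f(T))$ by $(\tilde{\mathcal L}\tilde f^k\circ\phi,\tilde f^k(T)\circ\phi(T,\cdot))$ with compactly supported $\tilde f^k$ would essentially require compactly supported classical solutions of a nondegenerate parabolic equation with prescribed data, which do not exist in general. The paper's proof avoids this entirely by \emph{not} insisting on compact support of the approximants: it fixes $g:=\mathcal L f$, sets $\tilde g:=g\circ\psi$, mollifies \emph{only} the terminal datum $\tilde f_T:=f(T)\circ\psi(T,\cdot)$ to obtain smooth $\tilde f^n_T\in C^{2+\nu}$, and applies Lunardi's Schauder theory (Theorem~\ref{thm:lunardi}) to produce classical $\tilde f^n\in C^{1,2+\nu}\subset C^{1,2}_{buc}$ solving $\tilde{\mathcal L}\tilde f^n=\tilde g$, $\tilde f^n(T)=\tilde f^n_T$. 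That regularity suffices both for classical It\^o on $Y$ and for Lemma~\ref{lm:LL}, and setting $f^n:=\tilde f^n\circ\phi$ gives $\mathcal L f^n=g$ \emph{exactly}, so the drift integral in the martingale expression is the same for $f^n$ and $f$; convergence then reduces to $f^n(T)\to f(T)$ via continuity of the PDE solution map with a fixed right-hand side. The idea missing from your proposal is precisely this: drop compact support on the $\tilde f^n$ and obtain them from linear Schauder theory, rather than invoking a density in $\mathcal D_{\mathcal L}^s$ that is neither proved nor plausible.
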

\begin{proof}
\emph{Item (i).} 
Let $(X, \mathbb P)$ be a solution of MP. For any $\tilde f \in C_c^\infty $ we define $f:= \tilde f \circ \phi $, where $\phi$ is the unique solution of  PDE \eqref{eq:PDEphi}.
 By Lemma \ref{lm:LL} $f \in \mathcal D_{\mathcal L}$. 
Setting $Y_t:= \phi(t, X_t)$,  by Lemma \ref{lm:LL}  we have 
\[
\tilde f (Y_t) -\tilde f (Y_0) - \int_0^t (\tilde{\mathcal L} \tilde f) (s, Y_s) \mathrm ds =  f(t, X_t) - f(0,X_0) - \int_0^t ( \mathcal L f)(s, X_s) \mathrm ds , 
\]
which is a local martingale under $\mathbb P$ for all $\tilde f\in C^\infty_c$  by  Definition \ref{def:MP} since $f\in \mathcal D_{\mathcal L}$. It follows that the couple $(Y, \mathbb P)$ satisfies the Stroock-Varadhan martingale problem, therefore  $(Y, \mathbb P)$ is a solution in law of SDE \eqref{eq:SDEY}.

\emph{Item (ii).} 
Let $(Y, \mathbb P)$ be a solution in law of SDE \eqref{eq:SDEY}. We define $X_t := \psi(t, Y_t)$, where $\psi$ is the (space-)inverse of $\phi$   defined in \eqref{eq:psi}.
To show that $(X, \mathbb P)$ is a solution to MP with distributional drift $b$ we need to show that for all $f\in \mathcal D_{\mathcal L}$ the quantity
\[
f(t, X_t) - f(0, X_0) -\int_0^t (\mathcal L f)(s, X_s) \mathrm d s
\]
is a local martingale under $\mathbb P$. Since $ f\in \mathcal D_{\mathcal L} $ then there exists $g \in C_T \mathcal C^{0+}$ (so  there exists $\nu \in(0,1)$ with $g\in C_T \mathcal C^\nu$) such that $\mathcal L f = g$.
 We define  $\tilde g:= g\circ \psi$, $\tilde f_T:= f (T, \psi(T, \cdot))$ and  $\tilde f^n_T : = \tilde f_T \ast \rho_n$, where $\rho_n=p_{\frac1n}$ with $p_t$ the heat kernel.
We see that $ \tilde g\in \mathcal C^{0,\nu}$, see Appendix \ref{app:lunardi} for the explicit definition of the space.
 Indeed  $\tilde g$ is in $C([0,T]\times \R^d)$ because $g$ and $\psi$ are, and it is easy to obtain the bound 
\[
 \sup_{t\in[0,T]}  \sup_{x\neq y} \frac{|\tilde g(t, x)- \tilde g(t, y)| }{|x-y|^\nu} \leq \sup_{t\in[0,T]}  \|g(t)\|_{\mathcal C^\nu} \|\nabla \psi \|_\infty^\nu, 
\]
using the fact that $g\in C_T\mathcal C^\nu$ and $\psi \in C^{0,1}$ with gradient $\nabla \psi$ uniformly bounded, see Section \ref{ssc:pointwise}. Moreover $\tilde f^n_T\in C^{2+\nu}$ (for explicit definition of these spaces  and its inclusion in other spaces,  see Appendix \ref{app:lunardi}) and  by Remark \ref{rm:tildeL}   the coefficients of $\tilde{\mathcal L}$  are in $C^{0,\nu}$.
 So  by \cite[Theorem 5.1.9]{lunardi95} (which has been recalled in  Theorem \ref{thm:lunardi} in the Appendix for ease of reading) we know that for each $ n$  there exists  a function $\tilde f^n \in C^{1, 2+\nu}([0,T]\times \R^d)$ (see Appendix \ref{app:lunardi} for the definition of this space and its inclusion in other spaces) which is the classical solution of 
\begin{equation}\label{eq:PDEtildefn}
\left\{
\begin{array}{l}
\tilde{\mathcal L} \tilde f^n = \tilde g\\
\tilde f^n(T) = \tilde f^n_T.
\end{array}
\right.
\end{equation}
Therefore $\tilde f^n \in C^{1,2}$  and thus by It\^o's formula
\[
\tilde f^n(t, Y_t) - \tilde f^n(0, Y_0) -\int_0^t \tilde g(s, Y_s) \mathrm d s
\]
is a local martingale under $\mathbb P$. 
Here we used that   $ (\tilde { \mathcal L} \tilde f_n)(s, Y_s) = \tilde g(s, Y_s)$ by construction.  Setting $f^n:= \tilde f^n \circ \phi$ we also have  that 
\begin{equation}\label{eq:fn}
 f^n(t, X_t) -f^n(0, X_0) -\int_0^t g(s, X_s) \mathrm d s
\end{equation}
is a local martingale under $\mathbb P$. 
 Using the definition of $\tilde g$, the fact that $\tilde f^n$ is a classical solution of PDE \eqref{eq:PDEtildefn} and  $\tilde f^n \in C^{1,2}_{buc}$ (see Remark \ref{rm:C12buc})  by Lemma \ref{lm:LL}  we know that 
 \[
g= \tilde g \circ \phi = {\tilde {\mathcal L}} \tilde f^n \circ \phi = \mathcal L f^n,
 \] 
in $C_T\mathcal C^{\nu}$ and thus in particular $f^n$ is a weak solution of 
\begin{equation}\label{eq:PDEfn}
\left\{
\begin{array}{l}
{\mathcal L} f^n =  g\\
 f^n(T) =  f^n_T,
\end{array}
\right.
\end{equation}
 where  $f^n_T := \tilde f^n(T) \circ \phi (T, \cdot)$. 

Now we claim that $f^n$ is the unique mild solution to \eqref{eq:PDEfn} in $C_T\mathcal C^{(1+\beta)+}$ and that $f^n\to f$ uniformly on compacts (these claims will be proven later). By this convergence and taking the limit of \eqref{eq:fn} where we  replace $g= \mathcal L f$, we get that
\[
f(t, X_t) - f(0, X_0) -\int_0^t (\mathcal L f)(s, X_s) \mathrm d s
\]
is a local martingale under $\mathbb P$, thanks to the fact that the space of local martingales is closed under u.c.p.\ convergence.

It is left to prove that  $f^n$ is the unique mild solution to \eqref{eq:PDEfn} in $C_T\mathcal C^{(1+\beta)+}$ and that $f^n\to f$ uniformly on compacts, which we do   in three steps.\\
{\em Step 1: we prove that $f^n$ is the unique mild solution to \eqref{eq:PDEfn} in $ C_T\mathcal C^{(1+\beta)+} $.} To do so, first we show that $f^n\in C_T\mathcal C^{(1+\beta)+} $, indeed   $f^n: =  \tilde f^n \circ \phi$ with $\tilde f^n\in C_{buc}^{1,2}$ and $\phi$ solution of PDE \eqref{eq:PDEphi}, so by  Lemma \ref{lm:tildef} we have $f^n\in C_T\mathcal C^{(1+\beta)+} $. In Section \ref{ssc:pointwise} it is recalled that weak and mild solutions are equivalent  therefore  $f^n$ is the unique (mild) solution  in  $C_T\mathcal C^{(1+\beta)+}$. \\
{\em Step 2: we prove that   $f_T^n \to f_T: = f(T)$ in $\mathcal C^{(1+\beta)+}$.} Recall that $f^n_T = \tilde f^n_T \circ \phi (T, \cdot)$, so by Lemma \ref{lm:tildef} again we have $f^n_T\in\mathcal C^{(1+\beta)+} $. Moreover  $f_T = f(T)\in \mathcal C^{(1+\beta)+} $ because $f\in \mathcal D_{\mathcal L}$. 
Now we notice that $\tilde f_T \in \mathcal C^{(1+\beta)+} $  by Lemma \ref{lm:gh}  using the definition  $\tilde f_T := f_T \circ \psi(T, \cdot) $, where $f_T\in \mathcal C^{(1+\beta)+}$ by definition of $\mathcal D_{\mathcal L} $ and $\psi( T, \cdot) \in C^1$ with $\nabla \psi (T, \cdot) \in \mathcal C^{(1-\beta)-}$ see Section \ref{ssc:pointwise}.
Since  $\tilde f_T^n =  \tilde  f_T \ast \rho_n$ and the convolution with the mollifier $\rho_n$ maintains the same regularity of $\tilde f_T$ by \cite[Lemma 2.4]{issoglio_russoPDEa},
then  $\tilde f^n_T \to \tilde f_T$ in $\mathcal C^{(1+\beta)+}$, see Section \ref{ssc:pointwise}. Finally again by  Lemma \ref{lm:gh} we have  $\tilde f^n_T \circ \phi(T, \cdot) \to \tilde f_T \circ \phi(T, \cdot)$ in $\mathcal C^{(1+\beta)+}$ as wanted.\\
{\em Step 3: we prove that  $f^n\to f$ uniformly, in particular uniformly  on compacts.} From Step 1 we have  that $f^n$ is the unique solution of \eqref{eq:PDEfn} in $C_T C^{(1+\beta)+}$. Moreover we recall that  $f$ is the unique mild solution in the same space of $\mathcal L f = g$ with terminal condition the value of the function itself, $ f_T = f(T)$. 
We can now apply continuity results on the PDE \eqref{eq:PDEfn}, see Section \ref{ssc:pointwise}, to conclude that  $f^n\to f$ in $C_T \mathcal C^{(1+\beta)+}$.
 This clearly implies that $f^n\to f$ uniformly, as wanted.
\end{proof}

\begin{remark}
It is possible to define an equivalent MP by a  transformation different than the one used in Theorem \ref{thm:mart}. Indeed, it is enough to consider a generic transformation $\phi\in C_T D \mathcal C^{\beta+}$ which is space-invertible with inverse $\psi$, and under which one has the equivalence between  $(X, \mathbb P)$ solving  the MP with respect to $\mathcal L$  and $(\phi (X), \mathbb P)$ solving the MP with respect to $\tilde{\mathcal L}$, where $\tilde{ \mathcal  L} \tilde f := \mathcal L f \circ \psi$. The issue going further would be to interpret $\tilde {\mathcal L} \tilde f = \tilde g$ as a PDE, which   would  need to be considered in the mild sense and will presumably require some regularity of $\phi$. Well-posedness of such an equation would be based on Schauder-type estimates for the time-dependent semigroup generated by the diffusive component of the operator $ \tilde{\mathcal L}$, which are far from being straightforward.
\end{remark}

From now on, let $(b^n)$ be the sequence defined in \cite[Proposition 2.4]{issoglio_russoMK}, so we know that $b^n\to b$ in $C_T\mathcal C^{-\beta}$, $b^n \in C_T \mathcal C^\gamma$ for all $\gamma \in \R$ and $b^n$ is bounded and Lipschitz.   Recall that $\lambda>0$ has been  fixed and independent of $n$, chosen such that \eqref{eq:lambda} holds.
To conclude the section, we prove  a continuity result for the transformed problem for $Y$ that will be useful when we will prove analogous continuity results for the original problem for $X$. Let us denote by $Y^n$ the strong solution of 
\begin{equation}\label{eq:Yn}
Y^n_t= \phi(0, X_0) + \lambda \int_0^t Y^n_s \di s - \lambda\int_0^t \psi^n(s, Y^n_s) \di s + \int_0^t \nabla\phi^n(s, \psi^n(s, Y^n_s) )\di W_s,
\end{equation}
which is the counterpart of \eqref{eq:SDEY} when one replaces $b$ with $b^n$.
 
\begin{remark}\label{rm:Yn}
We notice that the drift and the diffusion coefficient of \eqref{eq:Yn} are uniformly bounded in $n$. Indeed the drift is given by  $\lambda (y -\psi^n(s, y)) = \lambda  u^n (s, \psi^n(s,y))$ and the diffusion coefficient is $ \nabla\phi^n(s, \psi^n(s, y)) =   \nabla u^n(s, \psi^n(s, y)) + I_d$.
Thanks \cite[Lemma 4.9]{issoglio_russoPDEa}, for every fixed $\alpha \in (\beta, 1-\beta)$ we have
\[
\| u^n \|_{ C_T \mathcal C^{\alpha+1}}  \leq R_\lambda( \| b^n\|_{ C_T \mathcal C^{-\beta}}  )   \|b^n\|_{C_T \mathcal C^{-\beta}} \leq R_\lambda( \sup_n\| b^n\|_{ C_T \mathcal C^{-\beta}}  ) \sup_n \|b^n\|_{C_T \mathcal C^{-\beta}},
\] 
where $R_\lambda$ is an increasing function.
Thus $u_n$  
 and $\nabla u_n$ are uniformly bounded in $n$.
\end{remark}

\begin{lemma}\label{lm:Yntight}
  Let $Y^n$ be the solution of SDE \eqref{eq:Yn}. Then the sequence
  of laws of   $(Y^n)$ is tight.
\end{lemma}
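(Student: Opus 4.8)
The plan is to establish tightness of $(Y^n)_n$ in $C([0,T];\R^d)$ via the classical Kolmogorov–Chentsov / Aldous-type criterion, exploiting the uniform-in-$n$ bounds on the coefficients of SDE \eqref{eq:Yn} recalled in Remark \ref{rm:Yn}. Write $Y^n_t = \phi(0,X_0) + \int_0^t \bar b^n(s,Y^n_s)\,\di s + \int_0^t \bar\sigma^n(s,Y^n_s)\,\di W_s$, where $\bar b^n(s,y) = \lambda u^n(s,\psi^n(s,y))$ and $\bar\sigma^n(s,y) = \nabla u^n(s,\psi^n(s,y)) + I_d$. By Remark \ref{rm:Yn}, both $\|\bar b^n\|_\infty$ and $\|\bar\sigma^n\|_\infty$ are bounded by a constant $K$ independent of $n$.

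First I would check tightness of the initial laws: since $\phi(0,\cdot)$ is a fixed (deterministic, continuous) function and $X_0\sim\mu$ is a fixed random variable common to all $n$, the laws of $Y^n_0 = \phi(0,X_0)$ are all equal, hence trivially tight. Second, for the increments, I would use that for $0\le s\le t\le T$,
\begin{equation*}
\E\big[|Y^n_t - Y^n_s|^4\big] \le C\,\E\Big[\Big|\int_s^t \bar b^n(r,Y^n_r)\,\di r\Big|^4\Big] + C\,\E\Big[\Big|\int_s^t \bar\sigma^n(r,Y^n_r)\,\di W_r\Big|^4\Big].
\end{equation*}
The drift term is bounded by $C K^4 |t-s|^4 \le C K^4 T^2 |t-s|^2$, and the stochastic integral term is controlled by the Burkholder–Davis–Gundy inequality by $C\,\E\big[\big(\int_s^t |\bar\sigma^n(r,Y^n_r)|^2\,\di r\big)^2\big] \le C K^4 |t-s|^2$. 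Hence $\E[|Y^n_t-Y^n_s|^4] \le C_K |t-s|^2$ uniformly in $n$, which by the Kolmogorov–Chentsov tightness criterion (see e.g. \cite[Problem 2.4.11]{karatzas_shreve} or \cite[Theorem 23.7]{kallenberg}) yields that $(Y^n)_n$ is tight in $C([0,T];\R^d)$. Alternatively one may invoke Aldous's criterion directly, using that for stopping times $\tau_n$ and $\delta\to 0$, $\E[|Y^n_{(\tau_n+\delta)\wedge T} - Y^n_{\tau_n}|^2] \le CK^2(\delta^2+\delta) \to 0$ uniformly in $n$, together with the boundedness in probability of $\sup_{t}|Y^n_t|$ which follows again from the uniform bound on the coefficients via BDG and Gronwall.

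The only mildly delicate point is making sure the constants in the moment estimates are genuinely independent of $n$; this is exactly what Remark \ref{rm:Yn} provides, since the bounds on $u^n$ and $\nabla u^n$ there depend only on $\sup_n\|b^n\|_{C_T\mathcal C^{-\beta}}$, which is finite because $b^n\to b$ in $C_T\mathcal C^{-\beta}$. There is no real obstacle beyond bookkeeping: the coefficients of \eqref{eq:Yn} are, uniformly in $n$, bounded measurable functions, so the SDEs are all driven by the same Brownian motion with uniformly controlled coefficients and the standard tightness machinery applies verbatim.
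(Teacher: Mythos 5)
Your proposal is correct and follows essentially the same route as the paper: both rest on the key uniform-in-$n$ fourth-moment estimate $\E[|Y^n_t-Y^n_s|^4]\leq C|t-s|^2$ (which follows from the bounded coefficients of Remark \ref{rm:Yn} via Burkholder--Davis--Gundy, as you compute), combined with control of the initial laws. The only cosmetic difference is how the moment estimate is converted into tightness: the paper passes through the Garsia--Rodemich--Rumsey lemma plus Chebyshev to get the modulus-of-continuity condition of \cite[Theorem 4.10, Ch.\ 2]{karatzasShreve}, whereas you invoke a Kolmogorov--Chentsov/Aldous-type criterion directly; these are two packagings of the same argument. One small point worth flagging: equation \eqref{eq:Yn} as displayed has initial condition $\phi(0,X_0)$, independent of $n$, so your observation that the initial laws are identically distributed (hence trivially tight) is consistent with that display, while the paper's proof treats $Y^n_0=\phi^n(0,X_0)$ and instead uses the uniform bounds on $\|\nabla\phi^n\|_\infty$ and $|\phi^n(0,0)|$; either reading is handled without difficulty, but it is good that you noticed the initial laws are actually trivially tight under the stated equation.
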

\begin{proof}
According to \cite[Theorem 4.10 in Chapter 2]{karatzasShreve} we need to prove that 
\begin{equation}\label{eq:KS1}
\lim_{\eta\to\infty} \sup_{n\geq 1} \mathbb P(| Y^n_0 |>\eta ) =0
\end{equation}
and that for every $\varepsilon>0$
\begin{equation}\label{eq:KS2}
\lim_{\delta\to0} \sup_{n\geq 1} \mathbb P \Big ( \sup_{\substack{
s,t \in [0,T] \\|s-t|\leq \delta }} |Y^n_t-Y^n_s|>\varepsilon   \Big ) =0 .
\end{equation}
We know that $ Y^n_0 =  \phi^n(0, X_0)$ and $X_0\sim \mu$. By continuity results on the PDE \eqref{eq:PDEphi}, see Section \ref{ssc:pointwise}, we have that $\phi^n\to\phi $ uniformly and   that 
\[a:= \sup_{n\geq 1} \|\nabla \phi^n\|_\infty<\infty  \quad \text{and} \quad  b:= \sup_{n\geq 1} |\phi^n(0,0)|<\infty .\]
So the first condition \eqref{eq:KS1} for tightness gives
\begin{align*}
\mathbb P(| Y^n(0) |>\eta ) &= \mathbb P(| \phi^n(0, X_0) |>\eta ) \\
&\leq \mathbb P(| \phi^n(0,0) | + \|\nabla \phi^n\|_\infty |X_0| >\eta ) \\
&\leq \mathbb P(a + b |X_0| >\eta ).
\end{align*}
Noticing that $a+b|X_0| $ is a finite random variable (independent of $n$) then we have \eqref{eq:KS1}.

Concerning the second bound \eqref{eq:KS2} for tightness, we first observe that the classical Kolmogorov criterion 
\begin{equation}\label{eq:kolm}
\E[|Y_t^n - Y_s^n|^4] \leq C |t-s|^2
\end{equation}
holds for some positive constant $C$ independent of $n$. The proof of this bound works exactly as the the proof in \cite[Step 3 of Proposition 29]{flandoli_et.al14}: indeed, the process $Y^n$ therein has the same form as $Y^n$ given by \eqref{eq:Yn}.
By Remark \ref{rm:Yn} we have that the drift and diffusion coefficients are uniformly bounded  in $n$, so that  \cite[Step 3 of Proposition 29]{flandoli_et.al14}  allows to  show \eqref{eq:kolm}.
 
 Now we apply Garsia-Rodemich-Rumsey Lemma (see e.g.\ \cite[Section 3]{BarlowYor}) and we know that for every $0<m<1$ there exists a constant $C'$ and a random variable $\Gamma_n$ such that 
\[
| Y_t^n - Y_s^n|^4 \leq C' |t - s|^m \Gamma_n
\]
with 
\begin{equation}\label{eq:Gamma}
\mathbb E(\Gamma_n) \leq c\  C \frac1{1-m} T^{2-m},
\end{equation}
where $c$ is a universal constant. 
Consequently, for every $\varepsilon>0$ and for every $n\geq1$
\begin{align*}
\mathbb P \Big ( \sup_{\substack{s,t \in [0,T] \\|s-t|\leq \delta }} |Y^n_t-Y^n_s|>\varepsilon   \Big ) 
=&  \mathbb P \Big ( \varepsilon < \sup_{\substack{
s,t \in [0,T] \\|s-t|\leq \delta }}  |Y^n_t-Y^n_s| \leq   C'^{\frac14} \delta^{\frac m4} \Gamma_n^{\frac14} \Big )\\
\leq &   \mathbb P \Big (  \varepsilon  \leq   C'^{\frac14} \delta^{\frac m4} \Gamma_n^{\frac14} \Big )\\
\leq &   \mathbb P \Big ( \Gamma_n   \geq  \frac {\varepsilon^4}{ C'\delta^{m} } \Big )\\
\leq &  \frac { C'\delta^{m} } {\varepsilon^4} \E(\Gamma_n),
\end{align*}
by Chebyshev inequality.
So, using \eqref{eq:Gamma} we have that $\sup_{n\geq 1} \mathbb P \Big ( \sup_{\substack{
s,t \in [0,T] \\|s-t|\leq \delta }} |Y^n_t-Y^n_s|>\varepsilon   \Big ) \to 0$ as $\delta\to0$ and \eqref{eq:KS2} is established.
\end{proof}

\begin{remark}\label{rm:SW}
When $Y_0=y $ is a deterministic initial condition, we know that \eqref{eq:SDEY} admits  existence  and uniqueness in law by \cite[Theorem 10.2.2]{stroock_varadhan},   because the drift  and diffusion coefficient are 
bounded by Remark \ref{rm:Yn} and the diffusion coefficients is continuous since $\nabla \phi$ and $\psi$ are continuous and it is uniformly non degenerate since $\|\nabla u\|_\infty \leq \frac12$, see Section \ref{ssc:pointwise}.
\end{remark}

\section{The martingale problem for $X$}\label{sc:MP}

In this section we solve the martingale problem for the process $X$, which formally satisfies an SDE of the form
\[
X_t = X_0 + \int_0^t b(s, X_s) \di s +  W_t,
\]
where $W$ is a $d$-dimensional Brownian motion, the drift $b$ is an element of $C_T\mathcal C^{(-\beta)+}$  that satisfies  Assumption \ref{ass:param-b}  and the initial condition $X_0$ is a given random variable. To do so, we first solve the problem for a deterministic initial condition and then we use this to extend the result to any initial condition. 
We also derive some properties about said solution, such as its link to the Fokker Planck equation and continuity properties.

\vspace{10pt}

We start with the case when the drift $b$ is a function, by comparing the notion of solution to the singular MP   with the notion of  solution in law of SDEs, and with the Stroock-Varadhan Martingale Problem, see \cite[Section 6.0]{stroock_varadhan}.  We recall that $(X, \mathbb P)$ is a solution to  the Stroock-Varadhan Martingale Problem  with respect to $\mathcal L$ if for every $f\in C_c^\infty$ 
\begin{equation}\label{eq:SV}
f( X_t) - f(X_0) - \int_0^t (\frac12 \Delta f (X_s) + \nabla f(X_s)  b(s, X_s)) \di s 
\end{equation}
is a local martingale.

\begin{lemma}\label{lm:strongMP}
Let $b\in C_T \mathcal C^{0+}$. Let $(\Omega, \mathcal F, \mathbb P)$ be some probability space. Let  $X_0 \sim \mu$. Then the following are equivalent.
\begin{itemize}
\item[(i)] The couple $(X, \mathbb P)$ is solution to the MP with distributional drift $b$.
\item[(ii)] The couple $(X, \mathbb P)$ is solution to the  Stroock-Varadhan Martingale Problem with respect to $\mathcal L$.
\item[(iii)] There exists a Brownian motion $W$ such that the process  $X$ under $\mathbb P$ is a solution of $\di X_t = b(t, X_t) \di t + \di W_t$.
\end{itemize}
\end{lemma}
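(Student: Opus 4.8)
The plan is to prove (ii) $\Leftrightarrow$ (iii) by classical means and then close the cycle with (i) $\Rightarrow$ (ii) and (iii) $\Rightarrow$ (i). For (ii) $\Leftrightarrow$ (iii): since $b\in C_T\mathcal C^{0+}\subset C_T C_b$ is bounded and the diffusion coefficient is the identity (hence uniformly non-degenerate), this is the standard correspondence between the Stroock--Varadhan martingale problem \eqref{eq:SV} and weak solutions of $\di X_t=b(t,X_t)\di t+\di W_t$, see \cite[Chapter 6]{stroock_varadhan}. Concretely, from (ii) one tests \eqref{eq:SV} against smooth cut-offs of $x\mapsto x_i$ and $x\mapsto x_ix_j$ to see that $W_t:=X_t-X_0-\int_0^t b(s,X_s)\,\di s$ is a continuous local martingale with $\langle W^i,W^j\rangle_t=\delta_{ij}t$, hence a Brownian motion by Lévy's characterisation, and $X$ solves the SDE with this $W$; conversely (iii) gives (ii) by Itô's formula applied to $f(X_t)$, $f\in C_c^\infty$.

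For (i) $\Rightarrow$ (ii) I would show that every $f\in C_c^\infty(\R^d)$, regarded as a function constant in time, belongs to $\mathcal D_{\mathcal L}$; then \eqref{eq:MP} for such $f$ is exactly \eqref{eq:SV}, and (i) yields (ii). Indeed $f\in C_T\mathcal C^{(1+\beta)+}$ and $f\in\mathcal D^0_{\mathcal L}$ (it is smooth with $\dot f=0$), so $\mathcal L f=\tfrac12\Delta f+\nabla f\,b$ is well defined by Definition \ref{def:L}. Writing $b\in C_T\mathcal C^{\gamma_0}$ for some $\gamma_0>0$, the term $\tfrac12\Delta f$ is in $C_c^\infty$ and the product $\nabla f\,b$ has compact support and lies in $C_T\mathcal C^{\gamma_0}_c$, so that $g:=\mathcal L f\in C_T\bar{\mathcal C}_c^{0+}$; moreover $f(T)=f\in C_c^\infty\subset\bar{\mathcal C}_c^{(1+\beta)+}$. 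Since the PDE $\mathcal L v=g$, $v(T)=f$, has a unique weak solution in $C_T\mathcal C^{(1+\beta)+}$ by Remark \ref{rm:D}, and $f$ is such a solution (it solves the equation in the sense of Definition \ref{def:L}, hence weakly), we conclude $f\in\mathcal D_{\mathcal L}$.

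For (iii) $\Rightarrow$ (i), fix $f\in\mathcal D_{\mathcal L}$, write $g:=\mathcal L f\in C_T\bar{\mathcal C}_c^{0+}$ and $f_T:=f(T)\in\bar{\mathcal C}_c^{(1+\beta)+}$, and set $g^n:=g\ast\rho_n$, $f_T^n:=f_T\ast\rho_n$ for a standard mollifier $\rho_n$. Let $f^n$ be the unique weak solution of $\mathcal L f^n=g^n$, $f^n(T)=f^n_T$ in $C_T\mathcal C^{(1+\beta)+}$. Rewriting $\partial_t f^n+\tfrac12\Delta f^n=g^n-\nabla f^n\,b$, the right-hand side lies in $C_T\mathcal C^{\delta}$ for some $\delta\in(0,1)$ (since $\nabla f^n\in C_T\mathcal C^{\beta+}$, $b\in C_T\mathcal C^{\gamma_0}$, and $g^n$ is smooth) while $f^n_T\in C_b^\infty$; hence parabolic Schauder theory, exactly as used in the proof of Theorem \ref{thm:mart} via \cite[Theorem 5.1.9]{lunardi95}, gives $f^n\in C^{1,2}([0,T]\times\R^d)$. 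Applying Itô's formula to $f^n(t,X_t)$ along $\di X_t=b(t,X_t)\di t+\di W_t$ and using $\partial_t f^n+\tfrac12\Delta f^n+\nabla f^n\,b=\mathcal L f^n=g^n$, one gets that $f^n(t,X_t)-f^n(0,X_0)-\int_0^t g^n(s,X_s)\,\di s=\int_0^t\nabla f^n(s,X_s)\,\di W_s$ is a local martingale under $\mathbb P$. By the continuity results for PDE \eqref{eq:PDE} recalled in Section \ref{ssc:pointwise}, $g^n\to g$ in $C_T\mathcal C^{-\beta}$ and $f^n_T\to f_T$ in $\mathcal C^{(1+\beta)+}$ force $f^n\to f$ in $C_T\mathcal C^{(1+\beta)+}$; in particular $f^n\to f$ and $g^n\to g$ uniformly on compacts, so $f^n(t,X_t)-f^n(0,X_0)-\int_0^t g^n(s,X_s)\,\di s$ converges u.c.p.\ to $f(t,X_t)-f(0,X_0)-\int_0^t\mathcal L f(s,X_s)\,\di s$, which is therefore a local martingale since the space of local martingales is closed under u.c.p.\ convergence. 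This is precisely (i).

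The main obstacle is the regularity step inside (iii) $\Rightarrow$ (i): the elements of $\mathcal D_{\mathcal L}$ are a priori only of class $C_T\mathcal C^{(1+\beta)+}$, so to apply the classical Itô formula one must upgrade the mollified solutions $f^n$ to genuine $C^{1,2}$ functions and, at the same time, ensure the approximation converges strongly enough (uniformly on compacts for the values, uniformly for the forcing term) to pass to the limit in the martingale relation. Both points are settled by ingredients already available in the paper—the parabolic Schauder theory used for Theorem \ref{thm:mart} and the PDE continuity results of \cite{issoglio_russoPDEa}—so the difficulty is essentially bookkeeping rather than a new idea.
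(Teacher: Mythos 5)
Your proof is correct. The implications (ii)~$\Leftrightarrow$~(iii) and (i)~$\Rightarrow$~(ii) match the paper's argument (the paper merely asserts $C_c^\infty\subset\mathcal D_{\mathcal L}$ without the details you supply, which are a useful addition). For (iii)~$\Rightarrow$~(i), however, you take a genuinely different route. The paper's proof is short because it leverages the already-proven equivalence Theorem~\ref{thm:mart}: it shows (via \cite[Remark~4.12]{issoglio_russoPDEa}) that $u$, and hence the Zvonkin transform $\phi=\mathrm{id}+u$, is $C^{1,2}$; applies It\^o's formula to conclude that $Y=\phi(\cdot,X)$ solves the transformed SDE~\eqref{eq:SDEY}; and then invokes Theorem~\ref{thm:mart}(ii). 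You instead work directly with each test function $f\in\mathcal D_{\mathcal L}$: mollify $g=\mathcal L f$ and $f(T)$, upgrade the mollified solutions $f^n$ to $C^{1,2}$ by Schauder, apply It\^o along $\di X_t=b(t,X_t)\di t+\di W_t$, and pass to the ucp limit using the PDE continuity estimates. In effect you reproduce, in the simpler setting where $b$ is a function, the mollification scheme that the paper uses inside the proof of Theorem~\ref{thm:mart}(ii), bypassing the transformation $\phi$ entirely. Your version is more self-contained and avoids pushing the initial law forward and back through $\phi$, $\psi$; the paper's version is shorter precisely because Theorem~\ref{thm:mart} has already absorbed that work. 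One small point: your upgrade $f^n\in C^{1,2}$ implicitly uses that the weak and classical solutions agree once the forcing term $g^n-\nabla f^n\,b$ is H\"older --- this is exactly \cite[Remark~4.12]{issoglio_russoPDEa}, and citing it there (rather than only Lunardi) would close that step cleanly.
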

\begin{proof}
(ii) $\iff$ (iii). This follows from  the  Stroock-Varadhan  classical theory, see \cite[Chapter 8]{stroock_varadhan}. We sketch the proof for completeness.   If the Stroock-Varadhan Martingale Problem problem is fulfilled, i.e.\ if  (ii) holds, 
  then in fact \eqref{eq:SV} also holds for $f \in C^2$. Choosing $f(x) = x^i$ and
  $f(x) = x^i x^j , 1 \le i,j \le d,$ one can show that
  $M_t = X_t - X_0 - \int_0^t b(s,X_s) \di s$ is a local martingale
  with covariation matrix $([X^i, X^j])_{i,j}$ being the identity.
  The process $M$ is then a standard $d$-dimensional Brownian
  motion by L\'evy's characterization theorem.
  Viceversa if $X$ fulfills the SDE (iii) then
  (ii) follows by It\^o's formula.
  \\
  (i) $\Longrightarrow$ (ii).   For this it is enough to show that for every $f\in C_c^\infty$  \eqref{eq:SV} holds. This is true since $C_c^\infty \subset \mathcal D_{\mathcal L}$ in this case.\\
  (iii) $\Longrightarrow$ (i). 
We will make use of the spaces  $C^{0,\nu}([0,T] \times \mathbb R^d)$ and $  C^{1, 2+\nu}$ for $\nu \in (0,1)$, which have been defined in  Appendix \ref{app:lunardi}. 
Since $ b \in C_T {\mathcal C}^{0+}$, by \cite[Remark 4.12]{issoglio_russoPDEa} we know that the unique solution $u\in C_T \mathcal C^{(1+\beta)+}$ of PDE \eqref{eq:PDEu} is also the classical solution as given in Theorem \ref{thm:lunardi}, hence  $u\in  C^{1,2}  $. 
We set $\phi = \text{id} + u$, which thus belongs to $ C^{1, 2}$ so by It\^o's formula applied  to $ Y= \phi (t, X_t)$ where $X$ is a solution to $\di X_t = b(t, X_t) \di t + \di W_t$  we get that $Y$ solves \eqref{eq:SDEY} with initial condition $Y_0 \sim \nu:= \mu(\psi(0, \cdot))$, where $\psi $ is the inverse of $\phi$.  
 Thus Theorem \ref{thm:mart} implies that $(X, \mathbb P) $ is a solution to the MP with (distributional) drift $b$ and i.c.\ $\mu$, as wanted. 
\end{proof}

Next we show the link between the law of the solution to the MP and the Fokker-Planck equation, in particular we show that   the law of the solution to the martingale problem with distributional drift  satisfies a Fokker-Planck equation.

\begin{theorem}\label{thm:McKvFP}
  Let Assumption \ref{ass:param-b} hold.
Let $(X,\mathbb P)$ be a solution to the martingale problem with distributional drift $b$ and initial condition $\mu$ with density $v_0$. Let $v(t, \cdot)$ be the law density of $X_t$ and let us assume that $v\in C_T\mathcal C^{\beta+}$. Then $v$ is  a weak solution of the Fokker-Planck equation, that is for every $\varphi\in \mathcal S$ we have 
\begin{equation}\label{eq:testPDE}
\langle \varphi, v(t)\rangle  =   \langle   \varphi, v_0\rangle + \int_0^t \langle \frac12 \Delta \varphi, v(s) \rangle \di s  + \int_0^t \langle  \nabla \varphi,    v (s) b(s) \rangle  \di s,  
\end{equation}
for all $t\in[0,T]$.
\end{theorem}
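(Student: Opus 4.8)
The strategy is to test the martingale property \eqref{eq:MP} against suitable functions $f\in\mathcal D_{\mathcal L}$ built from a fixed time-independent $\varphi\in\mathcal S$, take expectations to kill the martingale part, and then translate the resulting identity into \eqref{eq:testPDE} via the duality between $\mathcal L$ and the Fokker-Planck operator. Concretely, fix $t\in[0,T]$ and $\varphi\in\mathcal S$. I would like to use $f(s,x)=\varphi(x)$ as a test function, but $\varphi$ is not (in general) of the form required by $\mathcal D_{\mathcal L}$ (it is not a solution of $\mathcal L f=g$ with the prescribed terminal condition). The remedy is to solve the backward PDE \eqref{eq:PDE} on $[0,t]$ with terminal datum $v_T:=\varphi$ and source $g:=\tfrac12\Delta\varphi+\nabla\varphi\, b$; more precisely, for each $n$ I would take $\varphi_n\in C_c^\infty$ approximating $\varphi$, solve $\mathcal L f_n=g_n$ on $[0,t]$ with $f_n(t)=\varphi_n$ and $g_n:=\tfrac12\Delta\varphi_n+\nabla\varphi_n\,b$, check that $f_n\in\mathcal D_{\mathcal L}$ (here one needs $g_n\in C_T\bar{\mathcal C}_c^{0+}$, which holds because $\varphi_n$ has compact support and $b\in C_T\mathcal C^{(-\beta)+}$, so that $\nabla\varphi_n\,b\in C_T\mathcal C^{(-\beta)+}$ has compact support and lies in $\bar{\mathcal C}_c^{0+}$ after checking the Besov exponent, and similarly $f_n(t)\in\bar{\mathcal C}_c^{(1+\beta)+}$), and then pass to the limit.

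\textbf{Main computation.} For such an $f_n$, applying the martingale property \eqref{eq:MP} on $[0,t]$ (note $f_n$ is a weak solution of $\mathcal L f_n=g_n$, and $\mathcal D_{\mathcal L}\subset\mathcal D_{\mathcal L}^0$ by Remark \ref{rm:D}) and taking $\mathbb P$-expectation, the local-martingale term vanishes (after a localization argument, using boundedness of $f_n$ and its gradient so that the stopped processes are true martingales and dominated convergence applies), giving
\begin{equation*}
\mathbb E[f_n(t,X_t)]-\mathbb E[f_n(0,X_0)]=\int_0^t\mathbb E[g_n(s,X_s)]\,\di s.
\end{equation*}
Now rewrite each term using the law density $v(s,\cdot)$ of $X_s$: the left side is $\langle\varphi_n,v(t)\rangle-\langle f_n(0,\cdot),v_0\rangle$, and the right side is $\int_0^t\langle\tfrac12\Delta\varphi_n,v(s)\rangle\,\di s+\int_0^t\langle\nabla\varphi_n\,b(s),v(s)\rangle\,\di s$, where the last pairing must be interpreted as the dual pairing between $\mathcal C^{(-\beta)+}$ and $\mathcal C^{\beta+}$ (this is where $v\in C_T\mathcal C^{\beta+}$ is used, via Bony's estimate \eqref{eq:bony}, to make sense of $v(s)b(s)$ and of $\langle\nabla\varphi_n,v(s)b(s)\rangle$, matching the right-hand side of \eqref{eq:testPDE}). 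Then let $n\to\infty$: $\varphi_n\to\varphi$ in every $\mathcal C^\gamma$, hence $g_n\to g$ in $C_T\mathcal C^{(-\beta)+}$, so by the continuity result for \eqref{eq:PDE} recalled in Section \ref{ssc:pointwise} (with fixed terminal condition direction, or rather terminal data $\varphi_n\to\varphi$ in $\mathcal C^{(1+\beta)+}$ and $g_n\to g$ in $C_T\mathcal C^{-\beta}$) we get $f_n\to f$ in $C_T\mathcal C^{(1+\beta)+}$, in particular $f_n(0,\cdot)\to f(0,\cdot)$ uniformly and boundedly; combined with the convergences of $\varphi_n,\Delta\varphi_n,\nabla\varphi_n$ and continuity of the pairings, all four terms converge and one is left with
\begin{equation*}
\langle\varphi,v(t)\rangle-\langle f(0,\cdot),v_0\rangle=\int_0^t\langle\tfrac12\Delta\varphi,v(s)\rangle\,\di s+\int_0^t\langle\nabla\varphi\,b(s),v(s)\rangle\,\di s.
\end{equation*}

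\textbf{Closing the identity.} It remains to identify $\langle f(0,\cdot),v_0\rangle$ with $\langle\varphi,v_0\rangle$ — but this is \emph{not} true in general, and indeed the desired \eqref{eq:testPDE} has $\langle\varphi,v_0\rangle$, not $\langle f(0),v_0\rangle$. The resolution is to note that the identity above holds with $t$ replaced by any $r\le t$ and with the PDE solved on $[0,r]$; equivalently, one should run the argument with $\varphi$ frozen as terminal condition at the \emph{running} time, i.e.\ observe that $s\mapsto\langle\varphi,v(s)\rangle$ and the integral terms on the right define the same function, so differentiating (or rather using the semigroup/flow property of solutions of \eqref{eq:PDE}) one reduces to the statement that $h(r):=\langle\varphi,v(r)\rangle-\langle\varphi,v_0\rangle-\int_0^r\langle\tfrac12\Delta\varphi,v(s)\rangle\di s-\int_0^r\langle\nabla\varphi\,b(s),v(s)\rangle\di s$ vanishes; applying the computation above on $[0,r]$ gives $h(r)=\langle f^{(r)}(0),v_0\rangle-\langle\varphi,v_0\rangle$ where $f^{(r)}$ solves $\mathcal L f^{(r)}=g$ on $[0,r]$ with $f^{(r)}(r)=\varphi$, and by continuity of $r\mapsto f^{(r)}(0)$ and $f^{(0)}(0)=\varphi$ together with the fact that $h(0)=0$ and the structure is linear in $r$... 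Actually the cleanest route, which I expect to be the intended one and which I would adopt, is: apply the expectation identity directly with $f=f^{(t)}$ solving $\mathcal L f = g$ on $[0,t]$ with terminal value $\varphi$ at time $t$, but \emph{also} apply it on $[0,t]$ to the function which equals $f^{(t)}$ — there is no discrepancy because $g=\tfrac12\Delta\varphi+\nabla\varphi\,b$ is itself the Fokker–Planck source, and one checks $\langle f^{(t)}(0),v_0\rangle$ against the Kolmogorov/duality identity $\mathbb E[\varphi(X_t)]=\langle f^{(t)}(0),v_0\rangle+\int_0^t\mathbb E[(\mathcal L f^{(t)}-\text{?})]\,\di s$; comparing with the martingale identity shows the two integral representations of $\mathbb E[\varphi(X_t)]$ coincide, yielding \eqref{eq:testPDE} after reading both sides via $v$.

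\textbf{Expected main obstacle.} The genuinely delicate points are (a) verifying that the constructed $f_n$ really lie in $\mathcal D_{\mathcal L}$ — in particular that the source $g_n=\tfrac12\Delta\varphi_n+\nabla\varphi_n\,b$ belongs to $C_T\bar{\mathcal C}_c^{0+}$ (the Bony product $\nabla\varphi_n\,b$ lands in $\mathcal C^{(-\beta)+}$, which sits below $\bar{\mathcal C}_c^{0+}$, so one must be slightly careful: one actually solves with $g_n$ in $C_T\mathcal C^{(-\beta)+}$, but then $f_n$ may fail to be admissible in $\mathcal D_{\mathcal L}$ as defined, so one needs instead to approximate the \emph{source} too, replacing $g_n$ by mollified versions in $\bar{\mathcal C}_c^{0+}$, adding one more limiting layer); and (b) making the pairing $\langle\nabla\varphi\,b(s),v(s)\rangle$ rigorous and stable under the limit, which rests on Remark \ref{rm:bonyt} and the hypothesis $v\in C_T\mathcal C^{\beta+}$ with $\beta<1/2$ guaranteeing $\beta+(-\beta)+\eps>0$. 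I would organize the proof so that all these approximations (of $\varphi$, of $g$, and the localization of the local martingale) are handled in one combined limiting step at the end.
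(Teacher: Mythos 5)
Your overall strategy (insert admissible test functions $f_n\in\mathcal D_{\mathcal L}$ into the martingale property, take expectations, pass to the limit) is the one the paper uses. You also correctly identify the technical obstruction — the source $g=\tfrac12\Delta\varphi+\nabla\varphi\,b$ lands only in $C_T\mathcal C^{(-\beta)+}$, not in $C_T\bar{\mathcal C}_c^{0+}$, so some approximation of $b$ is needed to stay inside $\mathcal D_{\mathcal L}$. But there is a genuine gap in your ``Closing the identity'' paragraph, which would prevent the argument from terminating.

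The issue: you claim ``it remains to identify $\langle f(0,\cdot),v_0\rangle$ with $\langle\varphi,v_0\rangle$ — but this is \emph{not} true in general,'' and then spend the rest of that paragraph in an inconclusive attempt to repair a discrepancy. In fact there is no discrepancy. The time-independent function $\varphi$ satisfies, by the very definition of $\mathcal L$ on $\mathcal D_{\mathcal L}^0$, $\mathcal L\varphi=\dot\varphi+\tfrac12\Delta\varphi+\nabla\varphi\,b=\tfrac12\Delta\varphi+\nabla\varphi\,b=g$; that is, $\varphi$ \emph{is} the unique (weak/mild) solution of the backward PDE with terminal datum $\varphi$ and source $g$. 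Consequently, the limit $f$ of your PDE solutions $f_n$ is not some unknown $f$ with $f(0)\ne\varphi$; it is exactly $\varphi$ at every time, so $\langle f(0),v_0\rangle=\langle\varphi,v_0\rangle$ automatically. This observation — stated explicitly in the paper's proof right after choosing $\varphi\in C_c^\infty$ — is what makes the whole argument close. Missing it leaves the proof incomplete, because the confused heuristic you offer in that paragraph (differentiating in $r$, or ``reading both sides via $v$'') does not actually yield \eqref{eq:testPDE}.

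The paper's implementation is also tidier than yours in two respects. First, it keeps $\varphi$ fixed in $C_c^\infty$ (which is enough, $C_c^\infty$ being dense in $\mathcal S$) and approximates only $b$ by $b^n$, setting $g^n:=\tfrac12\Delta\varphi+\nabla\varphi\,b^n\in C_T\bar{\mathcal C}_c^{0+}$; one layer of approximation suffices, whereas you propose approximating $\varphi$ and then also mollifying $g_n$, a double limit that you do not control. Second, the paper isolates and proves the duality identity $\langle\nabla\varphi\,b(s)\,v(s),\chi_K\rangle=\langle\nabla\varphi,b(s)\,v(s)\rangle$ (via $b^n\to b$ and continuity of the Bony product, using $v\in C_T\mathcal C^{\beta+}$), which is needed to match the form of \eqref{eq:testPDE}; you mention this pairing but do not justify the exchange. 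I would rewrite the argument along these lines: fix $\varphi\in C_c^\infty$, note $\varphi$ solves $\mathcal L\varphi=g$ with $g=\tfrac12\Delta\varphi+\nabla\varphi\,b$, set $u^n$ to solve $\mathcal L u^n=g^n$, $u^n(T)=\varphi$, with $g^n:=\tfrac12\Delta\varphi+\nabla\varphi\,b^n$; then $u^n\in\mathcal D_{\mathcal L}$, the expectation of the martingale property gives a three-term identity in $u^n$ and $v$, the PDE continuity results give $u^n\to\varphi$ in $C_T\mathcal C^{(1+\beta)+}$, and after justifying the dual pairing as above one obtains \eqref{eq:testPDE}.
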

Notice that the product $v (s) b(s) $ appearing in the last integral is well-defined using pointwise products \eqref{eq:bony}. We remark that the solution $v$ is the unique solution of \eqref{eq:testPDE} by \cite[Theorem 3.7 and Proposition 3.2]{issoglio_russoMK}.

\begin{proof}
  It is enough to show the claim for all $\varphi \in C_c^\infty$.
  Indeed $C_c^\infty$ is  dense in $\mathcal S $. Since $\varphi\in C_c^\infty \subset \mathcal D^0_{\mathcal L}$, then  we  can apply the operator $\mathcal L$ defined in Definition \ref{def:L} to $\varphi$,  and we define $\mathcal L \varphi =:g $. Clearly $\varphi$ is a weak solution of the PDE $\mathcal L \varphi =g$ with terminal condition $\varphi$. Moreover the function $\varphi$ is time independent by construction.
Using the definition of $\mathcal L$ we get for all $s\in[0,T]$ that 
\begin{equation}\label{eq:Lphi}
(\mathcal L \varphi)(s) = \tfrac12 \Delta \varphi + \nabla \varphi \,  b(s) 
\end{equation}
in  $\mathcal C^{-\beta}$ (having used  the regularity of $\varphi$ and  the pointwise product \eqref{eq:bony}). In fact since  $t \mapsto b(t, \cdot)\in \mathcal C^{-\beta}$ is a continuous function of time by \eqref{eq:bonyt} we have that  $\mathcal L\varphi \in C_T\mathcal C^{-\beta}$.

We now construct  a sequence  $(g^n) \in C_T\mathcal C^{0+}$ that converges to $g$ in $C_T\mathcal C^{-\beta}$ and that is compactly supported. 
Let $(b^n)$ be the sequence
defined  before \eqref{eq:lambda},
in particular it converges to $b$ in   $C_T\mathcal C^{-\beta}$ and let us define
$ g^n:= \tfrac12 \Delta \varphi + \nabla \varphi\, b^n$. Then clearly $g^n\in C_T\mathcal C^{0+}$ (in fact it is more regular) and 
\[
\|g-g^n\|_{C_T \mathcal C^{-\beta}} = \| \nabla \varphi\, (b-b^n)\|_{C_T \mathcal C^{-\beta}} \leq \|\nabla \varphi\|_{C_T \mathcal C^{\beta+}} \|b-b^n\|_{C_T \mathcal C^{-\beta}},
\]
and the right-hand side goes to 0 as $n\to\infty$.
 Moreover, denoting by $K$ the compact support of $\varphi$, we have that also $g^n$ is supported on $K$.

Let us denote by $u^n$ the  mild  solution of $\mathcal L u^n  = g^n, \, u^n(T) = \varphi$, which exists and is unique in $C_T \mathcal C^{(1+\beta)+}$, see Section \ref{ssc:pointwise}. 
Such function belongs to $\mathcal D_{\mathcal L}$ by  definition of the domain $\mathcal D_{\mathcal L}$, see \eqref{eq:D}. 
Since $u^n \in \mathcal D_{\mathcal L}$ and $(X, \mathbb P)$ is a solution to the   martingale problem  with distributional
drift $b$ and initial condition $\mu$ with density $v_0$, then we know that $$u^n(t, X_t) - u^n(0, X_0) - \int_0^t \mathcal L u^n (s, X_s) \di s$$ is a local martingale under $\mathbb P$, but also a true martingale since $u^n$ and $\mathcal L u^n$ are bounded. We denoted by $v(t, \cdot)$  the law density of $X_t$, thus taking the expectation under $\mathbb P$ we have
\begin{align}\label{eq:Lu^n}
\int_{\R^d} u^n(t, x) v(t, x) \di x &- \int_{\R^d} u^n(0, x) v_0(x) \di x - \int_0^t \int_{\R^d}( \mathcal Lu^n) (s, x)  v(s, x) \di x \di s =0.
\end{align}

We now consider a smooth function $\chi_{K}\in C_c^\infty $ such that $\chi_{K} = 1$ on $K$.  Since $g^n$ is compactly supported on $K$ and $\mathcal Lu^n = g^n$, we can rewrite the double integral in \eqref{eq:Lu^n} as
\begin{align*}
\int_0^t \int_{\R^d}( \mathcal Lu^n) (s, x)  v(s, x) \di x \di s = & \int_0^t \int_{\R^d}( \mathcal Lu^n) (s, x)  v(s, x) \chi_{K}(x)\di x \di s\\
= & \int_0^t \langle ( \mathcal Lu^n) (s)  v(s),  \chi_{K}\rangle \di s,
\end{align*}
where the dual pairing is in $\mathcal S, \mathcal S'$.
By  continuity properties of  the PDE $\mathcal L u^n = g^n$ with terminal  condition $u^n(T)=\varphi$  (see Section \ref{ssc:pointwise})  we know that
since $g^n \to g $ in $C_T \mathcal C^{-\beta}$ then  $u^n \to \varphi $ in $C_T \mathcal C^{(1+\beta)+}$,  since  $\varphi$ is the unique  solution of $\mathcal L \varphi = g$ with terminal condition $u(T)= \varphi$. 
 Thus taking the limit as $n\to\infty$ of the above dual pairing we get
\begin{align}\label{eq:chi}
\lim_{n\to \infty} \int_0^t \langle ( \mathcal Lu^n) (s)  v(s),  \chi_{K}\rangle \di s  &= \int_0^t \langle ( \mathcal L\varphi) (s)  v(s),  \chi_{K}\rangle \di s\\ \nonumber
& = \int_0^t \langle \frac12\Delta \varphi \,  v(s),  \chi_{K}\rangle +\langle \nabla \varphi \, b(s)  v(s),  \chi_{K}\rangle  \di s  \\ \nonumber
& = \int_0^t \langle \frac12\Delta \varphi   , v(s) \rangle \di s +\int_0^t \langle \nabla \varphi \, b(s)  v(s),  \chi_{K}\rangle  \di s .
\end{align}
Now we prove that the latter dual pairing in \eqref{eq:chi} can be rewritten as 
\begin{equation}\label{eq:dualpair}
\langle \nabla \varphi \, b(s)  v(s),  \chi_{K}\rangle 
= \langle \nabla \varphi  , b(s)  v(s) \rangle,
\end{equation}
for all $s\in[0,T]$.
Indeed, the LHS of \eqref{eq:dualpair} is well-defined because $\chi_{K}\in C_c^\infty$ and  for every $s\in[0,T]$ the distribution $\nabla \varphi \, b(s)  v(s)$ is actually an element of $\mathcal C^{-\beta}$ because of the
pointwise product \eqref{eq:bony} and of the regularity $v(s)\in \mathcal C^{\beta+}$ and $b(s) \in \mathcal C^{-\beta}$. The RHS of \eqref{eq:dualpair} is also well-defined, but now the test function is $\nabla \varphi \in C_c^\infty$ and the distribution is $b(s)  v(s)$. 
To show that  \eqref{eq:dualpair} holds  we observe that by the continuity of the product \eqref{eq:bonyt} we have   $ b^n(s)  v(s) \to  b(s)  v(s)$ in $\mathcal C^{-\beta}$ (in fact uniformly in $s\in[0,T]$) and thus we can write
\begin{align*}
\langle \nabla \varphi \, b(s)  v(s),  \chi_{K}\rangle & = \lim_{n \to \infty} \langle \nabla \varphi\, b^n(s)  v(s),  \chi_{K}\rangle \\
&= \lim_{n \to \infty} \int_{\R^d} \nabla \varphi(x) b^n(s,x)  v(s,x) \chi_{K}(x) \di x\\
   &= \lim_{n \to \infty} \int_{\R^d} \nabla \varphi (x) b^n(s,x)  v(s,x)\di x\\
    &= \lim_{n \to \infty} \langle \nabla \varphi , b^n(s)  v(s) \rangle\\
&=\langle \nabla \varphi , b(s)  v(s) \rangle,
\end{align*}
for all $s\in[0,T]$, which proves \eqref{eq:dualpair}. 

To conclude it is enough to take the limit as $n\to \infty$ in \eqref{eq:Lu^n} and use \eqref{eq:chi} and \eqref{eq:dualpair} to get \eqref{eq:testPDE}.
\end{proof}

The following is a continuity result for the martingale problem. Recall that $(b^n)$ is the sequence defined before \eqref{eq:lambda} in Section \ref{ssc:pointwise},
so we know that $b^n\to b$ in $C_T\mathcal C^{-\beta}$, $b^n \in C_T \mathcal C^\gamma$ for all $\gamma \in \R$ and $b^n$ is bounded and Lipschitz. We denote by $X^n$ the (strong) solution to the SDE 
\begin{equation}\label{eq:Xn}
X_t^n = X_0 + \int_0^t b^n(s, X_s^n) \di s + W_t,
\end{equation}
where $X_0\sim\mu$.

\begin{theorem}\label{thm:tight}
  Let Assumptions \ref{ass:param-b} hold.
Let $(b^n)$ be a sequence in $C_T\mathcal C^{(-\beta)+}$ converging to $b $ in $C_T\mathcal C^{-\beta}$. Let $(X, \mathbb P)$ (respectively $(X^n, \mathbb P^n)$) be a solution  to the  MP with distributional drift $b$ (respectively $b^n$) and initial condition $\mu$.
Then the sequence $(X^n, \mathbb P^n)$  converges in law to  $(X, \mathbb P)$. In particular, if $b^n \in C_T\mathcal C^{0+}$  and $X^n$ is a strong solution of 
\begin{equation*}
X^n_t = X_0 + \int_0^t b^n(s, X^n_s) \di s + W_t,
\end{equation*}
then $X^n$ converges to $(X, \mathbb P)$ in law. 
\end{theorem}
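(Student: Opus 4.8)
The strategy is to transfer everything to the transformed processes $Y$ and $Y^n$, where the coefficients are genuine bounded functions, exploit the tightness already established in Lemma \ref{lm:Yntight}, and then pull back through $\psi$. First I would set $Y^n_t := \phi^n(t, X^n_t)$ and $Y_t := \phi(t, X_t)$; by Theorem \ref{thm:mart} (and Lemma \ref{lm:strongMP} in the regularized case), $(Y^n,\mathbb P^n)$ solves the SDE \eqref{eq:Yn} and $(Y,\mathbb P)$ solves \eqref{eq:SDEY}, both with initial law $\nu = \mu(\psi(0,\cdot))$ — note all $Y^n$ and $Y$ share the same initial distribution since $Y^n_0 = \phi^n(0,X_0)$ and we may realize them on a common space with $X_0\sim\mu$. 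By Lemma \ref{lm:Yntight} the family $(Y^n)_n$ is tight, so along any subsequence $Y^{n_k}$ converges in law to some limit process $\bar Y$.

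The core step is to identify $\bar Y$ as a solution of \eqref{eq:SDEY}. For this I would pass to the limit in the martingale problem characterization: for each $\tilde f \in C^{1,2}_{buc}$ the process $\tilde f(Y^n_t) - \tilde f(Y^n_0) - \int_0^t (\tilde{\mathcal L}^n \tilde f)(s,Y^n_s)\,\di s$ is a martingale, where $\tilde{\mathcal L}^n$ is the generator of \eqref{eq:Yn} with coefficients $\lambda(\mathrm{id}-\psi^n)$ and $\nabla\phi^n\circ\psi^n$. Using Skorokhod representation I would replace convergence in law by a.s.\ uniform convergence on compacts, and then use the uniform convergence $\phi^n\to\phi$, $\psi^n\to\psi$, $\nabla\phi^n\to\nabla\phi$ on $[0,T]\times\R^d$ (recalled in Section \ref{ssc:pointwise}, see also Lemma \ref{lm:Yntight}), together with the uniform bounds from Remark \ref{rm:Yn}, to show $(\tilde{\mathcal L}^n\tilde f)(s,Y^{n}_s) \to (\tilde{\mathcal L}\tilde f)(s,\bar Y_s)$ and hence that $\bar Y$ solves the Stroock–Varadhan martingale problem for $\tilde{\mathcal L}$. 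The dominated convergence needed here is justified by the uniform-in-$n$ boundedness of the coefficients. By the uniqueness in law for \eqref{eq:SDEY} with initial law $\nu$ — which holds by Remark \ref{rm:SW} applied after conditioning on $Y_0$, together with a standard disintegration argument — the limit $\bar Y$ has the law of $Y$, so the whole sequence $Y^n$ converges in law to $Y$.

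Finally I would pull back: since $X^n_t = \psi^n(t,Y^n_t)$ and $X_t = \psi(t,Y_t)$, and $\psi^n\to\psi$ uniformly on $[0,T]\times\R^d$ while $\psi$ is continuous, the map taking a path $y$ to the path $t\mapsto\psi(t,y_t)$ is continuous on $\mathcal C$, and $\psi^n(t,y^n_t)\to\psi(t,y_t)$ whenever $y^n\to y$ uniformly on compacts; a continuous-mapping-type argument (again via Skorokhod representation) then yields $X^n\to X$ in law. The second assertion, for $b^n\in C_T\mathcal C^{0+}$ with $X^n$ a strong solution, is the special case where Lemma \ref{lm:strongMP} guarantees that $X^n$ is a solution of the MP with drift $b^n$, so it is covered by the general statement.

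The main obstacle I anticipate is the passage to the limit in the term $\int_0^t(\tilde{\mathcal L}^n\tilde f)(s,Y^n_s)\,\di s$: one must control simultaneously the convergence of the (merely Hölder-continuous, $n$-dependent) coefficients $\nabla\phi^n\circ\psi^n$ and $\psi^n$ and the convergence of the processes $Y^n$, and ensure the limit genuinely lands on $(\tilde{\mathcal L}\tilde f)(s,\bar Y_s)$ rather than on some spurious object; the uniform convergence of $\phi^n,\psi^n,\nabla\phi^n$ from \cite{issoglio_russoPDEa} together with the uniform coefficient bounds of Remark \ref{rm:Yn} are exactly what makes this work, but the bookkeeping must be done carefully. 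A secondary subtlety is handling the random (rather than deterministic) initial condition in the uniqueness step, which requires conditioning on $X_0$ and invoking uniqueness in law for each deterministic starting point.
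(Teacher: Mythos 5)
Your proposal is correct and takes essentially the same approach as the paper, which outsources the tightness--identification--pullback argument to Steps 4 and 5 of \cite[Proposition 29]{flandoli_et.al14}; you reconstruct exactly those steps (tightness of $Y^n$ from Lemma \ref{lm:Yntight}, Skorokhod representation plus uniform convergence of $\psi^n,\nabla\phi^n$ and the uniform bounds of Remark \ref{rm:Yn} to identify the limit as the unique solution of \eqref{eq:SDEY}, then pull-back through $\psi$). One small imprecision: the claim that $Y^n$ and $Y$ share the same initial distribution holds if one takes $Y^n_0=\phi(0,X_0)$ as written in \eqref{eq:Yn}, but if $Y^n_0=\phi^n(0,X_0)$ (as used in the proof of Lemma \ref{lm:Yntight}) the laws differ for finite $n$ and only converge weakly since $\phi^n\to\phi$ uniformly --- which is all your argument actually needs.
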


\begin{proof} 
The proof is identical to that of \cite[Proposition 29]{flandoli_et.al14}. In particular Step 4 therein deals with the convergence in law of $Y^n$, which is the solution of SDE \eqref{eq:Yn}, and Step 5 with the convergence in law of $X^n$. 
Notice that the drift $b$ therein lives in a different 
space than ours (Bessel potential spaces instead of H\"older-Besov spaces), and the initial condition in \cite{flandoli_et.al14} is deterministic, but the setting is otherwise the same.  
The only tools used in Step 4 and 5 are the tightness of the sequence
of laws of $Y^n$,
which we proved in Lemma \ref{lm:Yntight}, and  the uniform convergence of $u^n\to u, \nabla u^n \to \nabla u$ and $\psi^n \to \psi$, see Section \ref{ssc:pointwise}. 
Finally  setting $X_t:= \psi(t, Y_t)$ for $t\in[0,T]$, then $(X, \mathbb P)$ is the unique solution to the martingale problem with distributional drift $b$ and initial condition $\mu$ by Theorem \ref{thm:mart},  
because $(Y, \mathbb P)$ is the unique solution to \eqref{eq:SDEY} with initial condition $Y_0 \sim \nu$ where  $\nu $ is the pushforward measure of $\mu$ through $\phi$.

It remains to prove the last claim of the theorem, which follows because  $X^n$ is also a solution to the MP with distributional drift $b^n$  by  Lemma \ref{lm:strongMP}, so the first part of the theorem can be applied.
\end{proof}

The first existence and uniqueness result is  for the solution to the MP with distributional drift $b$ and deterministic initial condition $X_0 =x$.  We will extend the result to any random variable in Theorem  \ref{thm:X0} below.

\begin{proposition}\label{pr:mart}
 The martingale problem with distributional drift $b$ and i.c.\ $\delta_x$, for $x\in\R^d$, admits existence and uniqueness according to Definition \ref{def:MP}. 
 \end{proposition}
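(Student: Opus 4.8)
The plan is to reduce the martingale problem for $X$ with deterministic initial condition $\delta_x$ to the classical Stroock--Varadhan martingale problem for the transformed process $Y$ via Theorem~\ref{thm:mart}, where existence and uniqueness in law are already available. Concretely, set $y := \phi(0,x)$ and consider SDE~\eqref{eq:SDEY} with deterministic initial condition $Y_0 = y$. By Remark~\ref{rm:SW}, this SDE admits existence and uniqueness in law: the drift $\lambda(\mathrm{id}-\psi)$ and the diffusion coefficient $\nabla\phi\circ\psi$ are bounded (Remark~\ref{rm:Yn}), the diffusion coefficient is continuous since $\nabla\phi$ and $\psi$ are continuous, and it is uniformly non-degenerate because $\|\nabla u\|_\infty \le \tfrac12$ forces $\nabla\phi = I_d + \nabla u$ to be invertible with uniformly bounded inverse. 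Hence there is a probability space carrying a Brownian motion $W$ and a process $Y$ solving~\eqref{eq:SDEY}, and its law is unique.

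For existence, take such a solution $(Y,\mathbb P)$ with $Y_0 = y = \phi(0,x)$, which means $Y_0 \sim \nu$ with $\nu = \delta_y = \delta_{\phi(0,x)}$, so that the pushforward $\nu(\phi(0,\cdot))$ equals $\delta_x$. Applying Theorem~\ref{thm:mart}(ii), the process $X_t := \psi(t,Y_t)$ gives a couple $(X,\mathbb P)$ that solves the MP with distributional drift $b$ and initial condition $\mu = \delta_x$ (note $X_0 = \psi(0,Y_0) = \psi(0,\phi(0,x)) = x$ a.s.). This establishes existence.

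For uniqueness, suppose $(X^1,\mathbb P^1)$ and $(X^2,\mathbb P^2)$ are two solutions of the MP with i.c.\ $\delta_x$. By Theorem~\ref{thm:mart}(i), the processes $Y^i_t := \phi(t,X^i_t)$, $i=1,2$, are solutions in law of~\eqref{eq:SDEY} with initial condition $Y^i_0 = \phi(0,x) = y$ a.s., i.e.\ $Y^i_0 \sim \delta_y$. By the uniqueness in law for~\eqref{eq:SDEY} from Remark~\ref{rm:SW}, the laws of $Y^1$ under $\mathbb P^1$ and $Y^2$ under $\mathbb P^2$ coincide. Since $X^i_t = \psi(t,Y^i_t)$ with $\psi$ a deterministic (jointly continuous) map, the laws of $X^1$ and $X^2$ coincide as well, which is exactly the uniqueness statement of Definition~\ref{def:MP}.

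The only genuinely delicate points are bookkeeping ones rather than deep obstacles: one must make sure the pushforward relations between $\mu = \delta_x$ and $\nu = \delta_{\phi(0,x)}$ are applied in the correct direction in both halves of Theorem~\ref{thm:mart}, and one must confirm that the continuity of $\psi$ in $(t,y)$ (from $\psi \in C^{0,1}$, Section~\ref{ssc:pointwise}) indeed makes $Y \mapsto \psi(\cdot,Y_\cdot)$ a measurable and law-determining transformation on path space. I expect the main subtlety, if any, to be checking that the hypotheses of Remark~\ref{rm:SW} are met with the fixed $\lambda$ chosen in~\eqref{eq:lambdan}, but this is precisely what Remarks~\ref{rm:Yn} and~\ref{rm:SW} record, so the proof is essentially an assembly of already-established facts.
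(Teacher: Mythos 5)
Your proof is correct and follows essentially the same route as the paper: transfer existence and uniqueness for the MP to the transformed SDE~\eqref{eq:SDEY} with deterministic initial condition $y=\phi(0,x)$ via the two directions of Theorem~\ref{thm:mart}, and invoke Remark~\ref{rm:SW} for existence and uniqueness in law of~\eqref{eq:SDEY}. You spell out the pushforward bookkeeping and the law-transfer through the deterministic map $\psi$ a bit more explicitly than the paper does, but the argument is the same.
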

 
 \begin{proof}
 Let $(X, \mathbb P)$ be a solution to the MP. 
Setting $Y_t = \phi (t, X_t)$  and $Y_0 = y:=\phi(0,x)$, by Item (i) of Theorem \ref{thm:mart} we have that 
 $(Y, \mathbb P)$ is a solution in law to \eqref{eq:SDEY}. By Remark  \ref{rm:SW} the solution $ (Y, \mathbb P)$ is unique,  hence the law of $X $ under $\mathbb P$ is uniquely determined.
  
   Existence follows from the fact that equation \eqref{eq:SDEY} with $Y_0=y$ has a solution in law, say $(Y, \mathbb P)$, again by Remark  \ref{rm:SW}. Then setting $X_t := \psi(t, Y_t)$ by Item (ii) of Theorem \ref{thm:mart} we know that $(X, \mathbb P)$ is a solution in law to MP with distributional drift $b$ and i.c.\ $\delta_x$.
\end{proof}

Next we  extend the existence and uniqueness result of Proposition  \ref{pr:mart} to the general case when the  initial condition $X_0$  is a random variable rather than a deterministic point.

\begin{theorem}\label{thm:X0}
Let Assumption \ref{ass:param-b} hold and let  $\mu$ be a probability measure on $\R^d$. Then there exists a unique solution $(X, \mathbb P)$ to the martingale problem with distributional drift $b$ and initial condition $\mu$.
\end{theorem}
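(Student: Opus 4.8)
The plan is to reduce the general random initial condition to the already-treated deterministic case via a disintegration/conditioning argument, exactly as one does in the classical Stroock–Varadhan theory. First I would address \emph{existence}: let $\mathbb P_x$ denote the law on $\mathcal C$ of the solution to the martingale problem with distributional drift $b$ and initial condition $\delta_x$, whose existence and uniqueness is guaranteed by Proposition \ref{pr:mart}. The key technical point is that $x \mapsto \mathbb P_x$ is measurable as a map from $\R^d$ into the space of probability measures on $(\mathcal C, \mathcal B(\mathcal C))$; I would obtain this by transferring the question through the Zvonkin map, since by Theorem \ref{thm:mart} we have $\mathbb P_x = (\psi(0,\cdot)_*\,\cdot)$-pushforward relation with the law $\mathbb Q_y$ of the solution $Y$ to the nondegenerate SDE \eqref{eq:SDEY} started at $y=\phi(0,x)$, and measurability of $y \mapsto \mathbb Q_y$ is standard for uniformly nondegenerate, bounded-coefficient martingale problems (e.g.\ \cite[Exercise 6.7.4 or Theorem 6.3.2]{stroock_varadhan}); the maps $\phi(0,\cdot)$ and $\psi(0,\cdot)$ are continuous, so measurability passes to $x\mapsto \mathbb P_x$. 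Then I would define $\mathbb P := \int_{\R^d} \mathbb P_x\, \mu(\di x)$ on $(\mathcal C, \mathcal B(\mathcal C))$ and verify that $(X,\mathbb P)$ (with $X$ the canonical process) solves the martingale problem: $X_0\sim\mu$ is immediate, and for $f\in\mathcal D_{\mathcal L}$ the process in \eqref{eq:MP} is a local martingale under each $\mathbb P_x$, hence — after a localization argument to handle the fact that local martingales do not mix linearly without care — a local martingale under the mixture $\mathbb P$. In practice I would first check it is a genuine martingale after stopping by the exit times of large balls (using that $f$, $\nabla f$ and $\mathcal L f$ are bounded on $\mathcal D_{\mathcal L}$ since $f\in C_T\mathcal C^{(1+\beta)+}$ and $\mathcal L f = g \in C_T\bar{\mathcal C}_c^{0+}$), and then the mixture of martingales is a martingale.

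For \emph{uniqueness}, let $(X,\mathbb P)$ be any solution with $X_0\sim\mu$. The standard route is to show that $\mathbb P$ admits a regular conditional probability distribution $\mathbb P(\cdot \mid X_0 = x)$ which, for $\mu$-a.e.\ $x$, solves the martingale problem started at $\delta_x$; by Proposition \ref{pr:mart} this conditional law must equal $\mathbb P_x$, and then $\mathbb P = \int \mathbb P_x\,\mu(\di x)$ is forced. Again the cleanest way to run the conditioning argument is to push everything through $\phi$ to the process $Y_t = \phi(t,X_t)$, which by Theorem \ref{thm:mart}(i) solves the classical Stroock–Varadhan martingale problem for $\tilde{\mathcal L}$ with $Y_0\sim\nu=\mu(\psi(0,\cdot))$; uniqueness of the disintegration for such classical martingale problems is exactly \cite[Theorem 6.2.3 / 6.1.3]{stroock_varadhan}, giving $\mathrm{Law}(Y) = \int \mathbb Q_y\,\nu(\di y)$, and then Theorem \ref{thm:mart}(ii) together with the continuity of $\psi$ transfers this back to pin down $\mathrm{Law}(X)$ uniquely.

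The main obstacle I expect is the measurability of $x \mapsto \mathbb P_x$ (equivalently $y\mapsto\mathbb Q_y$) and the careful handling of the conditioning step — these are the places where one genuinely needs the nondegenerate, bounded-coefficient structure of the transformed SDE \eqref{eq:SDEY} rather than anything about the singular drift $b$. Everything else (the localization making \eqref{eq:MP} a true martingale, the fact that mixtures of martingales are martingales, and the transfer back and forth through $\phi,\psi$) is routine given the results already established, in particular Theorem \ref{thm:mart}, Proposition \ref{pr:mart} and Remark \ref{rm:SW}. I would therefore expect the proof to be short, essentially citing \cite{stroock_varadhan} for the classical disintegration facts about $(Y,\mathbb Q)$ and invoking Theorem \ref{thm:mart} to move between the $X$-picture and the $Y$-picture.
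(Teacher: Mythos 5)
Your proposal is correct, and the existence half is essentially the paper's own argument: the paper also constructs $\mathbb P$ as $\int \mathbb P^y(\cdot)\,\nu(\di y)$ using the random kernel $y\mapsto \mathbb P^y$ from \cite[Theorem 7.1.6]{stroock_varadhan} for the transformed process $Y$ (so measurability is, as you predicted, obtained at the level of $Y$ and transferred back via $\psi$), and the issue of mixing local martingales is handled exactly as you suggest, by noting that $M^f$ is bounded for $f\in\mathcal D_{\mathcal L}$ and hence a true martingale.

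Your uniqueness argument, however, takes a genuinely different route from the paper. You propose to push the entire disintegration/conditioning step through $\phi$: starting from $(X,\mathbb P)$, pass to $(Y=\phi(\cdot,X),\mathbb P)$ via Theorem \ref{thm:mart}(i), invoke the classical Stroock--Varadhan fact that for a nondegenerate bounded-coefficient MP well-posedness for every $\delta_y$ forces the solution with i.c.\ $\nu$ to be $\int \mathbb Q_y\,\nu(\di y)$, and pull back to $X=\psi(\cdot,Y)$. The paper instead runs the conditioning argument \emph{directly at the level of the singular martingale problem}: it uses the Dellacherie--Meyer disintegration of $\mathbb P$ with respect to $X_0$ (its Fact 2), combined with a countability argument that needs (i) a countable dense family in $\mathcal D_{\mathcal L}$, which is Proposition \ref{pr:DLsep}, (ii) a countable dense family of bounded continuous functionals in the sense of Lemma \ref{lm:contFsep}, and (iii) a countable dense family in $C_c(\R)$, to get a single $\mu$-null set outside of which the conditional laws $\mathbb P_i^x$ solve the MP with i.c.\ $\delta_x$, and then Proposition \ref{pr:mart}. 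Your route is shorter and completely sidesteps the need for Proposition \ref{pr:DLsep} and Lemma \ref{lm:contFsep} (whose only role in the paper is to make this conditioning step rigorous), at the cost of relying on the well-known-but-not-always-cleanly-packaged classical disintegration result for Stroock--Varadhan MPs; the exact pointer into \cite{stroock_varadhan} would need some care. The paper's route is more self-contained within the singular framework and incidentally establishes the separability of $\mathcal D_{\mathcal L}$, which is interesting in its own right but not logically needed if one transfers through $\phi$ as you do.
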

\begin{proof}
\emph{Existence.} The idea is to use a {\em superposition} argument in order to glue together the solutions of MP with a deterministic initial condition $x$, for all possible initial conditions $x$.  This is implemented using the process $Y_t= \phi(t, X_t)$. 

We have the measure $\mu$ on $(\mathbb R^d,\mathcal B (\R^d) )$ which is the law of the initial condition $X_0$ and we define a new measure $\nu$ on the same space given by  
$\nu(B)= \mu(\psi(0, B))$ for any $B \in \mathcal B (\R^d) $. Notice that
$\nu$ is the pushforward  of $\mu$ through the function $\phi$, where $\psi= \phi^{-1}$ has been defined in \eqref{eq:psi}, thus $\nu$ plays the role of the initial condition for the process $\phi(t, X_t)$.
Let $Y$ be the canonical process and $\mathbb P^y$ be a law of the canonical process on $ \mathcal C_T $ such that $(Y,\mathbb P^y)$ is the unique weak solution to \eqref{eq:SDEY} with $Y_0 =y$. 
Then it is known by \cite[Theorem 7.1.6]{stroock_varadhan} that $ (y, C)\mapsto \mathbb P^y(C)$ is a random kernel for $y\in \R^d$ and $C\in \mathcal B(\mathcal C_T )$, hence the probability  $\mathbb P$ given by
\begin{equation}\label{eq:P}
\mathbb P(C):= \int \mathbb P^y (C)\nu(\di y)
\end{equation} is well-defined. 
Setting  $X_t := \psi(t, Y_t)$, our candidate solution to the MP with distributional drift $b$ and initial condition $\mu$ is $(X, \mathbb P) $. 
First we observe that for any $C \in \mathcal B(\mathcal C_T)$  of the form $C= \{\omega: \omega_0 \in B\} $  with some $B\in \mathcal B(\R^d)$, we have
\begin{equation}\label{eq:1B}
\mathbb P^y(C) = \mathbb P^y (\omega\in C) =   \mathbb P^y (Y_0 \in B) = \mathbbm {1}_{B}(y),
\end{equation}
having used that  $\mathbb P^y$-a.s.\ the canonical process $Y$ is such that $Y_0=y$. 
This will allow us to show that the initial condition $X_0$ has law $\mu$. Indeed, for any $A\in \mathcal B(\R^d)$ we set $B = \phi(0, A)$  and  we calculate
\begin{align}\label{eq:ic1}
\mathbb P(X_0 \in A) &=  \mathbb P(\psi(0, Y_0) \in A)
= \mathbb P(Y_0  \in \phi(0,A))
= \mathbb P(Y_0  \in B).
\end{align} 
Now using the definition \eqref{eq:P} of $\mathbb P$  and setting     $C= \{Y_0 \in B\} $ we have $\mathbb P(Y_0  \in B)= \mathbb P(C) = \int \mathbb P^y (C) \nu(\di y) $ and by \eqref{eq:1B} we have
\begin{equation}\label{eq:ic2}
\mathbb P(Y_0  \in B) =  \int_{B} \nu(\di y)=  \nu (B)= \nu ( \phi(0, A)).
\end{equation}
Finally using the definition of $\nu$  and the fact that  $ \psi$ is the inverse of $\phi$  we have $\mathbb P(X_0 \in A)= \mathbb P(C)=\mu (A)$ as wanted.

Next we show that for every $f\in \mathcal D_{\mathcal L}$ the process  
 \begin{equation}\label{eq:MfX}
 M^{f}_u (X): = f(u, X_u) - f(0, X_0) - \int_0^u (\mathcal L f)(r, X_r) \di r,
 \end{equation} 
is a martingale under $\mathbb P$, that is for every $f\in \mathcal D_{\mathcal L}$ and $F_s$ bounded and continuous functional on $\mathcal C_s$ (see Section \ref{ssc:prob-notation}) we have
\begin{equation*}\label{eq:equivMP}
\mathbb E [M^{f}_t(X) F_s(X)] =  \mathbb E [M^{f}_s (X) F_s(X)],
\end{equation*}
for all $0\leq s\leq t\leq T$. Indeed we notice that under $\mathbb P^y$ we have $Y_0\sim \delta_y$ hence $X_0\sim \delta_{\psi(0, y)}=:\delta_x$. 
Moreover $(Y,\mathbb P^y)$ is a solution of  \eqref{eq:SDEY} with i.c. $Y_0=y$, hence by Theorem \ref{thm:mart} part (ii) we have that $(X_\cdot:= \psi(\cdot, Y_\cdot),\mathbb P^y)$ is a solution to the MP with distributional drift $b$ and i.c. $X_0 \sim \delta_x$ thus by the definition of $\mathbb P$ given in  \eqref{eq:P}  we get
\begin{align*}
\mathbb E [(M^{f}_t (X)-M^{f}_s(X)) F_s(X)] &= \int 
\mathbb E^{y} [(M^{f}_t(X) -M^{f}_s(X)) F_s(X)] \nu (\di y) =  0,
\end{align*}
where we denoted by $\mathbb E^y$ the expectation under $\mathbb P^y$.

{\em Uniqueness.} Here the idea is to use {\em disintegration} in order to reduce the MP to MPs with deterministic initial condition. We proceed by stating and proving two preliminary facts.

\begin{description}
\item [\bf Fact 1]  Let  $E^1$ be a dense countable set in  $C_c(\R)$,  $E^2$   be a dense countable set  in $\mathcal D_{\mathcal L}$ and $E^{ \mathcal C_s}$ be a countable set of bounded continuous functionals such that for every bounded continuous functional $F_s \in \mathcal C_s$ there exists a sequence $(F_s^n)\subset E^{ \mathcal C_s}$ such that $F^n_s \to F_s$ in a pointwise uniformly bounded way, see \eqref{eq:contFsep}. 
 A couple $(X, \mathbb P)$ is a solution to the MP with
  distributional drift $b$ and initial condition $X_0$ if and only if 
\begin{equation}\label{eq:Mf}
\mathbb E [M^{f}_t(X) F_s(X) g(X_0)] =  \mathbb E [M^{f}_s(X) F_s(X)g(X_0)],
\end{equation}
 for every $f\in E^2, F_s\in E^{\mathcal C_s}, g\in E^1$ and $s<t$ with $s,t \in \mathbb Q \cap [0,T]$, where    $ M^{f}_u (X)$ is given by \eqref{eq:MfX}.

 This fact  can be seen as follows. First we notice that, since $M^f$ are bounded processes, if $M^f$ is a local martingale, then it is also a true martingale, and hence  the MP with distributional drift is equivalent to  \eqref{eq:Mf} for all $f\in \mathcal D_{\mathcal L}$, $F_s\in \mathcal C_s$ and $g\in C_c$ and $s<t$ with $s,t \in [0,T]$.  
 
 Next one can show that this is equivalent when choosing    $s<t,
 s,t \in \mathbb Q \cap [0,T]$. Indeed for any bounded and continuous functional $F_s$  on $ \mathcal C_s$,
 for a sequence  of rational times $s_n \downarrow s$ with $s<s_n<t$,
 we can associate a sequence of bounded and continuous  functionals $F_{s_n} $ on $\mathcal C_{s_n}$ by setting $F_{s_n}(\eta):= F_s (\eta\vert_{[0,s]})$ for $\eta \in \mathcal C_{s_n}$.
 This allows to replace the condition $s\in \mathbb Q\cap [0,T]$ with $s\in [0,T]$.  In order to replace $t \in \mathbb Q\cap [0,T]$ with $t\in [0,T]$ we choose a rational sequence $t_n \in (s, T] $ such that $t_n \to t$ and use the fact that the local martingale $t \mapsto M^f_t$ is continuous and  Lebesgue dominated convergence theorem.
  
Finally we use again
 Lebesgue dominated convergence theorem to see  the validity of \eqref{eq:Mf}  for all $f\in \mathcal D_{\mathcal L}$, $F_s\in \mathcal C_s$ and $g\in C_c$  and $s<t$ with $s,t \in \mathbb Q \cap [0,T]$. 
 
  We remark that   $E^1$ exists because $C_c$ is separable by  \cite[Lemma 5.7 (ii)]{issoglio_russoPDEa},  $E^2$ exists because  $\mathcal D_{\mathcal L}$ is separable by Proposition \ref{pr:DLsep} and  $E^{\mathcal C_s}$ exists by Lemma \ref{lm:contFsep}, whose statement and proof has been postponed at the end of this section.
 
\item [\bf Fact 2] Let $(X, \mathbb P)$ be a solution to the MP with
  distributional drift $b$ and i.c.\ $\mu$. There exists a random kernel $\mathbb P^{x}$ such that $\mathbb P = \int \mathbb P^{x} \di \mu (x)$,  where for $\mu$-almost all $x\in \R^d$, $\mathbb P^{x}$ lives on $\{\omega\in \Omega: X_0(\omega)=x\}$ and for any bounded and continuous functional $G: C[0,T] \to \R$ we have
\begin{equation}\label{eq:EiFX}
\mathbb E (G(X))  
= \int_{\R^d} \mathbb E^{x}( G(X)) \di \mu (x),
\end{equation} 
where $\mathbb E$ and $\mathbb E^{x}$ stand for the expectation under $\mathbb P$ and  $\mathbb P^{x}$ respectively.  

This follows from the disintegration theorem in  \cite[Chapter III, nos. 70--72]{dellacherie}. 
\end{description}

We now proceed with the proof of uniqueness. 
 Let $(X^1, \mathbb P_1)$ and $(X^2, \mathbb P_2)$ be two solutions to the MP with distributional drift $b$ and initial condition $X_0 \sim \mu$. Without loss of generality we can suppose that $X^1 = X^2 = X$ is the canonical process on $\Omega = \mathcal C_T$. 
Since $(X^i, \mathbb P_i), i=1,2$ is a solution of the MP, then by Fact 1 we have 
\[
\mathbb E_i [(M^{f}_t(X) -M^{f }_s(X))F_s(X)g(X_0)] = 0,
\]
 for all $0\leq s\leq t\leq T$, $s,t \in \mathbb Q$, $g \in E^1, f \in E^2$ and $ F_s \in E^{\mathcal C_s}$ and $i=1,2$.
We now apply Fact 2 to both $\mathbb P_1$ and $\mathbb P_2$, and in particular \eqref{eq:EiFX} with  $G (\eta) = (M^f_t(\eta) -M^f_t(\eta)) F_s(\eta) g(\eta_0) $ to rewrite the above equality as 
\begin{equation}\label{eq:Ei}
\int_{\R^d} \mathbb E_i^{x} [(M^{f}_t(X) -M_s^f(X))  F_s(  X) g(X_0)] \di \mu (x) =0,
\end{equation}
for all $0\leq s\leq t\leq T$, $s,t \in \mathbb Q$, $g \in E^1, f \in E^2$ and $ F_s \in E^{\mathcal C_s}$ and $i=1,2$.
 Now we recall that for $\mu$-almost all $x$, we have  $ X_0(\omega)=x$,  $\mathbb P^x_i$-a.s., thus equation \eqref{eq:Ei} becomes 
\[
\int_{\R^d} g(x) \mathbb E_i^{x} [(M^{f}_t(X) -M_s^f(X))  F_s(  X)] \di \mu (x) =0,
\]
for every $0\leq s\leq t\leq T$, $s,t \in \mathbb Q$, $g \in E^1, f \in E^2$ and $ F_s \in E^{\mathcal C_s}$ and $i=1,2$.    Since  $g$ is arbitrarily  chosen in a dense set of $C_c(\R)$ then we have
\begin{equation}\label{eq:E1E2}
 \mathbb E_i^{x} [(M^{f}_t(X) -M_s^f(X))  F_s(  X)] =0 \quad \mu\text{-a.e.},
\end{equation} 
for every $0\leq s\leq t\leq T$, $s,t \in \mathbb Q$, $f \in E^2$ and $ F_s \in E^{\mathcal C_s}$ and $i=1,2$. Note that \eqref{eq:E1E2} is true because the sets  $ \mathbb Q \cap [0,T],   E^2 $ and $ E^{\mathcal C_s}$ are countable. 
By Fact 1 this means that  the couple $(X,\mathbb P^{x}_i)$ is a solution to the MP with distributional drift $b$ and initial condition $\delta_x$, for $i=1,2$ for $\mu$-almost all $x$. 
By Proposition \ref{pr:mart}  we have uniqueness of the MP with deterministic initial condition  $\delta_x$   hence    for $\mu$-almost all $x$ we have   $ \mathbb P_1^x = \mathbb P_2^x$. Thus recalling the disintegration $\mathbb P_i = \int \mathbb P_i^{x} \di \mu (x)$ for $i=1,2$ from Fact 2,  we conclude $\mathbb P_1=\mathbb P_2 $ as wanted.
\end{proof}

We conclude the section with the proof of a technical result used in  Fact 1 in the proof of Theorem \ref{thm:X0}.

\begin{lemma}\label{lm:contFsep}
There exists a countable family $ D$ of bounded and continuous functionals from $ C([0,T];\R^d) $ to $\R$ such that any  bounded and continuous functional  $F: C([0,T];\R^d) \to \R$ can be approximated by a sequence $(F_n)\subset D$ in a pointwise uniformly bounded way, that is  
\begin{align}
\nonumber
&F_n \to F \text{ pointwise}\\
&\sup_n \sup_{\eta\in C([0,T];\R^d)} | F_n(\eta)| < \infty. \label{eq:contFsep}
\end{align}
\end{lemma}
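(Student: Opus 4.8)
The plan is to construct the countable family $D$ explicitly by combining a countable dense set of ``nice'' functionals with a truncation argument to handle the uniform boundedness requirement. First I would recall that $C[0,T]$ (equivalently $\mathcal{C}_T = C([0,T];\R^d)$, the path space under uniform convergence on compacts) is a Polish space, hence separable; moreover the space $C_b(\mathcal{C}_T)$ of bounded continuous functionals, while not separable in the sup norm, is separable in a weaker sense adequate for our purposes. Concretely, I would take a countable set $\{t_1, t_2, \ldots\}$ dense in $[0,T]$ (including $0$ and $T$), and for each finite selection $t_{i_1} < \cdots < t_{i_k}$ consider cylindrical functionals of the form $\eta \mapsto h(\eta_{t_{i_1}}, \ldots, \eta_{t_{i_k}})$ where $h$ ranges over a countable dense (in the topology of uniform convergence on compacts, say) subset of $C_b(\R^{dk})$, e.g.\ functions of the form $q(x) \chi_m(x)$ with $q$ a polynomial with rational coefficients and $\chi_m$ a fixed smooth cutoff equal to $1$ on the ball of radius $m$ and supported in the ball of radius $m+1$. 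The union over all $k$, all finite index selections, and all such $h$ is countable; call it $D_0$.

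Next I would show that any bounded continuous $F: \mathcal{C}_T \to \R$ can be approximated pointwise, in a uniformly bounded way, by elements of $D_0$. The key step is a two-stage approximation: (a) approximate $F$ by cylindrical functionals $F \approx F \circ \pi_{N}$ where $\pi_N$ is the piecewise-linear interpolation through the points $\{t_1, \ldots, t_N\}$; since $\pi_N(\eta) \to \eta$ uniformly on $[0,T]$ as $N \to \infty$ for every fixed continuous $\eta$, and $F$ is continuous, $F(\pi_N(\eta)) \to F(\eta)$ pointwise, with $\|F \circ \pi_N\|_\infty \le \|F\|_\infty$ so the uniform bound is preserved; (b) for each fixed $N$, the functional $F \circ \pi_N$ is a bounded continuous function of the finitely many coordinates $(\eta_{t_1}, \ldots, \eta_{t_N}) \in \R^{dN}$, hence can be approximated uniformly on compacts by the countable dense family described above, again keeping the sup bound by truncating appropriately (one multiplies by a cutoff $\chi_m$ that tends to $1$, losing nothing in the pointwise limit while never increasing the supremum beyond $\|F\|_\infty$ up to a fixed multiplicative constant). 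A diagonal argument over $N$ and the approximation index then produces a single sequence $(F_n) \subset D_0$ with $F_n \to F$ pointwise and $\sup_n \|F_n\|_\infty \le C\|F\|_\infty < \infty$, which is exactly \eqref{eq:contFsep}. Setting $D := D_0$ finishes the proof.

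The main obstacle, and the place requiring the most care, is reconciling pointwise convergence with the uniform boundedness constraint $\sup_n \sup_\eta |F_n(\eta)| < \infty$: a naive dense-subset argument in $C_b(\mathcal{C}_T)$ would give approximation on compact sets but could blow up off the compacts. This is why the cutoff functions $\chi_m$ and the specific order of limits (interpolate first, then approximate the finite-dimensional function, then truncate) matter — one must verify that at each stage the supremum is controlled by a fixed multiple of $\|F\|_\infty$ independently of all indices, and that the cutoffs disappear in the pointwise limit because any fixed path $\eta$ and its interpolants lie in a fixed ball. A secondary technical point is checking that $\pi_N(\eta) \to \eta$ uniformly on $[0,T]$ for every $\eta \in C[0,T]$, which follows from uniform continuity of $\eta$ on the compact interval $[0,T]$; this is routine but should be stated.
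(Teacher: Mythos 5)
Your proposal is correct and follows essentially the same route as the paper's proof: both reduce a bounded continuous functional to cylindrical functionals via a finite-dimensional interpolation operator that converges uniformly to the identity on fixed paths (you use piecewise-linear interpolation where the paper uses Bernstein polynomials), then approximate the resulting finite-dimensional bounded continuous function by a countable family via Stone--Weierstrass while controlling the supremum (you use smooth cutoffs $\chi_m$ where the paper clamps coordinates to $[-M,M]$), and finally diagonalize with the cutoff radius growing along the diagonal so that any fixed path eventually lies inside it. These are cosmetic variations on the same argument, and you correctly flag the delicate point — keeping the sup bound uniform while securing pointwise convergence — and handle it the same way the paper does.
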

\begin{proof}
We set $T=1$ without loss of generality. Let $\eta\in C([0,1];\R^d)$.
By \cite[Lemma 5.5]{issoglio_russoPDEa} we know that the function $t\mapsto F(\eta(t))$ can be approximated  by $F_n(\eta(\cdot)):= F(B_n (\eta, \cdot))$,
 where $(B_n)$ are the $\R^d$-valued Bernstein polynomials defined for any function  $\eta\in C([0,1];\R^d)$ by 
\[
B_n(\eta,t) : = \sum_{j=0}^n \eta(\frac jn ) t^j (1-t)^{n-j}{ n \choose j}.
 \]
Notice that the convergence is uniform in $t$. 
Now for fixed $n$ and $y_0,  y_1, \ldots, y_n \in \R^d$ we consider the function $f$ on $\R^{(n+1)d}$ defined by 
\[
f(y_0, y_1, \ldots, y_n):= F\left( \sum_{j=0}^n y_j (\cdot)^j (1-\cdot)^{n-j}{ n \choose j}\right),
\]  
so that $F_n(\eta) = f (\eta(\frac0n), \eta(\frac1n), \ldots, \eta(\frac nn))$. Notice that $\sup_{\eta\in C[0,T]} | F_n(\eta)| \leq \|F\|_\infty$.
We have thus reduced the problem to approximating any continuous bounded function  $f: \R^{(n+1)d} \to \R$. 
We further reduce the problem to continuous functions on $[-M, M]^{(n+1)d}$ by restriction, for some $M>0$. Indeed, a function  $f:[-M, M]^{(n+1)d} \to \R$ can be naturally extended to a bounded continuous function $\hat f$ on $\mathbb R^{(n+1)d}$ by setting for $x\in \R^{(n+1)d}$
\[
\hat f(x) = f(x_1\vee( -M) \wedge M, \ldots, x_{(n+1)d}\vee( -M) \wedge M).
\]
One can see that $ C([-M, M]^{(n+1)d})$ is separable by Stone-Weierstrass theorem. We denote by $D_{n, \text{fin}}$ the dense set in the set of bounded and continuous functions from $\R^{(n+1)d}\to \R$. 

The proof is concluded by setting  $D:= \cup_{n\in \N} D_n$, where 
\[
\begin{aligned}
D_n := \bigg\{ F: C([0,1];\R^d) \to \R:   F(\eta) &= f (\eta(\tfrac0n), \eta(\tfrac1n), \ldots, \eta(\tfrac nn) ), \\
& \eta \in C([0,1];\R^d) \text{ for some } f \in D_{n, \text{fin}} \bigg\},
\end{aligned}
\] 
which is a countable set of bounded functions. Then for any bounded and continuous functional $F :C([0,1];\R^d) \to \R$ we construct the sequence $(F_n)$ that converges to $F$ pointwisely  by choosing the appropriate  element $F_n\in D_n$. Since the convergence in $ C([-M, M]^{(n+1)d})$ is uniform we also have $\sup_n \sup_{\eta\in C([0,1];\R^d)} | F_n(\eta)| < \infty.$
\end{proof}

\begin{remark}
One could also define the domain $\mathcal D_{\mathcal L}$ of the martingale problem as a subset of the smaller space $C_T\mathcal C^{(2-\beta)-}$ instead of the larger space $C_T\mathcal C^{(1+\beta)+}$. On the other hand, one could  enlarge the domain by choosing functions with linear growth, namely in  $ C_TD\mathcal C^{\beta+}$. In both cases the analysis of the resulting MP is similar and  should lead to an equivalent  problem to the one studied in the present paper. We leave these details to the interested reader.
\end{remark}

\section{The solution of the MP as weak Dirichlet process}\label{sc:weakDir}

In this section we focus on the weak Dirichlet decomposition property
of the solution of the MP, which will be useful in Section \ref{sc:generalisedSDE} to characterize it as a solution of a generalized SDE.
We notice that a solution to the martingale problem with distributional drift $b$ is not a semimartingale  in general.
Indeed already in the fully studied case of dimension $d=1$, see  \cite[Corollary 5.11]{frw1},
one sees that the solution is a semimartingale if and only if $b$ is a Radon measure. We  can however discuss and investigate  other properties of this process,   which turns out to be  a weak Dirichlet process, and we identify the martingale component of the weak Dirichlet decomposition.

We start with the definition of weak Dirichlet process, that can be found in \cite{gozzi_russo06}, see also \cite{er, er2}. 
\begin{definition} \label{D51}
Let $X$ be a continuous  stochastic process on some probability space $(\Omega, \mathcal F, \mathbb P)$ and let $\mathcal F^X$ denote its canonical filtration.
\begin{itemize}
\item A process $\mathscr A$ is said to be an {\em $\mathcal F^X$-martingale orthogonal process} if $[N,\mathscr A]=0$ for every $\mathcal F^X$-continuous local martingale $N$. 
\item 
  The process $X$  is said {\em $\mathcal F^X$-weak Dirichlet} if it is the sum of an $\mathcal F^X$-local martingale $M$ and an $\mathcal F^X$-martingale orthogonal process $\mathscr A$.
  
When $\mathscr A_0=0$ a.s., we call $
X = M+ \mathscr A$  the standard decomposition. 
\end{itemize}
\end{definition}

\begin{remark}\ 
\begin{itemize}
\item The two equalities in the statement
  of Definition \ref{D51}, that is $[N,\mathscr A]=0$
 and  $
X = M+ \mathscr A$,  
  are meant up to indistinguishability with respect to~$\mathbb P$. 
\item The standard decomposition of a $\mathcal F^X$-weak Dirichlet process is unique.
\end{itemize}
\end{remark}



In the remainder of the section, we let $(X, \mathbb P)$ be the solution to the martingale problem with distributional drift $b$ and initial condition $\mu$, with $\mathbb P$ being a probability measure on some measurable space $(\Omega, \mathcal F)$ that will be fixed throughout. We  will make use of the space of processes $\mathscr C$, introduced in Section \ref{ssc:prob-notation}.
 Let Assumption \ref{ass:param-b} hold.

\begin{proposition}\label{pr:weakDirichlet}
Let $f\in C^{0,1}([0,T]\times \R^d)$. 
Then $f(t, X_t)$ is an $\mathcal F^X$-weak Dirichlet process. 
In particular $X$ is an $\mathcal F^X$-weak Dirichlet process.  \end{proposition}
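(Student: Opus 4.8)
The plan is to realize $f(t,X_t)$ as the terminal value of a family of smooth approximating functions for which Itô's formula applies, and then to pass to the limit, identifying the martingale part explicitly while showing the remainder is martingale-orthogonal. Concretely, since $\nabla f \in C_T\mathcal C^{0+}$ we have $f \in C^{0,1}$; but more importantly, the strategy is to mollify: set $f_n := f \ast \rho_n$ (convolution in space with a heat-kernel mollifier), so that $f_n \in C^{0,\infty}$ is smooth in $x$, with $f_n \to f$ and $\nabla f_n \to \nabla f$ uniformly on compacts. For each fixed $n$ we would like to write the dynamics of $f_n(t,X_t)$; however $X$ is not a semimartingale, so ordinary Itô's formula is not available directly. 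Instead I would work through the transformed process $Y_t = \phi(t,X_t)$, which \emph{is} a (semimartingale) solution of the SDE \eqref{eq:SDEY} with nice bounded coefficients, and then pull back.

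The key steps, in order, are as follows. First, recall $X_t = \psi(t,Y_t)$ where $Y$ solves \eqref{eq:SDEY} and $\psi \in C^{0,1}$ with $\nabla \psi$ bounded. So $f(t,X_t) = f(t,\psi(t,Y_t)) =: F(t,Y_t)$ where $F := f \circ \psi$ (composition in the space variable, keeping time), and by Lemma \ref{lm:gh}-type arguments $F \in C^{0,1}$ as well since $\nabla F = (\nabla\psi)^\top (\nabla f)\circ\psi$ is a bounded product of a $C_T\mathcal C^{0+}$ function with an $L^\infty$ function — actually I should be slightly careful, since $\nabla f$ is only continuous, not Hölder, the composition $\nabla f \circ \psi$ is merely continuous, so $F \in C^{0,1}$ in the weak sense that $\nabla F \in C_T \mathcal C^{0+}$ (or just $\nabla F$ continuous and bounded). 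Second, invoke the known stability/decomposition theory for weak Dirichlet processes under $C^{0,1}$ transformations of a semimartingale: if $Y$ is a continuous semimartingale $Y = M^Y + V^Y$ (here $M^Y_t = \int_0^t \nabla\phi(s,\psi(s,Y_s))\,\di W_s$ and $V^Y$ the bounded-variation drift) and $F \in C^{0,1}$, then $F(t,Y_t)$ is an $\mathcal F^Y$-weak Dirichlet process with martingale part $\int_0^\cdot \nabla F(s,Y_s)\,\di M^Y_s$. This is precisely the content of results in \cite{gozzi_russo06} (and \cite{er,er2}): the Fukushima–Dirichlet / weak Dirichlet calculus gives that $C^{0,1}$ functions of a weak Dirichlet (a fortiori a semimartingale) process are weak Dirichlet, with the martingale component obtained by the "expected" stochastic-integral formula and everything else absorbed into the martingale-orthogonal part $\mathscr A$. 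Third, transfer the statement from the filtration $\mathcal F^Y$ to $\mathcal F^X$: since $\psi$ and $\phi$ are continuous space-homeomorphisms for each $t$, the canonical filtrations of $X$ and $Y$ coincide, so an $\mathcal F^Y$-weak Dirichlet process is an $\mathcal F^X$-weak Dirichlet process and martingale-orthogonality is preserved. Finally, the special case $f(t,x) = x$ (or $f = \mathrm{id}_i$ componentwise), which is trivially in $C^{0,1}$, gives that $X$ itself is $\mathcal F^X$-weak Dirichlet.

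I expect the main obstacle to be the honest justification of the $C^{0,1}$-transformation result for weak Dirichlet processes — i.e. that $F \in C^{0,1}$ and $Y$ a semimartingale implies $F(\cdot, Y_\cdot)$ is weak Dirichlet with the stated martingale component. This is not elementary Itô calculus: one needs the Russo–Vallois / Gozzi–Russo machinery of covariation and the fact that for a bounded-variation-plus-martingale $Y$ and merely $C^{0,1}$ (not $C^{1,2}$) $F$, the "remainder" $F(t,Y_t) - F(0,Y_0) - \int_0^t \nabla F(s,Y_s)\,\di M^Y_s$ has zero covariation with every continuous local martingale. The cleanest route is to cite the relevant proposition of \cite{gozzi_russo06} directly; the only thing I would need to verify by hand is that $F = f\circ\psi$ genuinely lies in the required function class, which follows from the boundedness of $\nabla\psi$ (Section \ref{ssc:pointwise}) together with $f \in C^{0,1}$, and possibly a density/approximation argument through the mollified $f_n$ if the cited result needs a slightly more regular input and then u.c.p.-closedness of the space of weak Dirichlet processes to pass to the limit. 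A secondary, minor technical point is confirming $\mathcal F^X = \mathcal F^Y$; this is immediate from the bicontinuity of $x \mapsto \phi(t,x)$ and its inverse, uniformly in $t$, but worth a sentence.
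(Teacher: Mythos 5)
Your proposal follows essentially the same route as the paper: write $f(t,X_t) = (f\circ\psi)(t,Y_t)$, observe that $Y = \phi(\cdot,X)$ is an $\mathcal F^X$-semimartingale and $f\circ\psi \in C^{0,1}$, and invoke \cite[Corollary 3.11]{gozzi_russo06}, which says that a $C^{0,1}$ function of a continuous semimartingale is weak Dirichlet. The paper's proof is precisely this three-line argument; your detours through mollification, the explicit identification of the martingale part, and the $\mathcal F^X = \mathcal F^Y$ transfer are not needed at this stage (the paper directly states $Y$ is an $\mathcal F^X$-semimartingale), and the first two reappear only later in the proof of Proposition~\ref{pr:Mf}.
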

\begin{proof}
We recall that by Theorem \ref{thm:mart} $X=\psi(t, Y_t)$ where  $\psi \in C^{0,1}$ and $(Y_t)$ is an $\mathcal F^X$-semimartingale.  Then $f(t, X_t) = (f\circ \psi)(t, Y_t)$ is a $C^{0,1}$ function of a semimartingale, hence it is a weak Dirichlet process by \cite[Corollary 3.11]{gozzi_russo06}.
\end{proof}

From now on we denote by   $f(t, X_t) = M^f + \mathscr A^f_t$ the standard decomposition of the weak Dirichlet process  $f(t, X_t) $ for $f\in C^{0,1}$.

In what follows we compute the covariation process between two martingale parts $M^f$ and $M^h$, for two functions $f,h\in C^{0,1}$. To do so we first need some preparatory lemmata dealing with functions in some subspace of  $\mathcal D_{\mathcal L} $.
We denote by $\mathcal D_{\mathcal L}^s$ the space given by
\begin{equation}\label{eq:DLs}
\mathcal D_{\mathcal L}^s:=\{ f \text{ such that } \exists \tilde f \in C^{1,2}_c \text{ and  } f = \tilde f \circ \phi\},
\end{equation}
which is obviously and algebra. Moreover  it is  a linear  subspace of $\mathcal D_{\mathcal L}$ by Lemma \ref{lm:DL}.

\begin{proposition}\label{pr:algebra}
For $f,h \in \mathcal D_{\mathcal L}^s $ we have 
\begin{equation}\label{eq:algebra}
\mathcal L (fh) = (\mathcal L f)h +( \mathcal L h) f + \nabla f \nabla h . 
\end{equation}
\end{proposition}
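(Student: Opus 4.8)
The plan is to reduce the identity \eqref{eq:algebra} to the already-established relation $(\tilde{\mathcal L}\tilde f)\circ\phi=\mathcal L f$ from Lemma \ref{lm:LL}, exploiting the fact that $\tilde{\mathcal L}$ is a genuine second-order differential operator with smooth-enough coefficients, so that its action on products obeys the classical Leibniz-type carré-du-champ formula. Concretely, write $f=\tilde f\circ\phi$ and $h=\tilde h\circ\phi$ with $\tilde f,\tilde h\in C^{1,2}_c$; then $fh=(\tilde f\tilde h)\circ\phi$ and $\tilde f\tilde h\in C^{1,2}_c$ as well (products of compactly supported $C^{1,2}$ functions). By Lemma \ref{lm:LL} applied to $\tilde f\tilde h$ we have $\mathcal L(fh)=(\tilde{\mathcal L}(\tilde f\tilde h))\circ\phi$, and similarly $\mathcal L f=(\tilde{\mathcal L}\tilde f)\circ\phi$, $\mathcal L h=(\tilde{\mathcal L}\tilde h)\circ\phi$.

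The core computation is then purely at the level of the regular operator $\tilde{\mathcal L}$ acting on smooth functions. Recall from \eqref{eq:Ltilde} that $\tilde{\mathcal L}\tilde f=\partial_t\tilde f+\lambda\nabla\tilde f\,(\mathrm{id}-\psi)+\tfrac12\mathrm{Tr}[(\nabla\phi\circ\psi)^\top\,\mathrm{Hess}\,\tilde f\,(\nabla\phi\circ\psi)]$, i.e. $\tilde{\mathcal L}$ has the form $\partial_t+ \tilde\sigma\nabla+\tfrac12\mathrm{Tr}(\tilde a\,\mathrm{Hess})$ with $\tilde a=(\nabla\phi\circ\psi)(\nabla\phi\circ\psi)^\top$. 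For such an operator the Leibniz rule reads
\begin{equation*}
\tilde{\mathcal L}(\tilde f\tilde h)=(\tilde{\mathcal L}\tilde f)\,\tilde h+(\tilde{\mathcal L}\tilde h)\,\tilde f+(\nabla\tilde f)^\top\tilde a\,\nabla\tilde h,
\end{equation*}
which is a direct consequence of $\partial_t(\tilde f\tilde h)=\partial_t\tilde f\,\tilde h+\tilde f\,\partial_t\tilde h$, $\nabla(\tilde f\tilde h)=\nabla\tilde f\,\tilde h+\tilde f\,\nabla\tilde h$, and $\mathrm{Hess}(\tilde f\tilde h)=\tilde h\,\mathrm{Hess}\,\tilde f+\tilde f\,\mathrm{Hess}\,\tilde h+\nabla\tilde f(\nabla\tilde h)^\top+\nabla\tilde h(\nabla\tilde f)^\top$; the first-order cross terms cancel against half the symmetric rank-one pieces in the Hessian after taking the trace against the symmetric matrix $\tilde a$. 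Composing this identity with $\phi$ and using $\psi\circ\phi=\mathrm{id}$ turns the left side into $\mathcal L(fh)$ and the first two terms on the right into $(\mathcal L f)h+(\mathcal L h)f$.

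It remains to identify the carré-du-champ term $\big((\nabla\tilde f)^\top\tilde a\,\nabla\tilde h\big)\circ\phi$ with $\nabla f\,\nabla h=(\nabla f)^\top\nabla h$. Here one uses the chain rule $\nabla f=(\nabla\phi)^\top(\nabla\tilde f\circ\phi)$ (and likewise for $h$), so that $(\nabla f)^\top\nabla h=(\nabla\tilde f\circ\phi)^\top(\nabla\phi)(\nabla\phi)^\top(\nabla\tilde h\circ\phi)$; on the other hand $\tilde a\circ\phi=(\nabla\phi\circ\psi\circ\phi)(\nabla\phi\circ\psi\circ\phi)^\top=(\nabla\phi)(\nabla\phi)^\top$, giving exactly the same expression. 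The main point to be careful about is regularity: all manipulations are legitimate because $\tilde f,\tilde h$ are $C^{1,2}$ with compact support and $\phi,\psi\in C^{0,1}$ with $\nabla\phi,\nabla\psi$ bounded and Hölder (Section \ref{ssc:pointwise}), so $f,h\in\mathcal D_{\mathcal L}\subset\mathcal D^0_{\mathcal L}$ and the identity \eqref{eq:algebra} holds as an equality in $C_T\mathcal C^{0+}$, exactly the space in which Lemma \ref{lm:LL} places $\mathcal L f$, $\mathcal L h$, $\mathcal L(fh)$; I expect this bookkeeping — rather than any single estimate — to be the only delicate point, and it is already handled by the cited results.
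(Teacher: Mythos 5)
Your proof is correct, but it follows a genuinely different route from the paper's. The paper proves \eqref{eq:algebra} by direct computation in the original coordinates: it expands $\partial_t(fh)$, $\tfrac12\Delta(fh)$ and $b\cdot\nabla(fh)$ separately, checks that each distributional product is well-defined via Bony's estimate \eqref{eq:bony} (the delicate one being $h\,\partial_t f$, where $\partial_t f = \mathcal L f - \tfrac12\Delta f - \nabla f\, b \in C_T\mathcal C^{(\beta-1)+}$ pairs against $h\in C_T\mathcal C^{(1+\beta)+}$ with sum of exponents $2\beta>0$), and then collects terms. You instead transfer the whole computation to the transformed side by invoking Lemma \ref{lm:LL} for $\tilde f$, $\tilde h$ and $\tilde f\tilde h$: since $\mathcal D_{\mathcal L}^s$ is by definition $\{\tilde f\circ\phi : \tilde f\in C^{1,2}_c\}$ and $C^{1,2}_c$ is closed under products, the carré-du-champ identity for $\mathcal L$ reduces to the classical Leibniz rule for the smooth-coefficient second-order operator $\tilde{\mathcal L}$, after which the diffusion-matrix factor $\tilde a\circ\phi = (\nabla\phi\circ\psi\circ\phi)(\nabla\phi\circ\psi\circ\phi)^\top$ is cancelled by the two $\nabla\phi$'s produced by the chain rule for $\nabla f$ and $\nabla h$. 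What the paper's route buys is self-containedness — no reliance on the $\phi/\psi$ machinery beyond what is needed to know $f,h\in\mathcal D^0_{\mathcal L}$. What your route buys is the elimination of all distributional-product bookkeeping: on the $\tilde{\mathcal L}$-side everything is a bona-fide continuous function, and the only nontrivial fact used is $(\tilde{\mathcal L}\tilde f)\circ\phi=\mathcal L f$, which Lemma \ref{lm:LL} already established.

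One phrasing quibble: you say ``the first-order cross terms cancel against half the symmetric rank-one pieces in the Hessian.'' There is no cancellation. The first-order drift part obeys the plain product rule and contributes no cross term; the two rank-one blocks $\nabla\tilde f(\nabla\tilde h)^\top$ and $\nabla\tilde h(\nabla\tilde f)^\top$ in $\mathrm{Hess}(\tilde f\tilde h)$ each give the \emph{same} scalar $\tfrac12(\nabla\tilde f)^\top\tilde a\,\nabla\tilde h$ under trace against the symmetric $\tilde a$, and their \emph{sum} is the carré-du-champ. The formula you write down is nonetheless correct, and so is the rest of the argument.
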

\begin{proof}
 Let $f,h \in \mathcal D_{\mathcal L}^s $ and let us compute the time derivative of the product $fh$. We have
\begin{align}\label{eq:dtfg}
\partial_t(fh) & = h\partial_t f+ f \partial_t h, 
\end{align}
which makes sense as we see below. Indeed, $h\partial_t f $ is well-defined because  $h\in C_T \mathcal C^{(1+\beta)+}$ and $
\partial_t f = \mathcal L f -\frac12 \Delta f  - \nabla f b $
is an element of $C_T\mathcal C^{(\beta-1)+}$. The latter holds  because
$ \mathcal L f \in C_T \mathcal  C^{0+}$, $\frac12 \Delta f \in C_T\mathcal C^{(\beta-1)+}$ and 
 $\nabla f b \in C_T\mathcal C^{-\beta}$, with $(\beta-1)\leq -\beta$. Similarly for  $ f \partial_t h$.

We also calculate the Laplacian of $fh$
  \begin{equation}\label{eq:deltafg}
\tfrac12 \Delta (fh) = \frac12 (h \Delta f + 2 \nabla f \nabla h + f\Delta h), 
 \end{equation}  
 where we recall that $\nabla f \nabla h:= \nabla f \cdot \nabla h$, and we calculate  the transport term
\begin{equation}\label{eq:nablafg}
b \nabla (fh) = b \nabla f \, h + b \nabla h \, f,
\end{equation}
which are well-defined by similar arguments. Collecting \eqref{eq:dtfg}, \eqref{eq:deltafg} and \eqref{eq:nablafg} then equality \eqref{eq:algebra} follows. 
\end{proof}

\begin{lemma}\label{lm:covariationDL}
Let $f,h \in \mathcal D_{\mathcal L}^s$. Then 
\begin{equation}\label{eq:covariationDL}
[M^f, M^h]_t = \int_0^t (\nabla f)(s, X_s)  (\nabla h)(s, X_s) \di s.
\end{equation}
\end{lemma}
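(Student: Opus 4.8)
The plan is to exploit that, although $X$ itself is not a semimartingale, for $f\in\mathcal D_{\mathcal L}^s$ the process $f(t,X_t)$ \emph{is} a genuine continuous semimartingale, so that the classical It\^o product formula combined with Proposition~\ref{pr:algebra} yields the covariation directly.

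First I would fix notation and identify $M^f$. Since $\mathcal D_{\mathcal L}^s\subset\mathcal D_{\mathcal L}$ (see the remark following \eqref{eq:DLs}), for $f\in\mathcal D_{\mathcal L}^s$ there is $g^f\in C_T\mathcal C^{0+}$ with $\mathcal L f=g^f$, and by Definition~\ref{def:MP} the process $\widehat M^f_t:=f(t,X_t)-f(0,X_0)-\int_0^t g^f(s,X_s)\,\di s$ is an $\mathcal F^X$-local martingale. Because $g^f$ is a bounded continuous function on $[0,T]\times\R^d$ (the $\mathcal C^{0+}$-norm dominates the sup-norm), the process $A^f_t:=\int_0^t g^f(s,X_s)\,\di s$ is continuous and of bounded variation; hence $f(t,X_t)=f(0,X_0)+\widehat M^f_t+A^f_t$ is a continuous semimartingale. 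By uniqueness of the standard weak Dirichlet decomposition this gives $M^f=f(0,X_0)+\widehat M^f$ and $\mathscr A^f=A^f$, and in particular $[M^f,M^h]=[\widehat M^f,\widehat M^h]$.

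Next, recall that $\mathcal D_{\mathcal L}^s$ is an algebra: if $f=\tilde f\circ\phi$ and $h=\tilde h\circ\phi$ with $\tilde f,\tilde h\in C^{1,2}_c$, then $fh=(\tilde f\tilde h)\circ\phi$ with $\tilde f\tilde h\in C^{1,2}_c$, so $fh\in\mathcal D_{\mathcal L}^s$. Applying the previous paragraph to $fh$ and using Proposition~\ref{pr:algebra} (so that $\mathcal L(fh)=(\mathcal Lf)h+(\mathcal Lh)f+\nabla f\,\nabla h$), the bounded-variation part of the semimartingale $f(t,X_t)h(t,X_t)=(fh)(t,X_t)$ is
\[
\int_0^t\big[(\mathcal Lf)h+(\mathcal Lh)f+\nabla f\,\nabla h\big](s,X_s)\,\di s .
\]
On the other hand, It\^o's product formula for the continuous semimartingales $U_t:=f(t,X_t)$ and $V_t:=h(t,X_t)$ gives $U_tV_t=U_0V_0+\int_0^t U_s\,\di V_s+\int_0^t V_s\,\di U_s+[U,V]_t$; here $\int_0^t U_s\,\di V_s=\int_0^t U_s\,\di\widehat M^h_s+\int_0^t f(s,X_s)(\mathcal Lh)(s,X_s)\,\di s$ with the first term a local martingale, symmetrically for $\int_0^t V_s\,\di U_s$, and $[U,V]=[\widehat M^f,\widehat M^h]=[M^f,M^h]$ since $A^f,A^h$ are continuous of bounded variation. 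Hence the bounded-variation part of $U_tV_t$ also equals
\[
\int_0^t\big[f(\mathcal Lh)+h(\mathcal Lf)\big](s,X_s)\,\di s+[M^f,M^h]_t .
\]
Equating the two expressions for the (unique) bounded-variation component of the continuous semimartingale $f(t,X_t)h(t,X_t)$ — legitimate because $[M^f,M^h]$ is itself of bounded variation — and cancelling the common term, one obtains \eqref{eq:covariationDL}.

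I do not anticipate a substantive obstacle. The only points needing care are (i) the assertion that $f(t,X_t)$ is an honest semimartingale, which rests solely on $\mathcal Lf$ being a bounded continuous function (true since $\mathcal Lf\in C_T\mathcal C^{0+}$), and (ii) matching $M^f$ with the martingale part of the weak Dirichlet decomposition, which is just uniqueness of that decomposition. An alternative proof not using Proposition~\ref{pr:algebra} would pass through $X_t=\psi(t,Y_t)$ and the covariation chain rule for $C^{0,1}$ functions of the semimartingale $Y$, the matrices $\nabla\phi$ cancelling against $\nabla\psi=(\nabla\phi\circ\psi)^{-1}$; but the route above is shorter given Proposition~\ref{pr:algebra}.
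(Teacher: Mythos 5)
Your argument is correct and follows the same route as the paper's proof: write the bounded-variation part of the continuous semimartingale $(fh)(\cdot,X)$ in two ways — once from the martingale problem applied to $fh\in\mathcal D_{\mathcal L}^s$ together with Proposition~\ref{pr:algebra}, once from It\^o integration by parts applied to $f(\cdot,X)$ and $h(\cdot,X)$ — and match them by uniqueness of the decomposition. The extra care you take (that $\mathcal L f\in C_T\mathcal C^{0+}$ makes $f(\cdot,X)$ a genuine semimartingale, and that its weak Dirichlet decomposition therefore coincides with its semimartingale decomposition) is precisely what the paper invokes more tersely as ``uniqueness of the standard weak Dirichlet decomposition.''
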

\begin{proof}
By Proposition \ref{pr:algebra}, $fh\in\mathcal D_{\mathcal L}^s \subset \mathcal D_{\mathcal L},$ so  using the martingale problem, Proposition \ref{pr:weakDirichlet} (and considerations below) together with the uniqueness of the standard weak Dirichlet decomposition
 we have  
\begin{equation}\label{eq:fg1}
(fh)(t,X_t) = M^{fh}+ \int_0^t \mathcal L (fh) (s, X_s) \di s,
\end{equation}
{having incorporated the initial condition $(fh)(0,X_0)$ in the martingale part $M^{fh}$ so that $\mathscr A^{fh}_t = \int_0^t \mathcal L (fh) (s, X_s) \di s$ hence  $\mathscr A^{fh}_0=0$ as required.} It holds also
\begin{align}
\label{eq:semimartf}&f(t,X_t) = M_t^{f}+ \int_0^t \mathcal L f (s, X_s) \di s\\
\label{eq:semimartg}&h(t,X_t) = M_t^{h}+ \int_0^t \mathcal L h (s, X_s) \di s.
\end{align}
Integrating by parts 
$(fh)(t,X_t)$ and using \eqref{eq:semimartf} and \eqref{eq:semimartg}
 we have
\begin{align} \label{eq:fg2}
\notag
(fh)(t,X_t) =&  \int_0^t f(s, X_s) \di h(s, X_s) 
+ \int_0^t h(s, X_s) \di f(s, X_s) 
+ [f(\cdot, X), h(\cdot, X)]_t\\ 
=& \mathscr M_t +\int_0^t f(s, X_s) (\mathcal Lh)(s, X_s) \di s 
+ \int_0^t h(s, X_s) (\mathcal Lf)(s, X_s) \di s
+ [M^f, M^h]_t, 
\end{align}
 where $(\mathscr M_t)$ is some local martingale.  Equations \eqref{eq:fg1} and \eqref{eq:fg2} give two decompositions of the semimartingale $(fh)(t, X_t)$. By uniqueness of the decomposition and taking into account Proposition \ref{pr:algebra}, the conclusion \eqref{eq:covariationDL} follows. 
\end{proof}

\begin{remark}\label{rm:covariation}
We notice that both sides of \eqref{eq:covariationDL} are well-defined also for $f,h\in C^{0,1}$.
\end{remark}

\begin{lemma}\label{lm:density}
$\mathcal D_{\mathcal L}^s$ is dense in $C^{0,1}([0,T] \times \mathbb R^d)$.
\end{lemma}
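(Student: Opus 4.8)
## Proof proposal for Lemma \ref{lm:density}

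The plan is to prove that $\mathcal{D}_{\mathcal{L}}^s$ is dense in $C^{0,1}([0,T]\times\mathbb{R}^d)$ by transporting the problem through the invertible map $\phi$. Recall that $\mathcal{D}_{\mathcal{L}}^s = \{ \tilde f \circ \phi : \tilde f \in C^{1,2}_c \}$, and that $\phi, \psi \in C^{0,1}$ with $\nabla\phi \in C_T\mathcal{C}^{(1-\beta)-}$ and $\nabla\psi$ uniformly bounded (Section \ref{ssc:pointwise}). So given a target $F \in C^{0,1}$, I would set $\tilde F := F \circ \psi$ and first check that $\tilde F \in C^{0,1}$: this follows from the chain rule $\nabla \tilde F = (\nabla\psi)^\top (\nabla F \circ \psi)$ together with the fact that $\nabla F$ is uniformly continuous, $\psi$ is Lipschitz in space (uniformly bounded gradient), $\nabla\psi$ is itself continuous, and the composition and product of bounded uniformly continuous functions is bounded uniformly continuous — the same type of estimate as in Remark \ref{rm:tildeL} and Lemma \ref{lm:gh}, but at the $C^{0,1}$ (not Hölder) level.

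Next I would approximate $\tilde F \in C^{0,1}$ by functions $\tilde f_n \in C^{1,2}_c$ in the $C^{0,1}$-topology, i.e. so that $\tilde f_n \to \tilde F$ and $\nabla \tilde f_n \to \nabla \tilde F$ uniformly on $[0,T]\times\mathbb{R}^d$. This is a standard two-step mollify-and-truncate argument: first convolve $\tilde F$ in the space variable with a smooth mollifier $\rho_\epsilon$ (and, if needed, regularize in time as well, e.g. by convolving the time variable with a mollifier after extending $\tilde F$ continuously past the endpoints) to get a $C^{1,2}$ function whose value and spatial gradient converge uniformly to those of $\tilde F$; then multiply by a smooth cutoff $\chi_R$ equal to $1$ on $[0,T]\times B_R$ and supported in $[0,T]\times B_{2R}$, and note that $\nabla(\chi_R g) = \chi_R \nabla g + g \nabla \chi_R$, where $\|g\nabla\chi_R\|_\infty \to 0$ because $\|\nabla\chi_R\|_\infty = O(1/R)$ and $g$ is bounded. (Boundedness of $\tilde F$ and $\nabla \tilde F$ is what makes the cutoff harmless; this is exactly why one works with $C^{0,1}$ as defined, with bounded gradient.) A diagonal choice $\tilde f_n := \chi_{R_n}(\tilde F * \rho_{\epsilon_n})$ then gives $\tilde f_n \to \tilde F$ in $C^{0,1}$.

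Finally I would set $f_n := \tilde f_n \circ \phi \in \mathcal{D}_{\mathcal{L}}^s$ and show $f_n \to F$ in $C^{0,1}$. Since $F = \tilde F \circ \phi$ (because $\psi = \phi^{-1}$), we have $f_n - F = (\tilde f_n - \tilde F)\circ\phi$, whose sup-norm is bounded by $\|\tilde f_n - \tilde F\|_\infty \to 0$. For the gradients, $\nabla f_n - \nabla F = (\nabla\phi)^\top\big[(\nabla\tilde f_n - \nabla\tilde F)\circ\phi\big]$, so
\[
\|\nabla f_n - \nabla F\|_\infty \le \|\nabla\phi\|_\infty \, \|\nabla\tilde f_n - \nabla\tilde F\|_\infty \to 0,
\]
using that $\nabla\phi$ is uniformly bounded. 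Uniform continuity of $\nabla f_n$ (needed so $f_n \in C^{0,1}$) holds since $\nabla\phi$ and $\nabla\tilde f_n \circ \phi$ are both bounded and uniformly continuous; this is also covered by Lemma \ref{lm:tildef}, which already gives $f_n \in C_T\mathcal{C}^{(1+\beta)+} \subset C^{0,1}$.

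I expect the only genuinely delicate point to be the joint space-time $C^{1,2}_c$ approximation of $\tilde F \in C^{0,1}$: one must produce compactly supported $C^{1,2}$ functions, so the function must be regularized in time up to $C^1$ and in space up to $C^2$ while keeping only first-order spatial convergence, and the time-mollification near $t=0$ and $t=T$ requires a continuous extension of $\tilde F$ to a neighborhood of $[0,T]$. None of this is conceptually hard — it is the classical fact that $C^{1,2}_c$ is dense in $C^{0,1}$ for the norm $\|u\|_\infty + \|\nabla u\|_\infty$ — but it is the step where the bookkeeping lives, and it is the reason the statement is phrased with the bounded-gradient class $C^{0,1}$ rather than a local one.
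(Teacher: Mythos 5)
Your proposal is correct and follows essentially the same route as the paper: transport $f$ to $\tilde f = f\circ\psi$, check $\tilde f\in C^{0,1}$, approximate $\tilde f$ by compactly supported smooth functions via cutoff and mollification (space and time), and pull back through $\phi$. The only cosmetic difference is the order of the two steps — the paper truncates first (with a fixed-width annular cutoff) and then applies a Steklov time-average of a spatial mollification, while you mollify first and truncate second — and both versions work in the topology of local uniform convergence of the function and its gradient, which is the one the paper implicitly uses in Corollary \ref{cor:Acont}.
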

\begin{proof}
 Let $\chi: \mathbb R \to \R_+$ be a smooth function such that 
\[
\chi (x) =\left\{
\begin{array}{ll}
0 \quad &x\geq0\\
1 \quad &x\leq-1\\
\in(0,1) \quad  & x\in(-1,0).
\end{array}
\right.
\]
We set  $\chi_n: \R^d \to \R$ as
$
\chi_n(x) : = \chi (|x| - (n+1)).
$
In particular 
\[
\chi_n (x) =\left\{
\begin{array}{ll}
0 \quad &|x|\geq n+1\\
1 \quad &|x|\leq n\\
\in(0,1) \quad  & \text{otherwise}.
\end{array}
\right.
\]
Let $f\in C^{0,1}$. Let us define $\tilde f:= f\circ \psi \in C^{0,1}$ and $\tilde f_n := \tilde f \chi_n$.
 Since $ \tilde f_n \to \tilde f$ in $C^{0,1}$ also $  f_n:= \tilde f_n \circ \phi \to  f$ in $C^{0,1}$, hence we reduce to the case where  $\tilde f =f\circ \psi$ has compact support. 

 We set  
\[
\tilde  f_m(t,x):= m \int_t^{t+\tfrac1m} (f\star \rho_m)(s,x) \di s,
\]
where $\rho_m$ is a sequence of mollifiers with compact support and $\star$ denotes the {space-}convolution. Then $\tilde  f_m \in C_c^{1, \infty } ([0,T] \times \R^d)$  and $\tilde  f_m \to \tilde f$ in $C^{0,1}$ hence $f_m:= \tilde f_m \circ \phi \to f$ in $C^{0,1}$.
\end{proof}

\begin{theorem}\label{thm:covariation}
Let $f,h \in C^{0,1}$. Then 
\begin{equation}\label{eq:covariation}
[M^f, M^h]_t = \int_0^t (\nabla f)(s, X_s)  (\nabla h)(s, X_s) \di s.
\end{equation}
\end{theorem}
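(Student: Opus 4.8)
The plan is to extend the covariation formula from $\mathcal D_{\mathcal L}^s$ to the full class $C^{0,1}$ by a density and continuity argument. By Lemma \ref{lm:density} we know that $\mathcal D_{\mathcal L}^s$ is dense in $C^{0,1}([0,T]\times\R^d)$, so given $f,h\in C^{0,1}$ we may pick sequences $(f_n)_n,(h_n)_n\subset\mathcal D_{\mathcal L}^s$ with $f_n\to f$ and $h_n\to h$ in $C^{0,1}$. For each $n$, Lemma \ref{lm:covariationDL} gives
\[
[M^{f_n},M^{h_n}]_t=\int_0^t(\nabla f_n)(s,X_s)(\nabla h_n)(s,X_s)\,\di s.
\]
The strategy is then to pass to the limit on both sides: the right-hand side converges because $\nabla f_n\to\nabla f$ and $\nabla h_n\to\nabla h$ uniformly (this is exactly what convergence in $C^{0,1}$ gives), so $(\nabla f_n)(s,X_s)(\nabla h_n)(s,X_s)\to(\nabla f)(s,X_s)(\nabla h)(s,X_s)$ uniformly in $s$, whence the time integrals converge a.s.; the left-hand side requires controlling the covariations of the martingale parts.

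For the left-hand side, the key is to show that $M^{f_n}\to M^f$ in a topology strong enough to pass to the limit in the covariation, e.g.\ that $M^{f_n}-M^f$ has quadratic variation tending to zero in probability (uniformly on $[0,T]$), so that by the bilinearity of the covariation and the Kunita--Watanabe inequality $[M^{f_n},M^{h_n}]\to[M^f,M^h]$ u.c.p. To get this, I would use the weak Dirichlet decomposition $f_n(t,X_t)=M^{f_n}_t+\mathscr A^{f_n}_t$ and $f(t,X_t)=M^f_t+\mathscr A^f_t$, together with the fact that $f_n(t,X_t)-f(t,X_t)=(f_n-f)(t,X_t)$ is itself the $C^{0,1}$-image of the semimartingale $Y$ via $(f_n-f)\circ\psi$, hence a weak Dirichlet process whose martingale part is $M^{f_n}-M^f$. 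Writing $X_t=\psi(t,Y_t)$ with $Y$ a semimartingale and $\psi\in C^{0,1}$, the martingale part of $(g_n)(t,X_t)=(g_n\circ\psi)(t,Y_t)$ (with $g_n:=f_n-f\to0$ in $C^{0,1}$, so $g_n\circ\psi\to0$ in $C^{0,1}$ as well) is, by the results of \cite{gozzi_russo06} on $C^{0,1}$-functions of semimartingales, the It\^o stochastic integral of $\nabla_y(g_n\circ\psi)(s,Y_s)$ against the martingale part of $Y$. Its quadratic variation is $\int_0^t|\,(\nabla_y(g_n\circ\psi))(s,Y_s)\sigma^Y(s)|^2\,\di s$ where $\sigma^Y$ is the (bounded) diffusion coefficient of $Y$ from \eqref{eq:SDEY}; since $\nabla_y(g_n\circ\psi)\to0$ uniformly and $\sigma^Y$ is bounded, this tends to $0$ uniformly in $t$, a.s.

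With both convergences in hand, the conclusion \eqref{eq:covariation} follows: $[M^{f_n},M^{h_n}]_t\to[M^f,M^h]_t$ u.c.p.\ by bilinearity and Kunita--Watanabe (using that $[M^{h_n}]$ stays bounded since $\nabla h_n$ is uniformly bounded and $\sigma^Y$ is bounded), while the explicit integrals converge as noted, and the identity is preserved in the limit. The main obstacle I anticipate is the bookkeeping around the martingale component of a $C^{0,1}$-function of the semimartingale $Y$: one must invoke the representation of that martingale part as a genuine It\^o integral with respect to the Brownian part of $Y$ (which is legitimate here because the diffusion coefficient $\nabla\phi\circ\psi$ of \eqref{eq:SDEY} is a bona fide continuous bounded function, so $Y$ is an honest It\^o process), and then verify that $M^{f_n}$, defined via the weak Dirichlet decomposition of $f_n(t,X_t)$, coincides with the martingale part obtained by pushing through $\psi$ — this is where the uniqueness of the standard decomposition and the identity $f_n(t,X_t)=(f_n\circ\psi)(t,Y_t)$ are used. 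Everything else is routine uniform-convergence estimates.
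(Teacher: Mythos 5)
Your proposal is correct but follows a genuinely different route from the paper. Both proofs start from Lemma \ref{lm:covariationDL} and the density of $\mathcal D_{\mathcal L}^s$ in $C^{0,1}$ (Lemma \ref{lm:density}), but the way the left-hand side is controlled under approximation differs. The paper does \emph{not} try to prove $[M^{f_n}-M^f]\to 0$ directly; instead it invokes the Banach--Steinhaus theorem for F-spaces: the $\varepsilon$-regularized bracket $f\mapsto[M^f,M^h]^\varepsilon$ is a family of continuous linear maps $C^{0,1}\to\mathcal C$ converging pointwise (u.c.p.) to $f\mapsto[M^f,M^h]$, so the limit is automatically continuous, and then one identifies two continuous linear maps that agree on the dense subspace $\mathcal D_{\mathcal L}^s$. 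This is a soft argument that deliberately avoids any explicit formula for $M^f$ --- indeed, in the paper's logical order, the representation $M^f_t=f(0,X_0)+\int_0^t\nabla f\cdot\di M^{\text{id}}$ is established \emph{afterwards} (Proposition \ref{pr:Mf}) and its proof \emph{uses} Theorem \ref{thm:covariation} together with a Kunita--Watanabe decomposition. Your argument instead imports the explicit It\^o-integral form of the martingale component of a $C^{0,1}$-function of a semimartingale straight from Gozzi--Russo, applied to $(g_n\circ\psi)(\cdot,Y)$, to get the key estimate $[M^{f_n}-M^f]_t\le\|\nabla_y(g_n\circ\psi)\|_\infty^2\|\sigma^Y\|_\infty^2\,t\to0$, then finishes by bilinearity and Kunita--Watanabe. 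Each approach buys something: the paper's route needs only the \emph{existence} of the weak Dirichlet decomposition from \cite{gozzi_russo06} and then proves the representation internally; yours is more quantitative and, if the explicit representation really is available in the cited reference, would actually render Lemma \ref{lm:covariationDL} unnecessary --- one could compute $[M^f,M^h]$ for $f,h\in C^{0,1}$ directly from the It\^o integrals without approximating at all. The one thing you should verify concretely is that \cite[Corollary 3.11]{gozzi_russo06} (or some other statement in that reference) really does assert the explicit decomposition, not merely that $u(\cdot,S)$ is weak Dirichlet; the paper cites Corollary 3.11 only for the latter, which is why it reconstructs the representation separately in Proposition \ref{pr:Mf}.
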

\begin{proof}
First we notice that \eqref{eq:covariation} holds for every $f,h\in \mathcal D_{\mathcal L}^s$ by Lemma \ref{lm:covariationDL}. Each side of \eqref{eq:covariation} is well-defined for $f,h \in C^{0,1},$ by Remark \ref{rm:covariation}. Moreover by  Lemma \ref{lm:density} $\mathcal D_{\mathcal L}^s \subset C^{0,1}$ is a dense subspace. 

Next we show that, for fixed $h\in \mathcal D_{\mathcal L}^s$, the map $f\mapsto [M^f, M^h]$ is continuous and linear from $C^{0,1}$ to $\mathcal C$. For this we make use of Banach-Steinhaus theorem for F-spaces, see e.g.\ \cite[Theorem 2.1]{dunford-schwartz}. Indeed, the space $C^{0,1}$ is clearly an F-space, and so is the linear space of continuous processes $\mathscr C$ equipped with the
u.c.p.\ topology. 
Let $[M^f, M^h]^\varepsilon$ denote the $\varepsilon$-regularization of the bracket $[M^f, M^h]$, see
\cite[Definition 4.2]{Russo_Vallois_Book} or \cite[Section 1]{russo_vallois95}
for a precise definition. 
 Let $h\in \mathcal D_{\mathcal L}^s$
be fixed.
 The operator $T^\varepsilon: f \mapsto [M^f, M^h]^\varepsilon$ is linear and continuous from $C^{0,1}$ to $\mathscr C$. Finally $[M^f, M^h]$ is well-defined as a u.c.p.-limit of $[M^f, M^h]^\varepsilon$, see \cite[Proposition 1.1]{russo_vallois95}. Thus by Banach-Steinhaus the map $f\mapsto [M^f, M^h]$ is continuous from $C^{0,1}$. 
  Since both members of \eqref{eq:covariation} are continuous and linear, then \eqref{eq:covariation} extends to all $f\in C^{0,1}$ and $h\in \mathcal D_{\mathcal L}^s$.

 Finally let $f\in C^{0,1}$ be fixed. By the same reasoning as above we extend \eqref{eq:covariation} to $h\in C^{0,1}$.
\end{proof}

\begin{corollary}\label{cor:Acont}
The map $f\mapsto \mathscr A^f$ is continuous (and linear) from $C^{0,1}$ to $\mathscr C$.
\end{corollary}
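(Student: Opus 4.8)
The plan is to derive the continuity and linearity of $f \mapsto \mathscr A^f$ from the analogous properties already established for $f \mapsto M^f$ in Theorem \ref{thm:covariation} (or rather, what its proof yields), together with the defining decomposition $f(\cdot, X_\cdot) = M^f + \mathscr A^f$. First I would observe that linearity of $f \mapsto \mathscr A^f$ is immediate: for $f, g \in C^{0,1}$ and scalars $a, b$, the process $(af + bg)(t, X_t) = a f(t, X_t) + b g(t, X_t)$ is a weak Dirichlet process whose standard decomposition must, by uniqueness of the standard decomposition and linearity of the space of local martingales and of the space of martingale-orthogonal processes, be $a M^f + b M^g$ plus $a \mathscr A^f + b \mathscr A^g$; hence $M^{af+bg} = aM^f + bM^g$ and $\mathscr A^{af+bg} = a \mathscr A^f + b \mathscr A^g$.

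For continuity, the key point is that $\mathscr A^f = f(\cdot, X_\cdot) - M^f$, so it suffices to show that both $f \mapsto f(\cdot, X_\cdot)$ and $f \mapsto M^f$ are continuous from $C^{0,1}$ to $\mathcal C$ (with the ucp topology). The first map is clearly continuous: if $f_n \to f$ in $C^{0,1}$ then in particular $f_n \to f$ uniformly on compact sets, so $f_n(t, X_t) \to f(t, X_t)$ uniformly in $t$ on $[0,T]$, almost surely, which implies ucp convergence. For the second map, I would invoke the same Banach--Steinhaus argument for F-spaces used in the proof of Theorem \ref{thm:covariation}: $C^{0,1}$ is an F-space and $\mathcal C$ with the ucp topology is an F-space, and $f \mapsto M^f$ is linear; one checks continuity of each $M^f$ through an approximation by regularizations as in that proof, or alternatively one notes directly that $f \mapsto M^f = f(\cdot, X_\cdot) - \mathscr A^f$ combined with the covariation identity \eqref{eq:covariation} controls the martingale part. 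A cleaner route: since $[M^{f_n} - M^f, M^{f_n} - M^f]_t = \int_0^t |(\nabla f_n - \nabla f)(s, X_s)|^2 \di s$ by Theorem \ref{thm:covariation} applied to $f_n - f$, and $\nabla f_n \to \nabla f$ uniformly on compacts, the bracket of $M^{f_n} - M^f$ tends to $0$ ucp; together with convergence of the initial values $M^{f_n}_0 \to M^f_0$ (which follows since $M^f_0 = f(0, X_0)$ can be read off, or is incorporated), this gives $M^{f_n} \to M^f$ ucp by the Burkholder--Davis--Gundy inequality or by the basic fact that a sequence of local martingales with vanishing initial values whose brackets go to $0$ ucp converges to $0$ ucp.

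The main obstacle I anticipate is making rigorous the claim that ucp-convergence of the brackets $[M^{f_n}-M^f]$ to zero, plus convergence of initial conditions, forces ucp-convergence of the martingales themselves — this requires a localization argument because the $M^f$ are only local martingales, not true martingales (though on $\mathcal D_{\mathcal L}^s$ they are bounded, on general $C^{0,1}$ one must localize by stopping times $\tau_k = \inf\{t : |X_t| \geq k\}$ and use that on each $[0, \tau_k]$ the relevant quantities are genuinely controlled). I would handle this exactly as in the body of the proof of Theorem \ref{thm:covariation}, reducing to a localized setting where BDG applies, and then passing to the limit; alternatively, one may simply cite that the argument establishing Theorem \ref{thm:covariation} already shows $f \mapsto M^f$ is continuous from $C^{0,1}$ to $\mathcal C$ as an intermediate output, so that $f \mapsto \mathscr A^f = f(\cdot, X_\cdot) - M^f$ is a difference of two continuous linear maps and hence continuous and linear.
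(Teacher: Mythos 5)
Your proposal is correct and follows essentially the same route as the paper: write $\mathscr A^f = f(\cdot, X) - M^f$, observe $f_n(\cdot, X) \to f(\cdot, X)$ ucp from $C^{0,1}$-convergence, and control $M^{f_n} - M^f$ by its bracket via Theorem \ref{thm:covariation}. The ``obstacle'' you anticipate at the end — that vanishing brackets plus vanishing initial values force ucp convergence of the local martingales — is exactly what the paper disposes of in one line by citing \cite[Chapter 1, Problem 5.25]{karatzasShreve}; no separate localization argument is written out.
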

\begin{proof}
Since  $f_n \to 0$ in $C^{0,1}$ then $f_n(\cdot , X)\to 0$ u.c.p. By Theorem \ref{thm:covariation}  $[M^{f_n}]\to 0$, and taking into account \cite[Chapter 1, Problem 5.25]{karatzasShreve} we have that $M^{f_n}\to 0$ u.c.p. Using the decomposition $f_n(\cdot, X) = M^{f_n}+ \mathscr A^{f_n}$ we have $\mathscr A^{f_n} \to 0$ u.c.p. and the proof is concluded. 
\end{proof}

\begin{remark}\label{rm:Mid}
Let $\text{id}_i (x)= x_i$. Then $\text{id}_i \in C^{0,1}$. Setting $M^{\text{id}} = (M^{\text{id}_1}, \ldots, M^{\text{id}_d})^\top$ then by  Theorem \ref{thm:covariation} we have 
 \[
[ M^{\text{id}_i},  M^{\text{id}_j}]_t = \delta_{ij}t. 
\] 
Hence  by L{\'e}vy characterization theorem this implies that $M^{\text{id}}- X_0$ is a standard $d$-dimensional Brownian motion. We denote this Brownian motion by $W^X$.
\end{remark}

\begin{proposition}\label{pr:Mf}
For $f\in C^{0,1}([0,T] \times \R^d)$ we have 
\[
M^f_t = f(0, X_0) + \int_0^t \nabla f(s, X_s) \cdot  \di M^{\text{id}}_s.
\]
\end{proposition}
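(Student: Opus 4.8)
The plan is to identify $M^f$ by showing that the continuous local martingale
\[
Z_t := M^f_t - f(0, X_0) - \int_0^t \nabla f(s, X_s) \cdot \di M^{\text{id}}_s
\]
has identically vanishing quadratic variation and starts at $0$, hence is a.s.\ the zero process.

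First I would set $N_t := \int_0^t \nabla f(s, X_s) \cdot \di M^{\text{id}}_s = \sum_{i=1}^d \int_0^t \partial_i f(s, X_s)\,\di M^{\text{id}_i}_s$ and check it is well defined: since $f \in C^{0,1}$, the integrand $\nabla f(\cdot, X_\cdot)$ is continuous and $\mathcal F^X$-adapted, hence locally bounded, and by Remark~\ref{rm:Mid} we have $M^{\text{id}} = X_0 + W^X$ with $W^X$ a standard Brownian motion, so $N$ is a continuous $\mathcal F^X$-local martingale with $N_0 = 0$. By Proposition~\ref{pr:weakDirichlet} and the normalisation $\mathscr A^f_0 = 0$ of the standard decomposition, $M^f_0 = f(0, X_0)$, so $Z$ is a continuous local martingale with $Z_0 = 0$.

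Next I would compute $[Z]$ by bilinearity of the covariation, noting that the time-independent, $\mathcal F_0$-measurable term $f(0, X_0)$ has zero covariation with everything, so $[Z] = [M^f] - 2[M^f, N] + [N]$. Theorem~\ref{thm:covariation} with $h = f$ gives $[M^f]_t = \int_0^t |\nabla f(s, X_s)|^2\,\di s$. For the cross term, by the associativity of the covariation with the stochastic integral and then Theorem~\ref{thm:covariation} with $h = \text{id}_i$ (so $\nabla \text{id}_i \cdot \nabla f = \partial_i f$),
\[
[M^f, N]_t = \sum_{i=1}^d \int_0^t \partial_i f(s, X_s)\,\di [M^f, M^{\text{id}_i}]_s = \sum_{i=1}^d \int_0^t \partial_i f(s, X_s)^2\,\di s = \int_0^t |\nabla f(s, X_s)|^2\,\di s .
\]
Finally, using $[M^{\text{id}_i}, M^{\text{id}_j}]_t = \delta_{ij} t$ from Remark~\ref{rm:Mid},
\[
[N]_t = \sum_{i,j=1}^d \int_0^t \partial_i f(s, X_s)\,\partial_j f(s, X_s)\,\di [M^{\text{id}_i}, M^{\text{id}_j}]_s = \int_0^t |\nabla f(s, X_s)|^2\,\di s ,
\]
so $[Z]_t = (1 - 2 + 1)\int_0^t |\nabla f(s, X_s)|^2\,\di s = 0$.

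To conclude, a continuous local martingale with identically zero quadratic variation is a.s.\ constant and equal to its initial value, so $Z \equiv 0$, which is exactly the claimed identity. The argument is essentially a bookkeeping assembly of Theorem~\ref{thm:covariation} and Remark~\ref{rm:Mid}; the only point requiring a little care is the vector-valued stochastic integral and the standard covariation-versus-integral identities, so I do not expect a genuine obstacle. As an alternative one could first prove the formula for $f \in \mathcal D_{\mathcal L}^s$ and then extend to $f \in C^{0,1}$ using Lemma~\ref{lm:density} together with Corollary~\ref{cor:Acont} and the continuity of $f \mapsto \int_0^\cdot \nabla f(s, X_s) \cdot \di M^{\text{id}}_s$, but the quadratic-variation route above is shorter and works directly for all $f \in C^{0,1}$.
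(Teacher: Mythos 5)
Your proof is correct, and it takes a genuinely different route from the paper's. The paper argues via uniqueness of the standard weak Dirichlet decomposition: it sets $\tilde{\mathscr A}^f_t := f(t,X_t) - f(0,X_0) - \int_0^t \nabla f(s,X_s)\cdot\di M^{\text{id}}_s$, shows that $[\tilde{\mathscr A}^f, N]=0$ for \emph{every} continuous local $\mathcal F^X$-martingale $N$ by appealing to the Kunita--Watanabe decomposition $N = N_0 + \int \xi\cdot\di M^{\text{id}} + O$, and then concludes $\tilde{\mathscr A}^f = \mathscr A^f$. You instead observe that the difference $Z_t = M^f_t - f(0,X_0) - \int_0^t \nabla f(s,X_s)\cdot\di M^{\text{id}}_s$ is itself a continuous local martingale with $Z_0=0$, and compute $[Z]$ directly from Theorem~\ref{thm:covariation} applied to the three pairs $(f,f)$, $(f,\text{id}_i)$, $(\text{id}_i,\text{id}_j)$, obtaining $[Z]\equiv 0$ and hence $Z\equiv 0$. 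Both routes ultimately rest on Theorem~\ref{thm:covariation} and Remark~\ref{rm:Mid}, but your argument sidesteps the Kunita--Watanabe step entirely: you never need to handle an arbitrary $N$, only the explicit stochastic integral $N=\int\nabla f\,\di M^{\text{id}}$, for which all the required brackets are computable in closed form. This makes your version more self-contained: the paper's computation $[M^f,N]=[M^f,\int\xi\,\di M^{\text{id}}]$ tacitly drops the contribution $[M^f,O]$ of the orthogonal residual $O$, which is orthogonal to $M^{\text{id}}$ by construction but not obviously orthogonal to $M^f$ a priori; your quadratic-variation argument never runs into that subtlety.
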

\begin{proof}
Recall that  we write
\begin{equation}\label{eq:fMA}
f(t, X_t) = M^f_t + \mathscr A^f_t,  
\end{equation}
where the right-hand side is the standard (unique) decomposition
of the left-hand side,
as an $\mathcal F^X$-weak Dirichlet process. In particular $ \mathscr A^f$ is an $\mathcal F^X$-orthogonal process with $  \mathscr A^f_0=0$ and $M^f$ is the martingale component. We define $\tilde {\mathscr A}^f$ so that 
\[
f(t, X_t) =   f(0, X_0) + \int_0^t \nabla f(s, X_s) \cdot  \di M^{\text{id}}+ \tilde {\mathscr A}^f_t.
\]
We will prove later that 
\begin{equation}\label{eq:N}
[\tilde {\mathscr A}^f, N] =0 \text{ for all continuous local $\mathcal F^X$-martingales }N.
\end{equation}
From \eqref{eq:N} we have that $\tilde {\mathscr A}^f$ is an $\mathcal F^X$-martingale orthogonal process with  
$\tilde {\mathscr A}^f_0 =f(0, X_0)-f(0, X_0) =0$, 
thus by uniqueness of the decomposition of weak Dirichlet processes it must be $\tilde {\mathscr A}^f ={\mathscr A}^f$ and therefore  
\[
M^f_t = f(0, X_0) + \int_0^t \nabla f(s, X_s) \cdot \di M_s^{\text{id}},
\]
as wanted. It remains to prove \eqref{eq:N}. By definition of $\tilde {\mathscr A}^f$ and  \eqref{eq:fMA} we have 
\begin{align} \label{eq:NN} 
\nonumber
[\tilde {\mathscr A}^f, N]_t &= [f(\cdot, X_\cdot) , N]_t - [\int_0^\cdot \nabla f(s, X_s)  \cdot \di M^{\text{id}}, N]_s\\
& = [M^f, N]_t-  \int_0^t  \nabla f(s, X_s) \cdot \di[ M^{\text{id}}, N]_s,
\end{align} 
having used the weak Dirichlet decomposition $f(\cdot, X)= M^f +{\mathscr A}^f $, where ${\mathscr A}^f$ is an $\mathcal F^X$-martingale orthogonal process. Regarding  $N$, now we observe that by Kunita-Watanabe decomposition  there is an $\mathcal F^X $-progressively  measurable process $\xi$ and an orthogonal local martingale $O$ such that
\[
N_t = N_0 + \int_0^t \xi_s \cdot dM^{\text{id}}_s +O_t.
\]
Thus the covariation with $M^{\text{id}}$ gives
\begin{align*}
[ M^{\text{id}}, N]_t &= [ M^{\text{id}},  \int_0^\cdot \xi_s \cdot \di  M^{\text{id}}_s ]_t =\int_0^t \xi_s \di s  ,
\end{align*}
since $[ M^{\text{id}_i},  M^{\text{id}_j}]_t = \delta_{i,j} t $ by Remark \ref{rm:Mid}. 
We  calculate $[  M^f , N]_t$ using Theorem  \ref{thm:covariation} to get
\[
[  M^f , N]_t =  [ M^{f},  \int_0^\cdot \xi_s  \cdot \di  M^{\text{id}}_s ]_t =\int_0^t \xi_s \cdot  \di   [ M^{f}, M^{\text{id}}]_s   = \int_0^t \xi_s \cdot  \nabla f (s, X_s)  \di  s  .
\]
Plugging these two covariations into  \eqref{eq:NN} we get 
\[
[\tilde {\mathscr A}^f, N]_t = \int_0^t \xi_s \cdot \nabla f (s, X_s)  \di  s  - \int_0^t  \nabla f(s, X_s) \cdot   \xi_s \di s =0 ,
\]
 which is \eqref{eq:N} as wanted. 
\end{proof}

We conclude this section with some final remarks.
\begin{remark}\label{rm:A}
 \begin{itemize}
\item[(i)]  We recall that $ \mathcal D_{\mathcal L}^s  \subset \mathcal D_{\mathcal L} \subset C^{0,1}$. Thus  for $f\in \mathcal D_{\mathcal L}^s$  by uniqueness of the weak Dirichlet decomposition and by the martingale problem we have $\mathscr A^f_t = \int_0^t (\mathcal L  f)(s, X_s) \di s$. Therefore we have that $f\mapsto \mathscr A^f$ is the continuous linear extension of $f\mapsto \int_0^t (\mathcal L  f)(s, X_s) \di s$ taking values in
  $\mathscr C$.

\item[(ii)]  We recall that the function $\text{id}_i$ solves PDE \eqref{eq:PDE}  so we have  $\mathcal L \text{id}_i = b^i$, see Section \ref{ssc:pointwise}. Hence  taking $f=\text{id}_i$ for some $i\in\{1, \ldots, d \}$ one gets $X = M^{\text{id}_i} + \mathscr A^{\text{id}_i} $, where  formally
\[
\mathscr A^{\text{id}_i}  = ``\int_0^\cdot b^i(s, X_s) \di s",
\]
by the first point in this Remark. Putting all components together one would get indeed 
\[
\mathscr A^{\text{id}} :=(\mathscr A^{\text{id}_i} )_i = 
 ``\int_0^\cdot b(s, X_s) \di s".
\] 
Plugging  this  into the  decomposition $\text{id}(X_t) = M_t^{\text{id}} + \mathscr A_t^{\text{id}} $ and using  Remark \ref{rm:Mid} gives the (formal) writing 
\[ X_t  =X_0 + W^X_t+ ``\int_0^t b(s, X_s) \di s"\]
as expected. 
 Notice however  that in general $\text{id}_i \notin \mathcal D_{\mathcal L}$ since  $b\in C_T \mathcal C^{-\beta}$ so in general $ b \notin C_T \bar{\mathcal C}_c^{0+}$. This is why the writing above is only formal. We will introduce an extended domain in the next section to make this argument rigorous.  
\end{itemize} 
\end{remark}

 \section{Generalised SDEs and their relationship with MP} \label{sc:generalisedSDE}

 In this final section we investigate the dynamics of the process $X$ which formally solves the SDE $\di X_t = b(t, X_t) \di t + \di W_t$ and compare it to the solution to the martingale problem. First we define a notion of solution for the formal SDE, a definition that amongst other things involves weak Dirichlet processes. We show that any solution to the MP is also a solution of the formal SDE and a chain rule holds (Theorem \ref{thm:cr}). Finally we {\em close the circle} by showing that, under the stronger assumption for $X$ to be
a Dirichlet process, $X$ being a solution
to the formal SDE is equivalent to being a solution to the MP (Corollary \ref{cor:Xsoliff}). We recall that $X$ is an ${\mathcal F}^X$-Dirichlet process
  if it is the sum of an ${\mathcal F}^X$-local martingale
    plus an adapted zero quadratic variation process. In this section there is always  an underlying measurable space  $(\Omega, \mathcal F)$. 
    
We make a further technical assumption on the support of the singular drift $b$. This Assumption is a standing assumption until the end of the paper.
\begin{assumption}\label{ass:param-bc}
 Let $b \in  C_T \bar{\mathcal C}_c^{(-\beta)+} $.
\end{assumption}

As mentioned above, 
the idea of the current section is inspired by Remark \ref{rm:A} and consists in  further investigating to which extent our solution to the martingale problem is the solution of an SDE of the form
\[
 X_t  = X_0 + W^X_t+ ``\int_0^t b(s, X_s) \di s",
\]
where $X_0 \sim \mu$.
We note that if $b=l$ were a function, the interpretation of $``\int_0^t l(s, X_s) \di s"$ would indeed be the integral $\int_0^t l(s, X_s) \di s$.  In particular, $\int_0^t l(s, X_s) \di s$ is well-defined for any $l \in  C_T \bar{ \mathcal C}_c^{0+}$.
 We will study various properties of $l \mapsto \int_0^t l(s, X_s) \di s$ for a reasonable class of distributions $l$ (which includes for example $b\in  C_T \bar{ \mathcal C}_c^{(-\beta)+}$ from Assumption \ref{ass:param-bc}), proceeding similarly to \cite{russo_trutnau07}.

\begin{definition}
Let $\mathbb P$ be a probability measure on $(\Omega, \mathcal F)$.
We say that a process $X$ fulfills  the {\em local time property} with respect to a topological vector space $B \supset  C_T \bar{ \mathcal C}_c^{0+}$ if  $  C_T \bar{ \mathcal C}_c^{0+}$ is dense  in $B $ and the map from $  C_T \bar{ \mathcal C}_c^{0+}$ with values in $ \mathscr C$ defined by 
\[
 l \mapsto \int_0^t l(s, X_s) \di s
\]
admits a continuous extension to $B$ (or equivalently it is continuous with respect to {the topology of} $B$) which we denote by $A^{X, B}$.
\end{definition}

Notice that this notion has been first defined in a different context in \cite[Definition 6.1]{russo_trutnau07}, see also \cite[Remark 6.2]{russo_trutnau07} for the links to local time.
Using the  local time property we now introduce a notion of solution to SDE  which is different from the martingale problem.  We will then study its properties and links to the solution to the martingale problem. 

\begin{definition}\label{def:Bsol}
Let $\mathbb P$ be a probability measure on $(\Omega, \mathcal F)$.
Given $b\in B\subset \mathcal S'(\R^d)$ we say that $X$ is a {\em $B$-solution} to 
\[
 X_t  =X_0 + W_t+ \int_0^t b(t, X_t) \di s,
\]
if there exists a  Brownian motion  $W= W^X$  and 
\begin{itemize}
\item[(a)] $X$ fulfills the  local time property with respect to $B$; 
\item[(b)] $b\in B$;
\item[(c)] $ X_t  = X_0 + W^X_t+ A_t^{ X,B}(b)$;
\item[(d)] $X$ is an $\mathcal F^X$-weak Dirichlet process.
\end{itemize}
\end{definition}


\begin{remark}
Some examples of $B$  are $B=  C_T \bar{ \mathcal C}_c^{0+}$ and  $B= C_T \bar{ \mathcal C}_c^{(-\beta)+}$. Indeed,  $  \bar{ \mathcal C}_c^{0+}$ is dense in  $\bar{ \mathcal C}_c^{(-\beta)+}$ since by \cite[Lemma 5.4 (i)]{issoglio_russoPDEa} $\mathcal S \subset\bar{ \mathcal C}_c^{0+} $ and $\mathcal S$ is dense in $\bar{ \mathcal C}_c^{(-\beta)+}$. Finally by \cite[Remark B.1]{issoglio_russoMK} we conclude that 
$ C_T \bar{ \mathcal C}_c^{0+}$ is dense in  $C_T \bar{ \mathcal C}_c^{(-\beta)+}$.
 \end{remark}

 Below we will investigate $B$-solutions for $B=  C_T \bar{\mathcal C}^{(-\beta)+}_c$. We denote by 
\begin{equation}\label{eq:DLB}
\mathcal D_{\mathcal L}^B:= \left \{f \in \mathcal D^0_{\mathcal L} \text{ such that } g:= \mathcal L f \in B 
\right \}.
\end{equation}

\begin{remark}\label{rm:b}
Let  $B =  C_T \bar{ \mathcal C}_c^{(-\beta)+}$.
Notice that  $f=\text{id} \in \mathcal D_{\mathcal L}^B$ and $\mathcal L \, id=b$, in the sense that $\mathcal L \, \text{id}_i=b^i$ for all $i=1, \ldots, d$ as recalled in Remark \ref{rm:A} item (ii). 
\end{remark}

\begin{theorem}\label{thm:cr}
Let $B =  C_T \bar{ \mathcal C}_c^{(-\beta)+}$.
Let $(X, \mathbb P) $ be the solution to the martingale problem with distributional drift $b$ and i.c.\ $\mu$. Then there exists a Brownian motion $W^X$ with respect to $ \mathbb P $ such that  $X$ is a $B$-solution of 
\[
 X_t  = X_0 + W^X_t+ \int_0^t b(s, X_s) \di s,
\]
where $X_0 \sim \mu$.
Moreover, for every $f\in \mathcal D_{\mathcal L}^B $ we have  {the chain rule}
\begin{equation}\label{eq:cr}
f(t, X_t) = f(0,X_0) + \int_0^t (\nabla f ) (s, X_s) \cdot \di W^X_s + A^{X,B}_t (\mathcal L f),
\end{equation}
and the equality 
\begin{equation}\label{eq:AA}
A^{X,B}_t (\mathcal L f) = \mathscr A^f_t.
\end{equation} 
\end{theorem}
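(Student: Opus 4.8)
The plan is to build everything on the Zvonkin transformation and the weak Dirichlet machinery already developed. First I would establish the local time property, i.e.\ part (a) of Definition \ref{def:Bsol} for $B = C_T \bar{\mathcal C}_c^{(-\beta)+}$. Given $l \in C_T \bar{\mathcal C}_c^{0+}$, solve the PDE $\mathcal L f_l = l$ with terminal condition $f_l(T)=0$; by Section \ref{ssc:pointwise} this has a unique solution $f_l \in C_T\mathcal C^{(1+\beta)+}\subset C^{0,1}$, and in fact $f_l\in\mathcal D_{\mathcal L}$. By the martingale problem and the (now established) weak Dirichlet decomposition, $\mathscr A^{f_l}_t = \int_0^t l(s,X_s)\di s$, so that $\int_0^t l(s,X_s)\di s = f_l(t,X_t) - f_l(0,X_0) - M^{f_l}_t$. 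Using Proposition \ref{pr:Mf}, $M^{f_l}_t - f_l(0,X_0) = \int_0^t \nabla f_l(s,X_s)\cdot \di M^{\text{id}}_s$. Now I would invoke the continuity results for the PDE \eqref{eq:PDE}: the map $l \mapsto f_l$ is continuous from $C_T\mathcal C^{-\beta}$ (hence from $B$) to $C_T\mathcal C^{(1+\beta)+}$, which by Lemma \ref{lm:gh}-type arguments (or directly the bound on $C^{0,1}$ norms) gives continuity of $l\mapsto (f_l, \nabla f_l)$ into $C^{0,1}$. Combining with Corollary \ref{cor:Acont} (continuity of $f\mapsto \mathscr A^f$ on $C^{0,1}$) and with continuity of the stochastic integral $\nabla f_l \mapsto \int_0^\cdot \nabla f_l(s,X_s)\cdot\di M^{\text{id}}_s$ as a ucp-limit, one concludes that $l\mapsto \int_0^\cdot l(s,X_s)\di s$ extends continuously to $B$. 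This extension is $A^{X,B}$, and it automatically satisfies $A^{X,B}_t(l) = \mathscr A^{f_l}_t$ for $f_l$ the solution of $\mathcal L f_l = l$, $f_l(T)=0$, which is the key identity.

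Next, to obtain that $X$ is a $B$-solution of the SDE, I would apply the above with $l$ replaced by $b$. By Remark \ref{rm:b}, $b\in B$ (part (b)) and $\text{id}_i\in \mathcal D_{\mathcal L}^B$ with $\mathcal L\,\text{id}_i = b^i$. Part (d) is Proposition \ref{pr:weakDirichlet}. For part (c): apply Remark \ref{rm:A}(ii) rigorously now — write the weak Dirichlet decomposition $\text{id}(X_t) = M^{\text{id}}_t + \mathscr A^{\text{id}}_t$; by Remark \ref{rm:Mid}, $M^{\text{id}}_t - X_0 =: W^X_t$ is a standard Brownian motion; and by the identity just established (applied componentwise to $f_{b^i}$ versus $\text{id}_i$, noting that $\text{id}_i$ and $f_{b^i}$ solve $\mathcal L\cdot = b^i$ with the \emph{same} $\mathcal L$, so their difference has a classical — actually deterministic linear — behaviour, or more directly just by definition of $A^{X,B}$ as the continuous extension) we get $\mathscr A^{\text{id}_i}_t = A^{X,B}_t(b^i)$. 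Hence $X_t = X_0 + W^X_t + A^{X,B}_t(b)$, which is (c).

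For the chain rule \eqref{eq:cr} and \eqref{eq:AA}, take $f\in\mathcal D_{\mathcal L}^B$, so $\mathcal L f = g\in B$. Since $f\in\mathcal D^0_{\mathcal L}\subset C^{0,1}$, Proposition \ref{pr:weakDirichlet} gives $f(t,X_t) = M^f_t + \mathscr A^f_t$ and Proposition \ref{pr:Mf} gives $M^f_t = f(0,X_0) + \int_0^t \nabla f(s,X_s)\cdot\di W^X_s$ (using $M^{\text{id}} = X_0 + W^X$). It remains to identify $\mathscr A^f_t$ with $A^{X,B}_t(g)$. For this, approximate $g$ by $g^n\in C_T\bar{\mathcal C}_c^{0+}$ with $g^n\to g$ in $B$ (density holds by the Remark following Definition \ref{def:Bsol}), let $f^n$ solve $\mathcal L f^n = g^n$, $f^n(T)=f(T)$ — wait, more carefully, solve with terminal condition $f(T)$ so that by PDE uniqueness and continuity $f^n\to f$ in $C_T\mathcal C^{(1+\beta)+}$, hence in $C^{0,1}$. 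For each $n$, $f^n\in\mathcal D_{\mathcal L}$, so by the martingale problem $\mathscr A^{f^n}_t = \int_0^t g^n(s,X_s)\di s = A^{X,B}_t(g^n)$. Passing to the limit: the left side converges ucp to $\mathscr A^f_t$ by Corollary \ref{cor:Acont}, and the right side converges ucp to $A^{X,B}_t(g)$ by definition of the continuous extension $A^{X,B}$. This yields $\mathscr A^f_t = A^{X,B}_t(\mathcal L f)$, which is \eqref{eq:AA}, and substituting into the weak Dirichlet decomposition gives \eqref{eq:cr}.

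The main obstacle I anticipate is the careful handling of the terminal conditions in the PDE approximations and making sure every convergence is in a topology strong enough to pass to the limit simultaneously in the (deterministic) function part, its gradient, the stochastic integral, and the $\mathscr A$-part — in particular checking that the continuity of $l\mapsto f_l$ from the negative Besov space $B$ into $C^{0,1}$ (not merely into $C_T\mathcal C^{(1+\beta)+}$, though the latter embeds into the former) is what licenses the use of Corollary \ref{cor:Acont} and the ucp-continuity of the stochastic integral. A secondary subtlety is verifying that the extension $A^{X,B}$ does not depend on the chosen approximating sequence and agrees on the overlap of the two descriptions (via $f_l$ and via the original integral), but this is immediate from uniqueness of continuous extensions once density of $C_T\bar{\mathcal C}_c^{0+}$ in $B$ is in hand.
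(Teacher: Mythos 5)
Your overall plan is the same as the paper's: establish the local time property via the solution operator $T(g)$ (the unique solution of $\mathcal L v = g$, $v(T)=0$), use the identity $\mathscr A^{T(g)} = A^{X,B}(g)$ together with Corollary \ref{cor:Acont} and the PDE continuity results to get the continuous extension of $l\mapsto\int_0^\cdot l(s,X_s)\,\di s$, invoke Proposition \ref{pr:Mf} for the martingale component, and then extend the chain rule by density. Items (b) and (d) are handled identically. Up to and including the local time property, your argument is the same as the paper's.

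The deviation, and the gap, is in the extension step for the chain rule. You approximate $g=\mathcal L f$ by $g^n\in C_T\bar{\mathcal C}_c^{0+}$ and solve $\mathcal L f^n = g^n$ with terminal condition $f^n(T)=f(T)$, then assert $f^n\in\mathcal D_{\mathcal L}$. But $\mathcal D_{\mathcal L}$, as defined in \eqref{eq:D}, requires $f^n(T)\in\bar{\mathcal C}_c^{(1+\beta)+}$, i.e.\ the terminal datum must lie in the \emph{compactly-supported} closure. For a generic $f\in\mathcal D_{\mathcal L}^B$ — and in particular for $f=\mathrm{id}_i$, which is exactly the element you need to derive point (c) — $f(T)$ is not in $\bar{\mathcal C}_c^{(1+\beta)+}$ (indeed $\mathrm{id}_i(T,\cdot)=x_i$ has linear growth), so the assertion $f^n\in\mathcal D_{\mathcal L}$ fails and you cannot invoke the martingale problem for $f^n$. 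This is a concrete missing step. The paper avoids precisely this by recasting the chain rule as a statement about $T(g)$ (equation \eqref{eq:crg}): since $T$ imposes the zero terminal condition, $T(g)\in\mathcal D_{\mathcal L}$ whenever $g\in C_T\bar{\mathcal C}_c^{0+}$, so the density argument goes through without further hypotheses on $f(T)$. The price is that \eqref{eq:crg} is, as written, the chain rule only for the particular representative $T(\mathcal L f)$, and identifying it with an arbitrary $f\in\mathcal D_{\mathcal L}^B$ still requires treating the ``harmonic'' remainder $h:=f-T(\mathcal L f)$ (solving $\mathcal L h = 0$ with $h(T)=f(T)$ of linear growth); the paper treats this identification briefly. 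To close the gap in your version you should either switch to the paper's form \eqref{eq:crg}, or explicitly decompose $f = T(\mathcal L f) + h$ and argue separately that $h(t,X_t)$ is a local martingale (so that $\mathscr A^h=0$ and $A^{X,B}(\mathcal L h)=A^{X,B}(0)=0$), which needs an additional approximation or truncation argument since $h\notin\mathcal D_{\mathcal L}$.
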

\begin{remark}\label{rm:cr}
Notice that point (c) in Definition  \ref{def:Bsol} provides the standard decomposition of the weak Dirichlet process $X$, where the local martingale component is given by $X_0+W^X$ and the martingale orthogonal process is given by  $A^{X,B}_t (b) = \mathscr A^{\text{id}}_t$ in view of \eqref{eq:AA} and Remark \ref{rm:b}.
\end{remark}

\begin{proof}[Proof of Theorem \ref{thm:cr}]
For ease of notation we write $A^X$ in place of $A^{X,B}$.

Let $(X, \mathbb P)$ be the solution  to the martingale problem with distributional drift $b$ and i.c.\ $\mu$. We have to show that the four conditions of Definition \ref{def:Bsol} are satisfied. Clearly $b\in B$ which is point (b) of   Definition \ref{def:Bsol}. 
By Proposition \ref{pr:weakDirichlet},  for every $f\in C^{0,1}$ we have that $f(t, X_t)$ is an $\mathcal F^X$-weak Dirichlet process,  hence  $X$ is also a weak Dirichlet process (point (d) of Definition  \ref{def:Bsol}) 
with decomposition 
\[
f(t, X_t) =  M^f_t + \mathscr A^f_t . 
\]
 Next we check the   local time property, which is point (a) of   Definition \ref{def:Bsol}. We use that $X$ solves the martingale problem for every $f\in \mathcal D_{\mathcal L} \subset C^{0,1}$ (thus $f(t, X_t)- \int_0^t (\mathcal L f) (s, X_s) \di s $ is a local martingale) and uniqueness of the weak Dirichlet decomposition to get
\begin{equation}\label{eq:A}
\mathscr A^f = \int_0^\cdot (\mathcal L f) (s, X_s) \di s =  A^X( \mathcal L f),
\end{equation}
where the second equality holds because  $ \mathcal L f\in C_T \bar{\mathcal C}_c^{0+}$. We want to show that $  A^X$ extends to all $g\in B = C_T \bar{ \mathcal C}_c^{(-\beta)+}$. 
Let us denote by $T$ the  map 
\[
\begin{array}{llll}
T:& C_T \bar{\mathcal C}_c^{(-\beta)+} &\to& C_T D \mathcal C^{\beta+} \\
& g& \mapsto & T(g) := v,
\end{array}
\]
 where $v$ is the unique solution in $C_T \mathcal C^{(1+\beta)+}$  of PDE 
 $$
\left\{
\begin{array}{l}
 \mathcal L v = g\\
 v(T) = 0,
\end{array}
\right.
 $$
which is PDE \eqref{eq:PDE} with $v_T =  0$, see Section \ref{ssc:pointwise}.   It is clear that for $f\in \mathcal D_{\mathcal L}$ and $g= \mathcal L f \in C_T \bar{\mathcal C}_c^{0+}$ 
 we have $T(g)=f$ so that \eqref{eq:AA} writes
\[
\mathscr A^{T(g)} =  A^X(g).
\] 
Now we recall  that  $g\mapsto T(g)\in   C_T  \mathcal C^{(1+\beta)+}  \subset C^{0,1}$ is continuous, see Section \ref{ssc:pointwise}, in particular when $g_n\to g$ in $C_T \bar{\mathcal C}_c^{-\beta} $ then $f_n =T(g_n)\to T(g)= f$ in $C_T \mathcal C^{(1+\beta)+}  \subset C^{0,1} $. Moreover  by Corollary \ref{cor:Acont} also the map $f \mapsto \mathscr A^f$ is continuous from $C^{0,1}$ to $\mathscr C$.
Now we use the density of $ C_T \bar{\mathcal C}_c^{0+}$ in $C_T \bar{\mathcal C}_c^{(-\beta)+}$ to conclude that the  local time property holds  and also \eqref{eq:AA} holds.
Point (c) in Definition \ref{def:Bsol} follows from the chain rule \eqref{eq:cr} (shown below) for $f= \text{id}$ using Remark \ref{rm:b}.

It is left to prove that the chain rule \eqref{eq:cr} holds.  
We define $W^X:= M^{\text{id}} - X_0$, which is a Brownian motion 
by Remark \ref{rm:Mid}. 
First we prove that \eqref{eq:cr} holds for $f \in \mathcal D_{\mathcal L}$. Indeed by Proposition \ref{pr:Mf} we know that  $M^f_t = f(0, X_0) + \int_0^t (\nabla f )(s, X_s) \cdot \di W_s^X$
so using that $X$ is a solution to  the martingale problem we easily get that \eqref{eq:cr} holds for  $f\in \mathcal D_{\mathcal L} $. 
In order to extend it to $f\in \mathcal D_{\mathcal L}^B$, we use the operator $T$ and rewrite the chain rule \eqref{eq:cr} as
\begin{equation}\label{eq:crg}
(Tg)(t, X_t) - (Tg)(0, X_0) -  \int_0^t \nabla( Tg) (s, X_s) \cdot \di W^X_s =  A_t^X(g),
\end{equation}
for all $g\in B = C_T \bar{\mathcal C}_c^{(-\beta)+}$. 
Notice that \eqref{eq:crg} holds for $g\in C_T \bar{\mathcal C}_c^{0+}$ since 
\eqref{eq:cr} holds for $f\in \mathcal D_{\mathcal L}$ with $\mathcal Lf =g$.
The left-hand side of \eqref{eq:crg} is continuous from $B$ to $\mathscr C$ because it is the composition of continuous operators. 
The right-hand side  of \eqref{eq:crg} extends from $g \in C_T \bar{\mathcal C}_c^{0+}$ to $g\in B$ by the  local time property (a). 
Since $  C_T \bar{\mathcal C}_c^{0+}$ is dense in $B$ then \eqref{eq:crg} extends to $B$, which is  \eqref{eq:cr} as wanted.
\end{proof}

\begin{remark}
Notice that if, in the previous  proof, we defined the solution operator $T$ using a different terminal condition $v_T \in \mathcal C^{(1+\beta)+} $, $v_T\neq 0$, it  would have led to the same operator $A^{X,B}$. This can be seen by noticing that the operator is the unique extension of the integral operator $l \mapsto \int_0^t l(s, X_s) \di s$.  
\end{remark}

We now introduce a refined notion of $B$-solution, which will be used later.
\begin{definition}\label{def:rlt}
Let $\mathbb P$ be a probability measure on $(\Omega, \mathcal F)$.
  We say that $X$  is a {\em reinforced $B$-solution} of  
\[
 X_t  = X_0 + W_t^X+ \int_0^t b(t, X_t) \di s
\]
if 
\begin{itemize}
\item[(i)]
it is a $B$-solution of the  SDE in the sense of Definition \ref{def:Bsol}; 
\item[(ii)]
 for any $f\in C_b^{1,2,B}$, where
 \[
 C_b^{1,2,B}:=\{ f\in C_b^{1,2} \text{ such that } \dot f +\frac12 \Delta f  \in C_T \bar {\mathcal C}_c^{0+}  \text{  and } \nabla f b \in B  \},
\]
  then
\begin{equation}\label{eq:rlt}
\int_0^t (\nabla f )(s, X_s) \cdot \di^- A_s^{X,B} ( b) =  A^{X,B}_t (\nabla f \, b), 
\end{equation}
where the forward integral $d^-A$ is the one given in \cite{russo_vallois93} in the one-dimensional case, which can be straightforwardly extended
to the vector case.  In particular, for a locally bounded integrand process $Y$ and
a continuous integrator process $X$ we denote
$$ \int_0^t Y_s \cdot \di^- X_s = \sum_{i =1}^d \int_0^t Y^i_s  \di^- X^i_s.$$
\end{itemize}
\end{definition}

\begin{remark}\label{rm:B}
\begin{itemize}
\item[(i)] When $b \in C_T\bar {\mathcal C}_c^{0+}  $  and $ f\in C^{1,2}_b$ then $\nabla f b \in C_T \bar {\mathcal C}_c^{0+}$ because  we can choose the approximating sequence $b_n \to b$ with compact support to construct the approximating sequence $\nabla f b_n \to \nabla f b$. In this case  equality \eqref{eq:rlt}  holds because both members are equal to $\int_0^t (\nabla f \, b)(s, X_s) \di s$. Thus it is natural to require the condition \eqref{eq:rlt}.
\item[(ii)]
In the case $B=C_T \bar {\mathcal C}_c^{(-\beta)+}$ we notice that the condition $ \nabla f b \in B$ is always satisfied. Indeed $\nabla f \in C_T \mathcal C^{\beta+}$ and $ b\in B $ 
 thus by \eqref{eq:bonyt} $\nabla f  b\in C_T \mathcal C^{(-\beta)+}$. Finally $\nabla f  b\in C_T \bar{ \mathcal C}_c^{(-\beta)+}$ because
we can construct the compactly supported sequence by considering $\nabla f  b_n$, where $ b_n $ is the compactly supported sequence that converges to $ b$ in $C_T \bar{ \mathcal C}_c^{(-\beta)+}$, using again \eqref{eq:bonyt}. Thus $ C^{1,2,B}_b$ reduces to 
\[
\{ f\in C_b^{1,2} \text{ such that } \dot f +\frac12 \Delta f  \in C_T \bar {\mathcal C}_c^{0+} \}
\]
and does not depend on $B$.
\end{itemize}
 \end{remark}

 Next we want to consider the case when $X$
 is an $\mathcal F^X$-Dirichlet process.
 In this case we  show that  the notion of solution of the  martingale problem with distributional drift is equivalent to the one of the reinforced $B$-solution. 
 Let us start with a remark.   
 
 \begin{remark}\label{rm:XX}
 If $X$ is a $B$-solution which is an $\mathcal F^X$-Dirichlet process, then $ [X, X]_t = t \text{I}_d$. Indeed, by Remark \ref{rm:cr} we have that $X_t = W_t^X + A^{X, B}_t(b)$ is the standard decomposition of the weak Dirichlet process $X$, and  by the uniqueness of the
weak Dirichlet decomposition  and the fact that $X$ is an $\mathcal F^X$-Dirichlet process  then $A^{X, B}_t(b)$
is a zero quadratic variation process and so $[X,X]_t = t \text{I}_d$.
 \end{remark}
\begin{proposition}\label{pr:Xfqvi}
Let  $B= C_T \bar{\mathcal C}_c^{(-\beta)+}$. If $(X, \mathbb P)$ satisfies
the martingale problem with distributional drift $b$
and  $X$ is an $\mathcal F^X$-Dirichlet process,
then $X$ is a reinforced $B$-solution according to Definition \ref{def:rlt}. 
\end{proposition}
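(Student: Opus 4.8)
The plan is to check the two conditions of Definition \ref{def:rlt} separately. Condition (i), that $X$ is a $B$-solution with $B = C_T\bar{\mathcal C}_c^{(-\beta)+}$, is nothing but Theorem \ref{thm:cr} applied to $(X,\mathbb P)$, which also produces the Brownian motion $W^X = M^{\text{id}}-X_0$ and the standard decomposition $X_t = X_0 + W^X_t + A^{X,B}_t(b)$ (Remark \ref{rm:cr}). So the real work is condition (ii), the forward-integral identity \eqref{eq:rlt}. Fix $f\in C_b^{1,2,B}$; by Remark \ref{rm:B}(ii) this means $f\in C_b^{1,2}$ with $\dot f + \tfrac12\Delta f\in C_T\bar{\mathcal C}_c^{0+}$, the membership $\nabla f\, b\in B$ being automatic. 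I would first note that such an $f$ lies in $\mathcal D_{\mathcal L}^B$: since $f\in C_b^{1,2}$ its gradient $\nabla f(t,\cdot)$ is bounded and Lipschitz, hence in $\mathcal C^{\beta}$ uniformly in $t$, and $\dot f$ is bounded and continuous, so $f\in\mathcal D_{\mathcal L}^0$; moreover $\mathcal L f = (\dot f + \tfrac12\Delta f) + \nabla f\, b$ is the sum of an element of $C_T\bar{\mathcal C}_c^{0+}\subset B$ and an element of $B$, hence in $B$.

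Next I would apply the chain rule \eqref{eq:cr} of Theorem \ref{thm:cr} to $f\in\mathcal D_{\mathcal L}^B$ and split $A^{X,B}_t(\mathcal L f)$ using the linearity of the continuous extension $A^{X,B}$ and the fact that on $C_T\bar{\mathcal C}_c^{0+}$ it agrees with the genuine integral; this gives the first representation
\[
f(t,X_t) = f(0,X_0) + \int_0^t (\dot f + \tfrac12\Delta f)(s,X_s)\,\di s + \int_0^t \nabla f(s,X_s)\cdot \di W^X_s + A^{X,B}_t(\nabla f\, b).
\]
For the second representation I would use that $X$ is an $\mathcal F^X$-Dirichlet process which is a $B$-solution, so by Remark \ref{rm:XX} one has $[X,X]_t = t\,\text{I}_d$ and in particular $X$ has finite quadratic variation; since $f\in C^{1,2}$, the It\^o formula for finite quadratic variation processes (calculus via regularization, \cite{russo_vallois95}) yields
\[
f(t,X_t) = f(0,X_0) + \int_0^t (\dot f + \tfrac12\Delta f)(s,X_s)\,\di s + \int_0^t \nabla f(s,X_s)\cdot \di^- X_s.
\]
Then, using the standard decomposition $X_t = X_0 + W^X_t + A^{X,B}_t(b)$, the additivity of the forward integral, and the fact that a forward integral of an adapted continuous integrand against the Brownian motion $W^X$ reduces to the It\^o integral, I would rewrite the forward integral as $\int_0^t \nabla f(s,X_s)\cdot\di^- X_s = \int_0^t \nabla f(s,X_s)\cdot \di W^X_s + \int_0^t \nabla f(s,X_s)\cdot\di^- A^{X,B}_s(b)$.

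Comparing the two representations of $f(t,X_t)$ and cancelling the three common terms leaves exactly $A^{X,B}_t(\nabla f\, b) = \int_0^t \nabla f(s,X_s)\cdot\di^- A^{X,B}_s(b)$, which is \eqref{eq:rlt}; together with condition (i) this shows $X$ is a reinforced $B$-solution. The step I expect to be the main obstacle is the last decomposition of the forward integral: one must invoke precisely the linearity of the forward integral and its compatibility with It\^o integration for adapted integrands in the Russo--Vallois framework, and also make sure the It\^o formula is applied in a version valid for a merely bounded (not $C^{1,2}_{buc}$) $C^{1,2}$ function of the finite quadratic variation process $X$ — both are standard, but should be cited carefully.
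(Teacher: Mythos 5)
Your proof is correct and follows essentially the same route as the paper's. You verify (i) via Theorem \ref{thm:cr}, note $f\in\mathcal D_{\mathcal L}^B$, compare the chain rule \eqref{eq:cr} with the Russo--Vallois It\^o formula for the finite quadratic variation process $X$ (using Remark \ref{rm:XX}), and cancel; the paper organizes this slightly differently by writing the weak Dirichlet decomposition via Proposition \ref{pr:Mf}, applying It\^o's formula already split along $X = X_0 + W^X + A^{X,B}(b)$, and invoking \eqref{eq:AA} at the end, but the content is the same permuted, and your flagged concern about the forward-integral decomposition and the version of It\^o's formula corresponds exactly to the paper's ``slight adaptation'' of \cite[Theorem 2.2]{russo_vallois95}.
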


\begin{proof}
First we notice that point (i) of Definition \ref{def:rlt} is satisfied by Theorem \ref{thm:cr}.
Next we check point (ii) and we write $A^X$ instead of $A^{X,B}$ for ease of notation.
Let $f\in C_b^{1,2,B}$. 
 Using the weak Dirichlet decomposition  since  $f\in C_b^{1,2}$  we have
\begin{align} \label{eq:fA}
\nonumber
  f(t, X_t) &=  M_t^f + \mathcal A^f_t\\
&= f(0, X_0) + \int_0^t (\nabla f)(r, X_r) \cdot \di W^X_r + \mathcal A^f_t,
\end{align}
having used Proposition \ref{pr:Mf}  to express the  martingale component part. 

On the other hand,
it easily follows that $f\in  \mathcal D_{\mathcal L}^B$ defined in \eqref{eq:DLB},
because $\mathcal L f = \nabla f b + g$,  where $g:= \dot f + \frac12 \Delta f \in C_T \bar{\mathcal C}_c^{0+} \subset B $ by assumption,  and $\nabla f b \in B$ as seen in Remark \ref{rm:B}, item (ii).
Since $X$ is a $B$-solution and an $\mathcal F^X$-Dirichlet process,
by Remark \ref{rm:XX} we have  $[X,X]_t = t \text{I}_d$. 
So 
by applying a slight adaptation of It\^o's formula \cite[Theorem 2.2]{russo_vallois95} to $f(t, X_t) $ for $f\in C_b^{1,2}$ we have
\begin{align}
  \nonumber f(t, X_t) =& f(0, X_0) + \int_0^t (\nabla f)(r, X_r)
                         \cdot \di W^X_r +  \int_0^t (\nabla f)(r, X_r) \cdot \di^- A^X_r (b)\\
\nonumber &+ \int_0^t (\partial_ t f+ \frac12 \Delta f)(r, X_r) \di r\\
\nonumber  =& f(0, X_0) + \int_0^t (\nabla f)(r, X_r) \cdot \di W^X_r +  \int_0^t (\nabla f)(r, X_r) \cdot \di^- A^X_r(b)\\
 &+A^X_t (\partial_ t f+ \frac12 \Delta f).
\label{eq:fAA}
\end{align}
We recall that $\partial_ t f+ \frac12 \Delta f \in C_T \bar{\mathcal C}_c^{0+}$ since $f \in C^{1,2,B}_b$, and so $A^X_t (\partial_ t f+ \frac12 \Delta f )$ is trivially well-defined. On the other hand $\partial_ t f+ \frac12 \Delta f = \mathcal L f - \nabla f \, b$, where $ \mathcal L f,\nabla f \, b \in B$ as noticed in Remark \ref{rm:B} item (ii), thus we can write
$$  A^X_t (\partial_ t f+ \frac12 \Delta f) =  A^X_t (\mathcal L f - \nabla f \, b ) = A^X_t (\mathcal L f )- A^X_t(  \nabla f \, b ).$$
Plugging this into
\eqref{eq:fAA}  and  comparing with \eqref{eq:fA} we get
\[
\mathcal A^f_t = \int_0^t (\nabla f)(r, X_r)  \cdot \di^- A^X_r \\
+A^X_t (\mathcal L f ) - A^X_t( \nabla f \, b ),
\]
hence applying \eqref{eq:AA} we conclude.
\end{proof}

The next result is the converse statement of Proposition \ref{pr:Xfqvi}. 
\begin{proposition}\label{pr:Xfqv}
  Let  $B= C_T \bar{\mathcal C}_c^{(-\beta)+}$  and $b \in B$. Let  $\mathbb P$ be a probability measure on  $(\Omega, \mathcal F)$. Let $X$ be a reinforced $B$-solution according to Definition \ref{def:rlt},
which is also an $\mathcal F^X$-Dirichlet process.
  Then $(X, \mathbb P)$ solves the martingale problem with distributional drift $b$. 
\end{proposition}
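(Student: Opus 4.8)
The plan is to reduce the proposition to the chain rule \eqref{eq:cr}: I will show that for every $f\in\mathcal D_{\mathcal L}$ one has
\[
f(t,X_t)=f(0,X_0)+\int_0^t(\nabla f)(s,X_s)\cdot\di W^X_s+A^{X,B}_t(\mathcal L f).
\]
Since $\mathcal L f\in C_T\bar{\mathcal C}_c^{0+}$, on which $A^{X,B}$ coincides with the integral operator $l\mapsto\int_0^\cdot l(s,X_s)\,\di s$ (it is by construction its continuous extension), the last term equals $\int_0^t(\mathcal L f)(s,X_s)\,\di s$; hence $f(t,X_t)-f(0,X_0)-\int_0^t(\mathcal L f)(s,X_s)\,\di s=\int_0^t(\nabla f)(s,X_s)\cdot\di W^X_s$ is a (true, since $\nabla f$ is bounded) martingale, which is exactly the requirement of Definition \ref{def:MP}.

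The first step is to prove \eqref{eq:cr} for $f\in C_b^{1,2,B}$, arguing as in the proof of Proposition \ref{pr:Xfqvi} but replacing the use of Proposition \ref{pr:Mf} (which presupposes the martingale problem) by the It\^o formula of Russo--Vallois. Since $X$ is a $B$-solution and an $\mathcal F^X$-Dirichlet process, Remark \ref{rm:XX} gives $[X,X]_t=t\,\text{I}_d$, so \cite[Theorem 2.2]{russo_vallois95} (in the vector form used in the proof of Proposition \ref{pr:Xfqvi}) applies to $f(t,X_t)$ for $f\in C_b^{1,2}$ and yields $f(t,X_t)=f(0,X_0)+A^{X,B}_t(\partial_t f+\tfrac12\Delta f)+\int_0^t(\nabla f)(s,X_s)\cdot\di^- X_s$. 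Splitting $\di^- X=\di^- W^X+\di^- A^{X,B}(b)$ via point (c) of Definition \ref{def:Bsol}, using that the forward integral against $W^X$ reduces to the It\^o integral, invoking the reinforced property \eqref{eq:rlt} to rewrite $\int_0^t(\nabla f)(s,X_s)\cdot\di^- A^{X,B}_s(b)=A^{X,B}_t(\nabla f\,b)$, and using linearity of $A^{X,B}$ together with $\partial_t f+\tfrac12\Delta f+\nabla f\,b=\mathcal L f$ (all three summands in $B$), one obtains \eqref{eq:cr} for $f\in C_b^{1,2,B}$.

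The main work, and the main obstacle, is the approximation passing from $C_b^{1,2,B}$ to $\mathcal D_{\mathcal L}$: an element of $\mathcal D_{\mathcal L}$ is never $C^{1,2}$ in time, and the compositions with $\phi$ used elsewhere are not $C^{1,2}$ either, so one must approximate by classical solutions of PDEs with \emph{compactly supported smooth} drifts. Fix $f\in\mathcal D_{\mathcal L}$, set $g:=\mathcal L f\in C_T\bar{\mathcal C}_c^{0+}$ and $f_T:=f(T)\in\bar{\mathcal C}_c^{(1+\beta)+}$. Using Assumption \ref{ass:param-bc}, pick $b^n$ smooth with compact support, $b^n\to b$ in $C_T\mathcal C^{-\beta}$ with $\sup_n\|b^n\|_{C_T\mathcal C^{-\beta+\eps}}<\infty$, and $g^n,f^n_T$ smooth with compact support, $g^n\to g$ and $f^n_T\to f_T$ in the respective spaces (mollification preserves H\"older regularity, \cite[Lemma 2.4]{issoglio_russoPDEa}). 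Let $\mathcal L^n$ be the operator $\mathcal L$ with $b$ replaced by $b^n$, and $f^n$ the unique solution in $C_T\mathcal C^{(1+\beta)+}$ of $\mathcal L^n f^n=g^n$, $f^n(T)=f^n_T$, which is classical since $b^n\in C_T\mathcal C^{0+}$ (cf.\ \cite[Remark 4.12]{issoglio_russoPDEa}) and belongs to $C_b^{1,2}$ by classical parabolic estimates with bounded smooth coefficients. Because $\partial_t f^n+\tfrac12\Delta f^n=g^n-\nabla f^n\,b^n$ is smooth with compact support (as $b^n$ is), we get $f^n\in C_b^{1,2,B}$, so \eqref{eq:cr} holds for $f^n$ with $\mathcal L f^n=g^n+\nabla f^n(b-b^n)$.

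Finally I would let $n\to\infty$. The continuity of the PDE \eqref{eq:PDE} with respect to its data (Section \ref{ssc:pointwise}) gives $f^n\to f$ in $C_T\mathcal C^{(1+\beta)+}$, whence $f^n(\cdot,X_\cdot)\to f(\cdot,X_\cdot)$ and $\int_0^\cdot(\nabla f^n)(s,X_s)\cdot\di W^X_s\to\int_0^\cdot(\nabla f)(s,X_s)\cdot\di W^X_s$ u.c.p.; an interpolation argument (the bounded sequence $b^n-b$ in $C_T\mathcal C^{-\beta+\eps}$ converging to $0$ in $C_T\mathcal C^{-\beta}$ also converges in $C_T\mathcal C^{-\beta+\eps'}$ for $0<\eps'<\eps$) together with Bony's estimate \eqref{eq:bonyt} yields $\mathcal L f^n\to\mathcal L f$ in $B=C_T\bar{\mathcal C}_c^{(-\beta)+}$, so $A^{X,B}_\cdot(\mathcal L f^n)\to A^{X,B}_\cdot(\mathcal L f)$ u.c.p.\ by continuity of $A^{X,B}$. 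Passing to the limit in \eqref{eq:cr} for $f^n$ gives \eqref{eq:cr} for $f$, and the proof concludes as in the first paragraph. The delicate points are checking that the approximants genuinely lie in $C_b^{1,2,B}$ (this is precisely why compact support of $b^n$ and $g^n$ is needed) and the convergence $\mathcal L f^n\to\mathcal L f$ within the inductive limit $C_T\bar{\mathcal C}_c^{(-\beta)+}$.
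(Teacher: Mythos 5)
Your proof is correct and follows essentially the same route as the paper's: apply It\^o's formula (for Dirichlet processes with $[X,X]_t = t\,\mathrm{I}_d$, cf.\ Remark \ref{rm:XX}) to classical $C^{1,2}_b$ solutions of approximating PDEs, use the reinforced local time property \eqref{eq:rlt} to interpret the forward integral against $A^{X,B}(b)$, and pass to the limit via the continuity of $A^{X,B}$ and of the PDE solution map. The paper's version is slightly leaner — it keeps $g=\mathcal L f$ and $f(T)$ fixed and only replaces $b$ by $b_n$, it does not require $b_n$ compactly supported (that $\nabla f_n\,b_n\in C_T\bar{\mathcal C}_c^{0+}$ already follows from Remark \ref{rm:B}(i), so the extra mollifications are unnecessary), and it applies It\^o directly to $f_n$ rather than first establishing \eqref{eq:cr} on all of $C^{1,2,B}_b$ — but these are cosmetic reorganizations, not a different argument.
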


\begin{proof}
We need to show that for every $f\in \mathcal D_{\mathcal L}$ 
\[
f(t, X_t) -f(0, X_0) - \int_0^t (\mathcal Lf )(r, X_r) \di r
\]
is an $\mathcal F^X$-local martingale under $\mathbb P$. 
Since $f\in \mathcal D_{\mathcal L}$ we know that there exists  $l\in C_T \bar{\mathcal C}_c^{0+}$  such that $\mathcal L f = l$. 
By the density of $\mathcal S$ into $ \bar{\mathcal C}_c^{(-\beta)+}$, see \cite[Lemma 5.4]{issoglio_russoPDEa} and using \cite[Remark B.1]{issoglio_russoMK} we see that   $C_T\mathcal S$ is dense in $ C_T\bar{\mathcal C}_c^{(-\beta)+}$. Thus 
we can find a sequence  $(b_n) $ such that $ b_n \in C_T \bar{\mathcal C}_c^{0+}$  and $b_n \to b$ in $C_T {\mathcal C}^{(-\beta)+}$.
Let $\mathcal L_n u:= \partial_t u + \frac12 \Delta u + \nabla u \, b_n$ and let us consider the  PDE $\mathcal L_n f_n = l$ and $f_n(T) = f(T)$. By \cite[Remark 4.12]{issoglio_russoPDEa} we know that the unique solution $f_n\in C_T \mathcal C^{(1+\beta)+}$  is also the classical solution as given in \cite[Theorem 5.1.9]{lunardi95}, hence  $f_n\in  C^{1,2}_b  $. 
We recall that $X$ is a $B$-solution in the sense of Definition \ref{def:Bsol} and it is an $\mathcal F^X$-Dirichlet process with decomposition $X = W^X + A^{X,B}$ by Remark \ref{rm:XX}. By
 It\^o's formula \cite[Theorem 6.1]{rv4}, taking into account the  linearity of $A^{X,B}$ and the fact that $b_n \in C_T \bar{\mathcal C}_c^{0+}$, we have
\begin{align}
\label{eq:fnl}
\nonumber
f_n(t, X_t) =& f_n (0, X_0) + \int_0^t (\nabla f_n )(s, X_s) \cdot \di W_s
+ \int_0^t(\nabla f_n )(s, X_s)  \cdot \di^- A^{X,B}_s(b-b_n) \\ \nonumber
&+ \int_0^t (\nabla f_n )(s, X_s) b_n (s, X_s) \di s+ \frac12 \int_0^t (\Delta f_n)(s, X_s ) \di s + \int_0^t (\partial_s f_n)(s, X_s) \di s\\\nonumber
 =& f_n (0, X_0) + \int_0^t (\nabla f_n )(s, X_s) \cdot \di W_s + \int_0^t(\nabla f_n )(s, X_s) \cdot \di^- A^{X,B}_s(b-b_n)  \\ 
&+ \int_0^t l(s, X_s ) \di s ,
\end{align}
having used $\mathcal L_n f_n =l$  in  the last equality.
Using again the linearity of $A^{X,B}$ we have 
\begin{align}\label{eq:Abn}\nonumber
\int_0^t &(\nabla f_n )(s, X_s) \cdot \di^- A^{X,B}_s(b-b_n) \\
&=\int_0^t(\nabla f_n )(s, X_s) \cdot \di^- A^{X,B}_s(b) - \int_0^t(\nabla f_n )(s, X_s) \cdot \di^- A^{X,B}_s(b_n). 
\end{align}
The second integral on the RHS is  equal to $A^{X,B}_t (\nabla f_n \,b_n)$  by Remark \ref{rm:B} item (i) since $f_n \in C^{1,2}_b$ and $b_n \in C_T \bar  {\mathcal C}_c^{0+}$. 
Since $X$ is a reinforced $B$-solution, by \eqref{eq:rlt} the  first integral on the RHS  of \eqref{eq:Abn} gives $A^{X,B}_t (\nabla f_n \,b) $ so by additivity  we rewrite \eqref{eq:Abn} as
\begin{equation}\label{eq:Abni}
\int_0^t (\nabla f_n )(s, X_s) \cdot \di^- A^{X,B}_s(b-b_n) =
A^{X,B}_t (\nabla f_n \,b-\nabla f_n \,b_n).
\end{equation}
Plugging \eqref{eq:Abni} into \eqref{eq:fnl} we have 
\begin{align}\label{eq:fnm}
  f_n(t, X_t) &-f_n (0, X_0) - A^{X,B}_t (\nabla f_n \,b-\nabla f_n \,b_n) - \int_0^t l(s, X_s ) \di s  
  = \int_0^t (\nabla f_n )(s, X_s) \cdot \di W_s.
\end{align} 
Since $b_n \to b$ in $C_T \mathcal C^{-\beta}$ we then have  $f_n \to f$ in $C_T \mathcal C^{(1+\beta)+}$  and $ \nabla f_n \to \nabla f$ in   $C_T \mathcal C^{\beta+}$  by continuity results for PDE \eqref{eq:PDE}, see Section \ref{ssc:pointwise}. Thus the right-hand side of \eqref{eq:fnm} converges u.c.p.\ to $\int_0^t (\nabla f )(s, X_s) \cdot \di W_s$, which is a local martingale under  $\mathbb P$. Moreover the left-hand side of \eqref{eq:fnm} converges u.c.p. to $f(t, X_t) -f(0, X_0) - \int_0^t l(s, X_s ) \di s $ and since $l = \mathcal L f$ we conclude.  
\end{proof}

As a consequence we get a characterisation property for solutions of the SDE in terms of solutions to martingale problem. 

\begin{corollary}\label{cor:Xsoliff}
  Let  $B= C_T \bar{\mathcal C}_c^{(-\beta)+}$ and $b \in B$. Let  $\mathbb P$ be a probability measure on $(\Omega, \mathcal F)$. Suppose that $X$ is an $\mathcal F^X$-Dirichlet process. 
Then $X$ is a reinforced $B$-solution of the SDE
\[
X_t = X_0 + W_t + \int_0^t b(t, X_t) \di t
\]
if and only if $(X, \mathbb P)$ solves the martingale problem with distributional drift $b$ and initial condition $X_0 \sim \mu$.
\end{corollary}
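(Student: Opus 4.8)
The plan is to obtain the corollary as the conjunction of the two preceding propositions. First I would record that, under the standing Assumption~\ref{ass:param-bc}, we have $b\in C_T\bar{\mathcal C}_c^{(-\beta)+}$, so that the space $B$ and the notions of $B$-solution (Definition~\ref{def:Bsol}) and reinforced $B$-solution (Definition~\ref{def:rlt}) are precisely the ones used in Propositions~\ref{pr:Xfqvi} and~\ref{pr:Xfqv}; throughout, $\mu$ denotes the law of $X_0$ under $\mathbb P$.

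For the implication ``martingale problem $\Longrightarrow$ reinforced $B$-solution'', I would simply invoke Proposition~\ref{pr:Xfqvi}: if $(X,\mathbb P)$ satisfies the martingale problem with distributional drift $b$ and $X$ is an $\mathcal F^X$-Dirichlet process (which is part of the hypothesis of the corollary), then $X$ is a reinforced $B$-solution. The underlying work — the local time property and chain rule from Theorem~\ref{thm:cr}, together with the It\^o-type formula of \cite{russo_vallois95} applied to $X$ with $[X,X]_t=t\,\text{I}_d$ (Remark~\ref{rm:XX}) in order to produce the reinforcement identity~\eqref{eq:rlt} — is already contained there, so nothing new is needed at this stage.

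For the converse, I would invoke Proposition~\ref{pr:Xfqv}: if $X$ is a reinforced $B$-solution which is moreover an $\mathcal F^X$-Dirichlet process, then $(X,\mathbb P)$ solves the martingale problem with distributional drift $b$, its initial condition being $X_0\sim\mu$ by construction. Here the substance — the approximation $b_n\to b$ in $C_T\mathcal C^{-\beta}$, the solution of the regularised PDEs $\mathcal L_n f_n=l$ in $C^{1,2}_b$, the It\^o formula for the Dirichlet process $X$, the rewriting of the forward integrals via~\eqref{eq:rlt} and the linearity of $A^{X,B}$, and the ucp passage to the limit using the continuity of the PDE solution map recalled in Section~\ref{ssc:pointwise} — has been carried out in that proposition.

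Consequently there is no genuine obstacle remaining at the level of the corollary itself: the only point meriting a line of care is the identification of the ambient space $B$, so that the two propositions apply as stated, after which the two implications combine to give the claimed equivalence. The real content of the result therefore lies in Propositions~\ref{pr:Xfqvi} and~\ref{pr:Xfqv}, and the corollary merely packages it as a characterisation.
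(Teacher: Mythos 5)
Your proof is correct and is exactly the paper's proof: Corollary~\ref{cor:Xsoliff} is obtained by combining Propositions~\ref{pr:Xfqvi} and~\ref{pr:Xfqv}. Your care about the ambient space is also well placed, since the corollary's stated $B=C_T\mathcal C^{-\beta}$ should be read as $C_T\bar{\mathcal C}_c^{(-\beta)+}$ (the space appearing in both propositions and consistent with Assumption~\ref{ass:param-bc}, and the one in which $C_T\bar{\mathcal C}_c^{0+}$ is actually dense).
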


\begin{proof}
  Combine Proposition \ref{pr:Xfqvi}  and Proposition
  \ref{pr:Xfqv}. 
\end{proof}

\appendix
\section{Some useful results from the literature}\label{app:lunardi}

In this Appendix we recall a useful theorem from  \cite{lunardi95} on existence and regularity results of parabolic PDEs. Before stating the theorem, we recall the notation used in the book, see \cite[Chapter 5]{lunardi95}.

The classical H\"older space  $\mathcal C^{2+\nu}(\R^d)$ for $0<\nu<1$ was introduced in Section \ref{sc:prelim}.  For functions of two variables $(t,x)\in[0,T]\times\R^d$  we consider the spaces introduced in  \cite[Section 5.1]{lunardi95}
\begin{align*}
C^{0,\nu}([0,T]\times \R^d) := \{ & f\in C([0,T]\times \R^d)  : f(t, \cdot)\in \mathcal  C^\nu(\R^d) \, \forall t\in[0,T], \sup_{t\in[0,T]} \|f(t, \cdot)\|_{\mathcal C^\nu}<\infty\},
\end{align*}
with norm
\[
\|f\|_{C^{0,\nu}([0,T]\times \R^d)} := \sup_{t\in[0,T]}  \|f(t, \cdot)\|_{\mathcal C^\nu}
\]
and 
\begin{align*}
C^{1,2+\nu}([0,T]\times \R^d) := \{ & f\in C^{1,2}([0,T]\times \R^d) :\partial_t f, \partial_{x_i, x_j} f \in C^{0,\nu} ([0,T]\times\R^d), \, \forall i, j=1, \ldots, d\}
\end{align*}
with norm 
\begin{align*}
\|f\|_{C^{1,2+\nu}([0,T]\times \R^d)} :=& \|f\|_\infty + \sum_{i=1}^d \|\partial_i f\|_\infty   + \|\partial_t f\|_\infty +  \sum_{i,j=1}^d \|\partial_{ij} f\|_{C^{0,\nu}([0,T]\times \R^d)}.
\end{align*}

\begin{remark}\label{rm:x1x2}
Note that   the following is an equivalent norm in  $\mathcal C^\nu$ 
\[
\sup_x |f(x)| +\sup_{x_1, x_2\, x_1\neq x_2} \frac{|f(x_1) - f(x_2)|}{|x_1 - x_2|^\nu},
\]
namely we can freely choose not to restrict $x_1, x_2$ to a bounded interval. 
\end{remark}

\begin{remark}\label{rm:C12buc}
Note that if $ f\in C_T\mathcal C^\nu$ then trivially we have $f\in C^{0,\nu}([0,T]\times \R^d)$ and  if $f\in C^{1,2+\nu}([0,T]\times \R^d)$ then trivially $f\in C^{1,2}_{buc}$.
\end{remark}

Let $a_{i,j}, b_i, c,f:[0,T]\times \R^d \to \R$ be uniformly continuous  and belonging to $C^{0,\nu}([0,T]\times\R^d)$ with $0<\nu<1$. Let $a$ satisfy the uniform ellipticity condition $\sum_{i,j=1}^d a_{i,j} (t,x) \xi_i \xi_j \geq \lambda |\xi|^2$ for $t\in[0,T],\,  x, \xi\in \R^d$, for some $\lambda>0$.
Let  $u_0\in \mathcal  C^{2+\nu}(\R^d)$. 

 Let us consider the second order operator
\[\mathcal A(t,x) = \sum_{i,j=1}^d a_{i,j}(t,x) \partial_{x_i x_j} + \sum_{i=1}^d b_{i}(t,x) \partial_{x_i} + c(t,x) \] 
and the PDE
\begin{equation}\label{eq:PDELunardi}
\left\{
\begin{array}{l}
\partial_t u(t,x) = \mathcal A(t,x) u(t,x) + f(t,x), \, (t,x) \in [0,T]\times\R^d\\
u(0,x) = u_0(x), \, x\in\R^d.
\end{array}
\right.
\end{equation}
\begin{theorem}[Theorem 5.1.9 in  \cite{lunardi95}]\label{thm:lunardi}
Let $a_{i,j}, b_i, c, f, u_0$ be as above.
Then PDE \eqref{eq:PDELunardi} has a unique solution $u\in C^{1,2+\nu}([0,T]\times\R^d )$ and
\[
\|u\|_{ C^{1,2+\nu}([0,T]\times\R^d )} \leq C ( \|u_0\|_{ \mathcal C^{2+\nu}(\R^d )} + \|f\|_{ C^{0,\nu}([0,T]\times\R^d )}),
\]
for some $C>0$.
\end{theorem}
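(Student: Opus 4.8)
This is the classical parabolic Schauder theorem (it is \cite[Theorem 5.1.9]{lunardi95}); I sketch a self-contained proof along the lines of the Schauder method. \emph{Step 0: reduction to zero initial datum.} Set $w(t,x):=u(t,x)-u_0(x)$. Then $u$ solves \eqref{eq:PDELunardi} if and only if $w\in C^{1,2+\nu}$ solves $\partial_t w=\mathcal A(t,x)w+\tilde f$ with $w(0,\cdot)=0$, where $\tilde f:=f+\mathcal A(\cdot)u_0$, i.e. $\tilde f:=f+\sum_{i,j}a_{ij}\partial_{x_ix_j}u_0+\sum_i b_i\partial_{x_i}u_0+c\,u_0$. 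Since $u_0\in\mathcal C^{2+\nu}$ and the coefficients lie in $C^{0,\nu}$, each product (being a product of functions that are bounded and $\nu$-Hölder in $x$, uniformly in $t$) lies in $C^{0,\nu}([0,T]\times\R^d)$, with $\|\tilde f\|_{C^{0,\nu}}\le C(\|f\|_{C^{0,\nu}}+\|u_0\|_{\mathcal C^{2+\nu}})$. Hence it suffices to treat $u_0=0$ and prove solvability in $C^{1,2+\nu}$ together with the bound $\|w\|_{C^{1,2+\nu}}\le C\|\tilde f\|_{C^{0,\nu}}$.

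\emph{Step 1: the a priori estimate.} The core claim is that any $w\in C^{1,2+\nu}$ with $w(0,\cdot)=0$ solving $\partial_t w=\mathcal A w+g$ satisfies
\[
\|w\|_{C^{1,2+\nu}([0,T]\times\R^d)}\le C\,\|g\|_{C^{0,\nu}([0,T]\times\R^d)},
\]
with $C$ depending only on $T,d,\nu,\lambda$, the $C^{0,\nu}$-norms and the moduli of continuity of the $a_{ij},b_i,c$. One proves this by \emph{freezing coefficients}: for a constant-coefficient model operator $\partial_t-\sum_{i,j}a_{ij}(t_0,x_0)\partial_{x_ix_j}$ the fundamental solution is an explicit Gaussian, and the parabolic Newtonian potential $w=\Gamma\ast g$ obeys the singular-integral estimate $\|\partial_{x_ix_j}w\|_{C^{0,\nu}}+\|\partial_t w\|_{C^{0,\nu}}\le C\|g\|_{C^{0,\nu}}$ on $[0,T]$ with vanishing initial trace. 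Then cover $\R^d$ by balls of small radius $\rho$, take a subordinate partition of unity $\{\zeta_k\}$, and on each ball write $\partial_t(\zeta_k w)=\mathcal A(t_k,x_k)(\zeta_k w)+[\text{commutator with }\zeta_k]+\sum_{i,j}(a_{ij}(t,x)-a_{ij}(t_k,x_k))\partial_{x_ix_j}(\zeta_k w)+[\text{lower order}]$. The oscillation term has small coefficient (choose $\rho$ so that $\sup|a_{ij}(t,x)-a_{ij}(t_k,x_k)|$ is small by uniform continuity), hence can be absorbed into the left-hand side; the commutator and lower-order terms are bounded by $\|w\|_{C^{1,1+\nu}}$, which by the interpolation inequality is $\le\eps\|w\|_{C^{1,2+\nu}}+C_\eps\|w\|_\infty$; and $\|w\|_\infty\le C\|g\|_\infty$ follows from the maximum principle (after substituting $w\mapsto e^{-\kappa t}w$ with $\kappa$ large to neutralise a possibly positive $c$). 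Summing the local estimates over $k$ (with finite overlap) and absorbing gives the claimed bound.

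\emph{Step 2: existence by the continuity method, and uniqueness.} For $\sigma\in[0,1]$ put $\mathcal A_\sigma:=(1-\sigma)\tfrac12\Delta+\sigma\mathcal A$; each $\mathcal A_\sigma$ is uniformly elliptic with constant $\min(\tfrac12,\lambda)$ and has coefficients with $C^{0,\nu}$-norms controlled independently of $\sigma$, so the a priori estimate of Step 1 holds with a constant $C$ uniform in $\sigma$. For $\sigma=0$ the problem $\partial_t w=\tfrac12\Delta w+g$, $w(0,\cdot)=0$ is solved explicitly by heat-kernel convolution, and classical estimates place the solution in $C^{1,2+\nu}$. Letting $\Sigma:=\{\sigma\in[0,1]:\partial_t w=\mathcal A_\sigma w+g,\ w(0,\cdot)=0\text{ is solvable in }C^{1,2+\nu}\text{ for every }g\in C^{0,\nu}\}$, the uniform a priori bound makes $\Sigma$ both open and closed in $[0,1]$ via the standard Banach-space perturbation argument; since $0\in\Sigma$ we conclude $1\in\Sigma$, i.e. existence for $\mathcal A$. (This is precisely the point at which Lunardi's analytic-semigroup/optimal-regularity machinery could be invoked instead: $\mathcal A$ generates an analytic semigroup on $\mathcal C^\nu(\R^d)$ and maximal $C^\nu$-regularity yields the same conclusion.) Uniqueness: two solutions of \eqref{eq:PDELunardi} differ by a $w\in C^{1,2+\nu}$ solving the homogeneous problem with zero data, and Step 1 forces $\|w\|_{C^{1,2+\nu}}=0$. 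Finally, combining the a priori bound on $w$ with $\|\tilde f\|_{C^{0,\nu}}\le C(\|f\|_{C^{0,\nu}}+\|u_0\|_{\mathcal C^{2+\nu}})$ and $u=w+u_0$ gives the stated estimate $\|u\|_{C^{1,2+\nu}}\le C(\|u_0\|_{\mathcal C^{2+\nu}}+\|f\|_{C^{0,\nu}})$.

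\emph{Main obstacle.} The technical heart is Step 1: the Schauder bound for the constant-coefficient parabolic model operator — equivalently, the Hölder continuity of the second spatial derivatives (and the time derivative) of the parabolic Newtonian potential — together with the careful localization needed to absorb the coefficient-oscillation term. This forces a quantitative choice of the ball radius $\rho$ in terms of the modulus of continuity of $a$, and a finite-overlap covering of $\R^d$ so that summing the local estimates faithfully reproduces the global $C^{1,2+\nu}$-norm; handling the first-order and zeroth-order terms and the non-sign-definite $c$ via interpolation and the maximum principle is routine by comparison.
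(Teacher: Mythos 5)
The paper does not prove this statement at all: it is quoted verbatim from Lunardi's book (Theorem 5.1.9 there) and recalled in the appendix purely for the reader's convenience, so there is no in-paper argument to compare yours against. Your sketch is a faithful outline of the classical Schauder route: reduction to zero initial datum (valid, since $\mathcal A u_0\in C^{0,\nu}$ when $u_0\in\mathcal C^{2+\nu}$ and the coefficients are in $C^{0,\nu}$), the a priori estimate by freezing coefficients and absorbing the oscillation and lower-order terms via interpolation and the maximum principle, and then existence by the continuity method starting from the heat equation, with uniqueness falling out of the a priori bound. This differs in flavour from Lunardi's own proof, which runs through analytic semigroups and optimal H\"older regularity for the autonomous model problems rather than direct potential-theoretic estimates on the parabolic Newtonian potential --- a distinction you yourself flag. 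Two minor points worth being aware of if you were to write this out in full: the maximum-principle step $\|w\|_\infty\le C\|g\|_\infty$ on the unbounded domain $\R^d$ needs the boundedness of $w$ (which you have, since $w\in C^{1,2+\nu}$) together with a barrier of the form $\eps(1+|x|^2)e^{Kt}$ to justify the comparison; and the technical heart you correctly identify --- the $C^{0,\nu}$ estimate for second derivatives of the constant-coefficient parabolic potential --- is asserted rather than proved, which is acceptable for a sketch of a textbook result but is where all the real work lives.
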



{\bf ACKNOWLEDGEMENTS.}
We would like to thank the anonymous Referee for their careful reading, that has led to various improvements in the final version of the paper.  

{\bf FUNDING.}
The research of the first named author has been partially supported by the MIUR-PRIN 2022 project “Non-Markovian dynamics and non-local equations”, no.\ 202277N5H9.
The research of the second named author
has been partially supported by the  ANR-22-CE40-0015-01 project (SDAIM). 

{\bf CONFLICT OF INTEREST.}
All authors declare that they have no conflicts of interest.

{\bf DATA AVAILABILITY.}
No data are associated with this article.

\bibliographystyle{plain}

\bibliography{../../../../../BIBLIO_FILE/biblio}

\end{document}